\begin{document}
\newtheorem{theorem}{Theorem}[section]
\newtheorem{corollary}[theorem]{Corollary}
\newtheorem{definition}[theorem]{Definition}
\newtheorem{conjecture}[theorem]{Conjecture}
\newtheorem{question}[theorem]{Question}
\newtheorem{lemma}[theorem]{Lemma}
\newtheorem{proposition}[theorem]{Proposition}
\newtheorem{example}[theorem]{Example}
\newenvironment{proof}{\noindent {\bf
Proof.}}{\rule{3mm}{3mm}\par\medskip}
\newcommand{\remark}{\medskip\par\noindent {\bf Remark.~~}}
\newcommand{\pp}{{\it p.}}
\newcommand{\de}{\em}

\newcommand{\JEC}{{\it Europ. J. Combinatorics},  }
\newcommand{\JCTB}{{\it J. Combin. Theory Ser. B.}, }
\newcommand{\JCT}{{\it J. Combin. Theory}, }
\newcommand{\JGT}{{\it J. Graph Theory}, }
\newcommand{\ComHung}{{\it Combinatorica}, }
\newcommand{\DM}{{\it Discrete Math.}, }
\newcommand{\ARS}{{\it Ars Combin.}, }
\newcommand{\SIAMDM}{{\it SIAM J. Discrete Math.}, }
\newcommand{\SIAMADM}{{\it SIAM J. Algebraic Discrete Methods}, }
\newcommand{\SIAMC}{{\it SIAM J. Comput.}, }
\newcommand{\ConAMS}{{\it Contemp. Math. AMS}, }
\newcommand{\TransAMS}{{\it Trans. Amer. Math. Soc.}, }
\newcommand{\AnDM}{{\it Ann. Discrete Math.}, }
\newcommand{\NBS}{{\it J. Res. Nat. Bur. Standards} {\rm B}, }
\newcommand{\ConNum}{{\it Congr. Numer.}, }
\newcommand{\CJM}{{\it Canad. J. Math.}, }
\newcommand{\JLMS}{{\it J. London Math. Soc.}, }
\newcommand{\PLMS}{{\it Proc. London Math. Soc.}, }
\newcommand{\PAMS}{{\it Proc. Amer. Math. Soc.}, }
\newcommand{\JCMCC}{{\it J. Combin. Math. Combin. Comput.}, }
\newcommand{\GC}{{\it Graphs Combin.}, }

\title{ A Variation of the Erd\H{o}s-S\'{o}s Conjecture in Bipartite Graphs
\thanks{This work is supported by the Joint NSFC-ISF Research Program (jointly funded by the National Natural Science Foundation of China and the Israel Science Foundation (No. 11561141001)), the National Natural Science Foundation of China (Nos.11531001 and 11271256),   Innovation Program of Shanghai Municipal Education Commission (No. 14ZZ016) and Specialized Research Fund for the Doctoral Program of Higher Education (No.20130073110075).
\newline \indent $^{\dagger}$Corresponding author:
Xiao-Dong Zhang (Email: xiaodong@sjtu.edu.cn)}}
\author{ {\bf Long-Tu Yuan $\cdot$ Xiao-Dong Zhang$^{\dagger}$}   \\
{\small School of Mathematical Sciences, MOE-LSC, SHL-MAC
}\\
{\small Shanghai Jiao Tong University} \\
{\small  800 Dongchuan Road, Shanghai, 200240, P.R. China}\\
{\small Email: yuanlongtu@sjtu.edu.cn, xiaodong@sjtu.edu.cn }}
\date{}
\maketitle
\begin{abstract}
  The Erd\H{o}s-S\'{o}s Conjecture states that every graph with average degree more than $k-2$ contains  all  trees of  order $k$ as subgraphs.  In this paper,  we consider a variation of the above conjecture: studying the maximum size of an  $(n,m)$-bipartite graph  which does not contain all  $(k,l)$-bipartite trees for given integers $n\ge m$ and $k\ge l$.  In particular, we determine that the maximum size of an $(n,m)$-bipartite graph  which does not  contain all $(n,m)$-bipartite trees as subgraphs  (or all $(k,2)$-bipartite trees as subgraphs, respectively). Furthermore, all these extremal graphs are characterized.
\end{abstract}

{{\bf Key words: Tree $\cdot$ Bipartite graph $\cdot$ Extremal graph $\cdot$ Erd\H{o}s-S\'{o}s Conjecture}}

{{\bf AMS Classifications:} 05C35, 05C05}.
\vskip 0.5cm

\section{Introduction}
The graphs considered in this paper are finite, undirected, and simple (no loops or multiple edges). Let $G=G[V; E]$ be a graph with vertex set $V$ and edge set $E$. The number of  vertices in $V$ is called {\it order} of $G$ and  the number of edges in $E$ is called {\it size} of $G$, denoted by   $e(G)$.
The {\it degree} of $v\in V$, the number of edges incident to $v$, is denoted by $d_{G}(v)$ and the set of neighbors of $v$ is denoted by $N_{G}(v)$.  Moreover, a vertex of degree one  is called a {\it pendent} vertex. If $u$ and $v$ in $V$ are adjacent, we say that $u$ {\it hits} $v$ and $v$ {\it hits} $u$. If $u$ and $v$ are not adjacent, we say that $u$ {\it misses} $v$ and $v$ {\it misses} $u$. The path with $n$ vertices is denoted by $P_{n}$, the star with $n$ vertices is denoted by $K_{1,n-1}$ ($K_{1,0}$ is an isolated vertex, and $K_{1,1}$ is an edge), the cycle with $n$ vertices is denoted by $C_{n}$, and the double star with $k_1+k_2$ vertices which is obtained from two stars $K_{1, k_1-1}$ and $K_{1, k_2-1}$ by joining an edge between two central vertices with degree $k_1-1,k_2-1$ is denoted by $S_{k_1,k_2}$.
  Let $G$ and $H$ be two vertex disjoint graphs. Denote by $G\cup H$ the vertex disjoint union of $G$ and $H$ and by $k\cdot G$ the vertex disjoint union of $k$ copies of a graph $G$.
  In addition, $\delta(G)$, $\Delta(G)$ and $avedeg(G)=\frac{2e(H)}{|V(H)|}$ are denoted by the minimum, maximum and average degree in $V(G)$, respectively. If $S\subseteq V(G)$, the induced subgraph of $G$ by $S$ is denoted by $G[S]$. Let $T$ be a tree of order $k$. If there exists an injection $f:V(T)\rightarrow V(G)$ such that $f(u)f(v)\in E(G)$ if $uv \in E(T)$ for $u, v\in V(T)$, we call $f$ an {\it embedding} of $T$ into $G$ and $G$ contains a copy of $T$ as a subgraph, denoted by $T\subseteq G$.
An {\it $(n,m)$-bipartite graph (or bigraph)} $G[U,V; E]$, or $B_{n,m}$, is a bipartite graph of order $m+n$ whose vertices can be divided into two disjoint sets $U$ and $V$  with $|U|=n$ and $|V|=m$ such that every edge joins one vertex in $U$ to another vertex in $V$.  Moreover, denote by $K_{n,m}$ the complete  $(n,m)$-bipartite graph.
  Furthermore, denote by  $\mathbb{ B}_{n,m}$ the set of all bipartite graphs $B_{n,m}$, and  $\mathbb{T}_{n,m}$ the set of all the $(n,m)$-bipartite trees $T_{n,m}$ with two partitions $|U|=n$ and $|V|=m$, respectively.

  We call a problem {\it a $Tur\acute{a}n$ type extremal problem} if  a family $\mathbb{L}$ of graphs  are given from  universe,  such as $G_{n}$ is a graph of order $n$, we try to maximize $e(G_n)$ of $G_n$  under the condition that $G_{n}$  does not contain  $L\in \mathbb{L}$. The maximum value is denoted by $ex(n,\mathbb{L})$. Similarly, for a given family $\mathbb{L}$ of bipartite graphs, the maximum value $e(B_{n,m})$ of $B_{n,m}$ under the condition that $B_{n,m}$ does not contain $L\in \mathbb{L}$ is denoted by $ex(n,m;\mathbb{L})$. Furthermore, if a bipartite graph $B_{n,m}$ with $ex(n,m;\mathbb{L})$ edges does not contain  $ L\in \mathbb{L}$, then this bipartite graph is called {\it an extremal bipartite graph} for $ \mathbb{L}$.
In 1959, Erd\H{o}s and Gallai \cite{erdHos1959maximal} proved the following theorem.
\begin{theorem}\label{erdod1959} Let $G$ be a graph with $avedeg(G)>k-2$. Then $G$ contains a path of order $k$ as a subgraph.
\end{theorem}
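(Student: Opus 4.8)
\medskip\noindent\textbf{Proof idea.} The plan is to start from the hypothesis in the equivalent form $e(G)>\tfrac{k-2}{2}\,|V(G)|$ and produce a copy of $P_k$. First I would pass to a subgraph of large minimum degree by a peeling argument (equivalently, this may be phrased as an induction on $|V(G)|$): as long as the current graph contains a vertex $v$ with $d(v)\le\tfrac{k-2}{2}$, delete $v$. Removing one vertex together with at most $\tfrac{k-2}{2}$ edges preserves the strict inequality $e(\cdot)>\tfrac{k-2}{2}\,|V(\cdot)|$, so this process terminates at a nonempty graph $H$ (nonemptiness holds because the density inequality excludes the empty graph) with $\delta(H)>\tfrac{k-2}{2}$ and still $e(H)>\tfrac{k-2}{2}\,|V(H)|$. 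It then suffices to find $P_k$ in $H$.

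The heart of the matter is a longest-path rotation argument inside a connected component $K$ of $H$. Let $P=v_0v_1\cdots v_\ell$ be a longest path in $K$. Maximality forces $N(v_0)\cup N(v_\ell)\subseteq V(P)$, hence $\ell\ge\delta(H)$; if $\ell\ge k-1$ then $P$ already contains $P_k$. Otherwise $\ell\le k-2$, so $\delta(H)>\tfrac{k-2}{2}\ge\tfrac{\ell}{2}$, and the two index sets $\{\,i:v_0v_{i+1}\in E\,\}$ and $\{\,i:v_iv_\ell\in E\,\}$, both contained in $\{0,1,\dots,\ell-1\}$, have sizes summing to at least $2\delta(H)>\ell$ and therefore meet in some index $i$. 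This $i$ produces the cycle $v_iv_{i-1}\cdots v_0v_{i+1}v_{i+2}\cdots v_\ell v_i$ on the whole vertex set $V(P)$. If $K$ had a vertex outside this cycle, connectedness would provide an edge leaving it, and rerouting along the cycle would yield a path on $\ell+2$ vertices, contradicting the maximality of $P$. Therefore $V(K)=V(P)$, so $|V(K)|=\ell+1\le k-1$.

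To finish, suppose $H$ contains no $P_k$. Then every component $K$ satisfies $|V(K)|\le k-1$, hence $e(K)\le\binom{|V(K)|}{2}\le\tfrac{k-2}{2}\,|V(K)|$; summing over the components contradicts $e(H)>\tfrac{k-2}{2}\,|V(H)|$. Hence $H$, and therefore $G$, contains $P_k$. I expect the delicate step to be the rotation: one must check that the pigeonhole on the two neighbor-index sets is justified precisely by combining $\delta(H)>\tfrac{k-2}{2}$ (from the peeling) with $\ell\le k-2$ (the case where $P$ does not already give $P_k$), and then that this forces exactly the component bound $|V(K)|\le k-1$ on which the final edge count relies. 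The parity of $k$---whether $\tfrac{k-2}{2}$ is an integer---enters only in how one reads off $\delta(H)$ and causes no real difficulty.
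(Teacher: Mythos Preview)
Your argument is correct and is essentially the classical Erd\H{o}s--Gallai proof: peel to a subgraph of minimum degree exceeding $\tfrac{k-2}{2}$, take a longest path in a component, use the pigeonhole on the two endpoint neighborhoods to close a Hamiltonian cycle of that component, and conclude via the component-size bound. The steps are all sound; in particular the index-set intersection is justified exactly as you say, and the final edge count over components gives the contradiction.

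However, note that the paper does not actually prove this statement. Theorem~\ref{erdod1959} is quoted from \cite{erdHos1959maximal} purely as background motivation for the Erd\H{o}s--S\'{o}s Conjecture and for the bipartite variants the paper goes on to study; no proof is supplied or needed in the paper itself. So there is nothing to compare your approach against: you have reproduced the standard proof of a cited result, not an argument original to this paper.
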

Based on the above theorem and related results,  Erd\H{o}s and S\'{o}s   proposed the following  well known conjecture (for example, see \cite{erdos1965}).
\begin{conjecture}\label{con} Let $G$ be a graph with $avedeg(G)>k-2$.
Then $G$ contains all trees of order $k$.
Furthermore,
$$ex(n, \mathbb{T}_k)=\lfloor\frac{(k-2)n}{2}\rfloor,$$
where $\mathbb{T}_k$ is the set of all trees of order $k$.
\end{conjecture}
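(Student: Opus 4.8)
\medskip\noindent\textbf{Proof strategy.} Since the displayed statement is the Erd\H{o}s--S\'{o}s conjecture itself, what follows is a strategy together with an identification of the essential obstacle, rather than a proof. The conjecture has two parts and the second is the real content: the inequality $ex(n,\mathbb{T}_k)\ge\lfloor(k-2)n/2\rfloor$ is witnessed by an (almost) disjoint union of cliques $K_{k-1}$, since a connected graph on at most $k-1$ vertices contains no tree of order $k$, and this construction has $\lfloor n/(k-1)\rfloor\binom{k-1}{2}$ edges, equal to $\lfloor(k-2)n/2\rfloor$ when $(k-1)\mid n$ and of the right order otherwise. So everything reduces to the embedding statement: every $G$ with $avedeg(G)>k-2$ contains every tree $T$ on $k$ vertices; this would also yield the matching upper bound $ex(n,\mathbb{T}_k)\le\lfloor(k-2)n/2\rfloor$.

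For the embedding I would first reduce to a host graph of large minimum degree. A standard deletion argument --- repeatedly remove a vertex of degree at most $(k-2)/2$, which never drops the average degree below $k-2$ --- produces a nonempty subgraph $G'$ with $\delta(G')>(k-2)/2$. Then fix a root of $T$ and a breadth-first order $v_1,\dots,v_k$ of $V(T)$, so that every $v_i$ with $i\ge2$ has a unique neighbour $v_{j(i)}$ with $j(i)<i$, and build an embedding $f$ greedily: at step $i$ choose $f(v_i)$ to be a neighbour of $f(v_{j(i)})$ in $G'$ avoiding $f(v_1),\dots,f(v_{i-1})$. Since at most $i-2\le k-2$ vertices are ever blocked, this succeeds the moment $\delta(G')\ge k-1$ --- but the reduction only delivers $\delta(G')$ roughly $k/2$, a factor of two short of what the naive argument needs.

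Closing this factor-of-two gap is the heart of the matter, and it is exactly why Conjecture~\ref{con} remains open in general. A real attempt must go beyond the greedy embedding along one of the known lines: pass to a block, so $G'$ may be assumed $2$-connected, and route long sub-paths of $T$ through a long cycle or a large fan (this underlies the path case, Theorem~\ref{erdod1959}, and the known spider and small-diameter cases); or split $T$ at a centroid edge into two subtrees of order about $k/2$ and embed them on the two sides of a dense bipartite-type structure extracted from $G'$; or, for $k$ large, settle for the asymptotic version. In each approach the genuine obstacle is to place the few high-degree vertices of $T$, which demand images of correspondingly high degree in $G'$, and the many leaves, which only need an unused neighbour, \emph{simultaneously}, against a host graph whose sole guaranteed parameter is its average degree --- and I do not expect a short argument.
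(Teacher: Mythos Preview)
The statement you were asked to address is Conjecture~\ref{con}, the Erd\H{o}s--S\'{o}s conjecture, and the paper does \emph{not} prove it: it is stated as an open conjecture in the introduction, with only the remark that Ajtai, Koml\'{o}s, Simonovits and Szemer\'{e}di have announced a proof for sufficiently large $k$. You correctly recognise this and offer a strategic overview rather than a proof, so there is no ``paper's own proof'' to compare against and your decision not to claim one is exactly right.

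A small comment on the content of your sketch: the lower-bound construction by disjoint copies of $K_{k-1}$ does not in general reach $\lfloor(k-2)n/2\rfloor$ edges when $(k-1)\nmid n$, as you yourself note; placing a clique on the leftover vertices helps, but the exact equality $ex(n,\mathbb{T}_k)=\lfloor(k-2)n/2\rfloor$ for \emph{every} $n$ is itself part of the conjecture rather than an established fact. Your description of the minimum-degree reduction and the factor-of-two gap in the greedy embedding is accurate and is indeed the standard explanation of where the difficulty lies.
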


In \cite{Ajtai1, Ajtai2,Ajtai3}, Ajtai, Koml\'{o}s, Simonovits and  Szemer\'{e}di proved that the Erd\H{o}s-S\'{o}s Conjecture is true for sufficiently large $k$. Fan \cite{fan2013erdHos} proved that the Erd\H{o}s-S\'{o}s Conjecture holds for the spiders of large size. More results on this conjecture can be referred to \cite{balasubramanian2007erdos,brandt1996erdHos,dobson2002constructing,eaton2010erdos,mclennan2005erdHos,sacle1997erdHos,sidorenko1989asymptotic,tiner2010erdos,wozniak1996erdos,Yuan2016erdHos,Zhou1984erdHos}. On the extremal problems on complete bipartite graph, K\H{o}v\'{a}ri, S\'{o}s and Tur\'{a}n \cite{kovari1954} proved the following result:
   \begin{theorem}\cite{kovari1954}\label{kovari}
   The maximum size of a graph containing no complete bipartite graph $K_{a,b}$ is at most
   $\frac{1}{2}\sqrt[a]{b-1}n^{2-1/a}+\frac{1}{2}(a-1)n$.
   \end{theorem}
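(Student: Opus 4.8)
The plan is to prove the bound by a double count of ``$a$-stars'' together with convexity, which is the classical argument of K\H{o}v\'{a}ri, S\'{o}s and Tur\'{a}n. Let $G$ be a graph on $n$ vertices with no $K_{a,b}$, write $d(v)=d_G(v)$, and let $\bar d=\frac{2e(G)}{n}$ be its average degree. First I would dispose of the trivial regime: if $\bar d<a-1$ then $e(G)<\frac12(a-1)n$ and the claimed inequality holds at once, so from here on I assume $\bar d\ge a-1$. Call a pair $(v,A)$ an \emph{$a$-star centred at $v$} if $A$ is an $a$-element subset of $N_G(v)$. Counting $a$-stars by their centre gives $\sum_{v\in V(G)}\binom{d(v)}{a}$ of them; counting them by their leaf set $A\in\binom{V(G)}{a}$ gives $\sum_{A}\bigl|\bigcap_{u\in A}N_G(u)\bigr|$, because an $a$-star with leaf set $A$ is precisely a common neighbour of all vertices of $A$. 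Since $G$ is $K_{a,b}$-free, no $a$-set can have $b$ or more common neighbours (otherwise the $a$-set together with $b$ of them would yield a copy of $K_{a,b}$ in $G$), so each summand on the right-hand side is at most $b-1$. Hence
$$\sum_{v\in V(G)}\binom{d(v)}{a}\;=\;\sum_{A\in\binom{V(G)}{a}}\Bigl|\bigcap_{u\in A}N_G(u)\Bigr|\;\le\;(b-1)\binom{n}{a}.$$

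Next I would bring in convexity. Interpreting $\binom{x}{a}=\frac{1}{a!}\prod_{i=0}^{a-1}(x-i)$ for real $x\ge a-1$, this is a convex function that agrees with the usual binomial coefficient at the integers (it vanishes at $0,1,\dots,a-1$, exactly as $\binom{d}{a}$ does), so Jensen's inequality gives $\sum_{v}\binom{d(v)}{a}\ge n\binom{\bar d}{a}$. Combining this with the display above and using the elementary bounds $\binom{\bar d}{a}\ge\frac{(\bar d-a+1)^a}{a!}$ (valid because $\bar d\ge a-1$, so every factor $\bar d-i$ is at least $\bar d-a+1\ge 0$) and $\binom{n}{a}\le\frac{n^a}{a!}$, I obtain $(\bar d-a+1)^a\le (b-1)\,n^{a-1}$, hence $\bar d-a+1\le (b-1)^{1/a}n^{1-1/a}$. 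Since $e(G)=\frac12 n\bar d$, this rearranges to $e(G)\le\frac12(b-1)^{1/a}n^{2-1/a}+\frac12(a-1)n$, which is the asserted bound.

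The argument is essentially all routine estimation once the double count is set up; the one point that genuinely needs care — and the reason I would split off the case $\bar d<a-1$ at the very start — is that the real-variable extension $\binom{x}{a}$ is only convex on $[a-1,\infty)$, so Jensen's inequality may be applied only after one knows that $\bar d$ lies in that range. (One could instead avoid real-argument binomials altogether by noting that the \emph{integer} sequence $d\mapsto\binom{d}{a}$ is convex, since its first difference is $\binom{d}{a-1}$, and applying the discrete Jensen inequality followed by a chord estimate; this still forces the same case distinction.) No other step presents an obstacle.
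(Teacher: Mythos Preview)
The paper does not prove this theorem at all: it is quoted as a background result of K\H{o}v\'{a}ri, S\'{o}s and Tur\'{a}n with the citation \cite{kovari1954}, and no argument is supplied. Your write-up is exactly the classical double-counting proof from that reference, and it is essentially correct.

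One small correction to your discussion of the convexity step. Assuming $\bar d\ge a-1$ is not by itself what licenses Jensen's inequality: Jensen requires convexity on an interval containing \emph{all} the data points $d(v)$, and some of these may well lie below $a-1$ even when their average does not. The clean fix is to work with the function $h(x)$ that equals the polynomial $\binom{x}{a}$ for $x\ge a-1$ and equals $0$ for $0\le x<a-1$. This $h$ is continuous (the polynomial vanishes at $a-1$), convex on all of $[0,\infty)$ (the right derivative at $a-1$ is $1/a>0$), and agrees with the integer binomial $\binom{d}{a}$ at every nonnegative integer $d$. Jensen then applies with no restriction on the individual degrees, and the hypothesis $\bar d\ge a-1$ is used only to identify $h(\bar d)$ with $\binom{\bar d}{a}$. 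With this adjustment your argument is complete.
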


  F\"{u}redi and Simonovits \cite{Furedi2013} written a survey on extremal graph theory focusing on the cases when one of the excluded graphs is bipartite.
For example,  Gy\H{o}ri \cite{gyori1995}  proved that $ex(n,m;C_{6})<2n+\frac{m^2}{2}$.
Gy\'{a}rf\'{a}s, Rousseau and Schelp \cite{gyarfas1984} proved that the following theorem.
\begin{theorem} \cite{gyarfas1984}
\label{P2l}
Let $n\geq m$. Then \\
$$ ex(n,m;P_{2l})=\left\{\begin{array}{ll}
 nm, &\mbox {for} \ \ m\leq l-1;\\
 (l-1)n, & \mbox{for} \ \ l-1 <m<2(l-1);\\
 (l-1)(n+m-2l+2), & \mbox{for}\ \  m\geq 2(l-1).\end{array}\right.
 $$
 Furthermore, (1). If $m\leq l-1$, then all extremal graphs are $K_{n,m}$.\\
 (2). If $l-1<m<2(l-1)$, then all extremal graphs are $K_{l-1,n}\cup (m-l+1)\cdot K_1$.\\
 (3). If $m\geq 2(l-1)$, then all extremal graphs are $K_{l-1,m-l+1}\cup K_{l-1,n-l+1}$; or $K_{l-1,i}\cup K_{l-1,n-i}$ for $i=0,1,\ldots,\lfloor\frac{n}{2}\rfloor$, when $m=2(l-1)$.
\end{theorem}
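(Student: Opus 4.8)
The plan is to prove the lower bound by exhibiting the claimed extremal graphs and the upper bound (together with the equality cases) by a reduction to connected graphs followed by a short optimization. For the lower bound, observe that each candidate graph is a vertex-disjoint union of complete bipartite graphs in which every component has a colour class of size at most $l-1$: for $m\le l-1$ it is $K_{n,m}$ itself; for $l-1<m<2(l-1)$ it is $K_{l-1,n}$ together with $m-l+1$ isolated vertices; for $m\ge 2(l-1)$ it is $K_{l-1,m-l+1}\cup K_{l-1,n-l+1}$, the two pieces being oriented so that the ambient colour classes have sizes exactly $n$ and $m$ (and at the boundary $m=2(l-1)$ one also gets the family $K_{l-1,i}\cup K_{l-1,n-i}$). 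Since every path on $2l$ vertices in a bipartite graph uses $l$ vertices from each colour class, a connected bipartite graph with a colour class of size $\le l-1$ cannot contain $P_{2l}$; hence all these graphs are $P_{2l}$-free, and a direct edge count gives the asserted values, so $ex(n,m;P_{2l})$ is at least the claimed number.

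For the upper bound, let $B=B[U,V;E]$ be $P_{2l}$-free with $|U|=n\ge m=|V|$. First I would record the elementary fact that a bipartite graph of minimum degree $\ge l$ contains $P_{2l}$: in a longest path, the two end-vertices have all their neighbours on the path and at odd distance from the end, so the path already has at least $2l$ vertices; consequently $\delta(B)\le l-1$. The heart of the proof is a connected version: \emph{if $C$ is a connected bipartite graph with colour classes of sizes $p\le q$ and $C$ contains no $P_{2l}$, then $e(C)\le (l-1)q$, with equality only when $p=l-1$ and $C=K_{l-1,q}$.} When $p\le l-1$ this is immediate from $e(C)\le pq\le (l-1)q$. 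The substantive case is $p\ge l$, which I would handle by taking a longest path $Q$ (necessarily on at most $2l-1$ vertices) and running a P\'osa-type rotation/extension argument: were too many vertices of the larger colour class to have degree $\ge l$, some rotation of $Q$ would have an end-vertex with a neighbour off $Q$, producing a longer path or a long cycle through $Q$ and hence a $P_{2l}$; the finitely many configurations with $q$ small are checked by hand.

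Granting this lemma, I would finish as follows. Write $B$ as the disjoint union of its components $C_i$, where $C_i$ meets $U$ in $u_i$ vertices and $V$ in $v_i$ vertices, so $\sum u_i=n$ and $\sum v_i=m$. Combining $e(C_i)\le u_iv_i$ with the connected lemma gives $e(C_i)\le \min(p_i,l-1)\,q_i$, where $p_i=\min(u_i,v_i)$ and $q_i=\max(u_i,v_i)$; thus each component contributes at most $l-1$ edges for every vertex of its larger colour class, and only through components whose smaller colour class is nonempty. Maximizing $\sum_i\min(p_i,l-1)\,q_i$ subject to $\sum u_i=n$, $\sum v_i=m$ and $n\ge m$ is a finite optimization whose solution splits according to whether $m<l$, $l\le m<2(l-1)$, or $m\ge 2(l-1)$: in the first regime all of $B$ fits into one complete bipartite graph; in the second there is room for only one colour class of size $l-1$, and it is best placed inside $V$ so that the large side is all of $U$, yielding $(l-1)n$; in the third one can afford two such classes, one inside $U$ and one inside $V$, yielding $(l-1)(n+m-2l+2)$, the two optima coinciding exactly at $m=2(l-1)$. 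The equality cases then follow by tracing back: the connected lemma forces every nontrivial component to be some $K_{l-1,\ast}$, and the optimization fixes the core sizes and the distribution of the remaining vertices, giving precisely the listed extremal graphs (including the one-parameter family at $m=2(l-1)$).

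The step I expect to be the main obstacle is the connected lemma in the range where both colour classes have size at least $l$: there the naive ``delete a vertex of degree $\le l-1$ and induct'' loses a factor $l-1$ at each step and does not close, so one genuinely needs the rotation/extension analysis of a longest path (with careful bookkeeping of which rotations are available) to pin down both the bound $(l-1)q$ and the uniqueness of $K_{l-1,q}$. A secondary nuisance is the boundary case $m=2(l-1)$, where the equality discussion must additionally reconstruct the family $K_{l-1,i}\cup K_{l-1,n-i}$.
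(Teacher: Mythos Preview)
The paper does not prove this theorem. Theorem~\ref{P2l} is quoted from Gy\'arf\'as, Rousseau and Schelp \cite{gyarfas1984} and is used in this paper only as a black box (in the proofs of Lemmas~\ref{small}, \ref{4.2} and Theorem~\ref{main3}); no argument for it appears anywhere in the text. So there is no ``paper's own proof'' to compare your proposal against.

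That said, a brief comment on your outline: the reduction to a connected lemma of the shape ``a connected $P_{2l}$-free bipartite graph with colour classes of sizes $p\le q$ has at most $(l-1)q$ edges, with equality only for $K_{l-1,q}$'' is indeed the natural route, and it is essentially what the original Gy\'arf\'as--Rousseau--Schelp argument does. Your identification of the hard step is accurate: when both colour classes have size at least $l$, the naive vertex-deletion induction does not close, and one needs a genuine longest-path/rotation analysis to get both the bound and the uniqueness. Your proposal sketches this at the level of a plan rather than a proof; the actual execution (in \cite{gyarfas1984}) requires a somewhat careful case analysis of how the endpoints of a longest path can be extended or rotated, and the equality characterization at $m=2(l-1)$ does need separate attention, as you anticipate. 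If your goal is to reproduce the result independently, be aware that the ``finitely many configurations with $q$ small are checked by hand'' line hides real work.
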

Moreover, they \cite{gyarfas1984}  also determined $ex(n,m;P_{2l+1})$ and the extremal graphs.  The related results about the extremal graphs with focusing on the case when one of the excluded graphs is bipartite can be referred to \cite{Balbuena2007,Furedi2006,Sarkozy1995}. Motivated by Erd\H{o}s-S\'{o}s Conjecture and the above results, in this paper, we propose the following problem.
\begin{question}
 Determine $ex(n,m;\mathbb{T}_{k,l})$ and characterize all extremal graphs, where  $\mathbb{T}_{k,l}$  is set of all $(k,l)$-bipartite trees of order $k+l$.
 \end{question}

 The main results in this paper are stated as follows.

\begin{theorem}
\label{main1}
  Let $n\geq m$. Then $$ex(n,m;\mathbb{T}_{n,m})= (n-1)m.$$
 Furthermore,  (1). If $n=m$, then all extremal graphs for $\mathbb{T}_{n,n}$ are $(n,n)$-bipartite graphs $G[U,V; E]$ such that the degree of each vertex in $U$ (or $V$) is $n-1$.

      (2). If $n=m+1$, then all extremal graphs for $\mathbb{T}_{n,m}$ are $(n,m)$-bipartite graphs $G[U,V; E]$ such that the degree of each vertex in  $V$ is $n-1$, or  $      d(v_{1})=1$ and $d(v_{2})=\ldots =d(v_{m})=n$.

       (3). If $n> m+1$,  then all extremal graphs for $\mathbb{T}_{n,m}$ are $(n,m)$-bipartite graphs $G[U,V; E]$ such that the degree of each vertex in $V$ is $n-1$.
   \end{theorem}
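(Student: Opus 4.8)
The plan is to split the proof into three parts: the lower bound $ex(n,m;\mathbb{T}_{n,m})\ge (n-1)m$, the matching upper bound, and the characterization of the extremal graphs. For the lower bound I would exhibit $(n,m)$-bipartite graphs with exactly $(n-1)m$ edges that fail to contain some member of $\mathbb{T}_{n,m}$. The basic one, $G_{0}$, has every vertex of $V$ of degree $n-1$; since $m-1\le n$, the set $\mathbb{T}_{n,m}$ contains a tree $T^{\ast}$ having a vertex $b^{\ast}\in B$ adjacent to all of $A$ (the other $m-1$ vertices of $B$ hung as pendants on $A$), and $T^{\ast}\not\subseteq G_{0}$ because the image of $b^{\ast}$ would need degree $\ge n$. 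When $n=m+1$ I would also use the graph with $d(v_{1})=1$ and $d(v_{2})=\cdots=d(v_{m})=n$; it has $(n-1)m$ edges and misses every $(m+1,m)$-bipartite tree in which each vertex of $B$ has degree $\ge 2$ (for instance the tree obtained by subdividing every edge of a tree on the $n$ vertices of $A$ exactly once), because $v_{1}$ can only receive a vertex of degree $\le 1$. For $n=m$ the bound follows from $G_{0}$, or from its transpose.

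For the upper bound, write a graph $G\in\mathbb{B}_{n,m}$ with $e(G)\ge(n-1)m+1$ as $K_{n,m}$ with a set $F$ of at most $m-1$ edges deleted; any embedding of a tree $T\in\mathbb{T}_{n,m}$ into $G$ is then forced to be a bijection respecting the two sides (up to interchanging the sides when $n=m$). The key counting observations are that at most $m-1$ vertices of $U$ and at most $m-1$ of $V$ are incident to $F$, so each side of $G$ has at least one vertex complete to the other side; that $T$ has a leaf in $A$ because $n\ge m$; and that if $T$ has no leaf in $B$ then every vertex of $B$ has degree $\ge 2$, which forces $n\ge m+1$. With these in hand I would build the embedding greedily along a rooted order of $T$, but choose the order adaptively so that the few non-complete vertices of $G$ are used as images of early-embedded, low-degree vertices of $T$, while the complete vertices of $G$ are reserved for the vertices of $T$ embedded last. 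Equivalently, one may first choose the bijection $B\to V$ routing the $F$-heavy vertices of $V$ onto the lowest-degree vertices of $B$, then complete it to a bijection $A\to U$ avoiding $F$ by Hall's theorem; a union bound over the at most $m-1$ edges of $F$ already yields such a bijection when $n$ is large compared with $m$, and the adaptive greedy handles the remaining range.

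The delicate point — and where I expect the real work — is the borderline case $e(G)=(n-1)m+1$, that is $|F|=m-1$: here deleting a complete vertex of $G$ drops the edge count below the inductive threshold, so a plain induction on $m$ (or on $n+m$) that removes a leaf of $T$ together with a matching vertex of $G$ breaks down, and one has to carry a strengthened, rooted induction hypothesis — the embedding can be required to send a prescribed leaf of $T$ to a prescribed vertex of $G$ — and verify that the neighbour in $T$ of the deleted leaf can always be routed to a neighbour in $G$ of the deleted vertex, the smallness of $F$ leaving just enough room. Finally, the characterization is obtained by running the upper-bound analysis at the boundary $e(G)=(n-1)m$, so $|F|=m$: one shows that unless $G$ has one of the stated shapes it still contains every tree, the dichotomy turning on whether the missing tree has a leaf in $B$ — in which case $\sum_{v\in V}d(v)=(n-1)m$ forces every vertex of $V$ (or, when $n=m$, of $U$) to have degree exactly $n-1$ — or has no leaf in $B$, which as above forces $n=m+1$ and the graph $d(v_{1})=1$, $d(v_{2})=\cdots=d(v_{m})=n$. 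The cases $n=m$, $n=m+1$ and $n>m+1$ must be separated throughout, since the $B$-leafless trees responsible for the extra extremal family exist precisely when $n\ge m+1$.
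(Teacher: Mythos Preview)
Your high-level decomposition into lower bound, upper bound, and characterization is sound, and your lower-bound constructions are correct. The embedding step for the upper bound, however, is underspecified in a way that conceals a real difficulty. Once you fix a bijection $f_B\colon B\to V$, the auxiliary graph on $A\times U$ in which $a\sim u$ means ``$u$ is a legal image for $a$'' has its non-edges controlled jointly by $F$ and the tree: a deleted edge $(u,v)\in F$ forbids every pair $(a,u)$ with $a\in N_T(f_B^{-1}(v))$. Your heuristic ``send $F$-heavy vertices of $V$ to low-degree vertices of $B$'' does not guarantee Hall's condition. For instance, if $n=m$ and $F$ consists of $m-1$ edges all incident to a single $u_1\in U$, then $u_1$ has a unique neighbour $v^\ast$ in $G$, and $u_1$ is a legal image only for a vertex $a\in A$ all of whose $T$-neighbours are sent to $v^\ast$---which forces $a$ to be a leaf of $T$ \emph{and} forces $f_B$ to send the $B$-neighbour of that particular leaf to $v^\ast$. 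So the choice of $f_B$ must already anticipate which leaf of $A$ will absorb $u_1$; this coupling between the two bijections is exactly what your ``adaptive greedy'' would have to produce, and as stated it is not a proof. The rooted single-leaf induction you float for the borderline is closer to workable, but you have not shown that the prescribed image can always be realized after one deletion.

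The paper sidesteps all of this with a different inductive device: it deletes \emph{two} vertices from $T$ at once---a leaf $y_0$ together with its neighbour $x$---and reroutes the remaining neighbours $y_1,\dots,y_t$ of $x$ to another vertex $x'$ in the same partite class (lying in the component of $T-\{x,y_0\}$ that contains $y_1$), obtaining a tree $T'\in\mathbb{T}_{n-1,m-1}$. On the graph side it deletes a vertex of full degree $n$ on one side (such a vertex exists whenever $e\ge(n-1)m$ and we are not already in an extremal configuration) together with a vertex of small degree on the other, chosen so that $e(B')>(n-2)(m-1)$ strictly; the strict inequality rules out the exceptional shapes at the next level, so induction applies cleanly. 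The embedding of $T'$ into $B'$ is then extended by sending $x$ to the deleted full-degree vertex, which automatically hits all of $f'(y_1),\dots,f'(y_t)$ and renders the rerouting harmless; $y_0$ goes to the other deleted vertex. The extremal characterization falls out of the same case analysis (four cases for $n>m$, a separate lemma for $n=m$). This two-vertex contraction is the idea your sketch is missing.
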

   Moreover, for small $k$ and $l$ we have the following results.
 \begin{theorem}
\label{main2}
Let $n\geq m\geq2$ and $n\geq k$.

  (1). If $k=2$, then   $ex(n,m;\mathbb{T}_{k,2})=n+m-2$.

  (2). If $k\geq3$ and  $m=2$, then
  $$ex(n,2;\mathbb{T}_{k,2})=\left\{\begin{array}{ll}
   n+\lceil\frac{k}{2}\rceil-1, & \mbox{for}\ \ n\geq\lfloor\frac{3k}{2}\rfloor-1;\\
  2(k-1),& \mbox{for} \ \  n\leq\lfloor\frac{3k}{2}\rfloor-1.\end{array}\right.$$

(3). If  $ 3\leq m\leq k$, then
$$ ex(n,m;\mathbb{T}_{k,2})=\left\{\begin{array}{ll}
(m-2)(k-1)+n, & \mbox{for }\ \  n\geq 2k-1;\\
  m(k-1), & \mbox{for } \ \ n\leq2k-2.\end{array}\right.$$

(4). If $ m\geq k+1\ge 4$, then
  $$ex(n,m;\mathbb{T}_{k,2})=\left\{\begin{array}{ll}
  (k-1)(m-1)+n-m+1, & \mbox{for}\ \  n-m\geq k-1;\\
 (k-1)m, & \mbox{for } \ \ n-m\leq k-2.\end{array}\right.$$
 \end{theorem}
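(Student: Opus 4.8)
The plan is to describe the $(k,2)$-bipartite trees explicitly, reduce Theorem~\ref{main2} to a single-tree extremal problem, prove a Hall-type embedding criterion that turns ``$B_{n,m}$ contains every $(k,2)$-tree'' into a condition on degrees, and then read off both the extremal constructions and the matching upper bounds from that criterion.

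Since a $(k,2)$-bipartite tree has $k+1$ edges, if its size-$2$ part is $\{v_1,v_2\}$ and its size-$k$ part is $W$, then counting edge-ends shows that exactly one vertex of $W$ is adjacent to both $v_1$ and $v_2$ and every other vertex of $W$ is adjacent to exactly one of them; hence the $(k,2)$-trees are $T_a$ ($0\le a\le k-1$) with $d_{T_a}(v_1)=a+1$ and $d_{T_a}(v_2)=k-a$, and $T_a\cong T_{k-1-a}$. As a bipartite graph fails to contain $\mathbb{T}_{k,2}$ exactly when it misses some $T_a$, we have $ex(n,m;\mathbb{T}_{k,2})=\max_{0\le a\le\lfloor(k-1)/2\rfloor}ex(n,m;\{T_a\})$, and Theorem~\ref{main2} will follow by evaluating each term and maximizing; for $k=2$ this is $ex(n,m;\{P_4\})$, which is immediate from Theorem~\ref{P2l}. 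The tool I would establish is: for $0\le a\le\lfloor(k-1)/2\rfloor$, a bipartite graph $B$ contains $T_a$ with $\{v_1,v_2\}$ mapped into a fixed part $P$ \emph{iff} there exist distinct $y_1,y_2\in P$ with $d(y_1)\ge a+1$, $d(y_2)\ge k-a$, $N(y_1)\cap N(y_2)\ne\emptyset$ and $|N(y_1)\cup N(y_2)|\ge k$. Necessity is clear; for sufficiency one picks the shared image $x\in N(y_1)\cap N(y_2)$ and greedily assigns $a$ more $y_1$-neighbours and $k-1-a$ more $y_2$-neighbours, using private neighbours first and borrowing from $N(y_1)\cap N(y_2)$ only when forced, and a short case check on $a$ against the sizes of the private parts shows the four hypotheses suffice. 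Consequently $B_{n,m}$ (with $n\ge m$) contains all $(k,2)$-trees iff for every $a\le\lfloor(k-1)/2\rfloor$ such a pair exists in $V$, or in $U$ when $m\ge k$; in particular $T_0\subseteq B$ iff $B$ has an edge $zw$ with $d(z)\ge k$ and $d(w)\ge 2$.

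For the lower bounds I would give, in each regime, a graph missing a single $T_a$. A bipartite graph in which every vertex of $V$ has degree exactly $k-1$ and the other ends are spread over $U$ (realizable since $m\le n$) has maximum degree $\le k-1$, hence misses $T_0$, and has $(k-1)m$ edges, which is the ``small $n$'' value. For $m=2$ and large $n$, setting $d(v_2)=n$ and $d(v_1)=\lceil k/2\rceil-1$ with $N(v_1)\subseteq N(v_2)$ misses $T_{a^*}$ with $a^*=\lfloor(k-1)/2\rfloor$ and has $n+\lceil k/2\rceil-1$ edges. For $m\ge 3$ and large $n$, taking one vertex $z\in V$ joined to $d(z)$ pendant vertices of $U$ and placing a maximum-degree-$(k-1)$ graph with as many edges as possible on the remaining $n-d(z)$ vertices of $U$ and the $m-1$ vertices of $V\setminus\{z\}$ --- namely $K_{k-1,m-1}$ with $d(z)=n-k+1$ when $m\le k$, and a $(k-1)$-regular graph with $d(z)=n-m+1$ when $m\ge k+1$ --- misses $T_0$ by the criterion above (the only vertex of degree $\ge k$ is $z$, whose neighbours all have degree $1$), and has $n+(m-2)(k-1)$, respectively $n+(k-2)(m-1)$, edges, provided the stated threshold on $n$ holds so that $d(z)\ge k$.

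The core is the matching upper bound: fix $a$ with $T_a\not\subseteq B$ and bound $e(B)$. For $a=0$ the criterion forces the set $X$ of vertices of degree $\ge k$ to have $N(X)$ consisting only of pendant vertices $Y$, each private to one member of $X$, so $e(B)=|Y|+e(B-X-Y)$ with $B-X-Y$ of maximum degree $\le k-1$; optimizing the sizes $|X\cap U|$, $|X\cap V|$, $|Y|$ against $n$ and $m$ --- and using $m\le k$ versus $m\ge k+1$ to decide whether the dense remainder is a complete bipartite graph or a regular one --- yields exactly the right-hand sides above (or $(k-1)m$ when $X=\emptyset$). For $a\ge 1$ the negated criterion says that, within each relevant part, no two vertices of degree $\ge a+1$ have both a common neighbour and union of neighbourhoods of size $\ge k$; one then bounds $e(B)$ and checks it is dominated by the $a=0$ bound, except for $m=2$, where the $a=a^*$ bound $n+a^*$ takes over precisely for $n\ge\lfloor3k/2\rfloor-1$. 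I expect the delicate point to be the subcase in which a part contains several vertices of moderate-to-large degree whose neighbourhoods pairwise have union of size $\le k-1$: this should force these neighbourhoods to nearly coincide, again giving a good bound, but the bookkeeping (over $a$, over the two parts, and over $n$ small versus large) is where the proof is longest. Taking the maximum over $a$ of all these bounds gives $ex(n,m;\mathbb{T}_{k,2})$, and tracing the equality cases through the embedding lemma identifies the extremal graphs.
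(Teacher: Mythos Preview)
Your approach is correct in outline but genuinely different from the paper's. The paper does not decompose $\mathbb{T}_{k,2}$ into the individual trees $T_a$; instead it works directly with the assumption ``$B_{n,m}$ avoids some tree in $\mathbb{T}_{k,2}$'' and uses a single structural fact (Proposition~\ref{prop}: if $B$ misses some $(k,2)$-tree then any two vertices in the same part with degrees $>\lceil k/2\rceil-1$ and $>k-1$ have disjoint neighbourhoods) as the engine for a case analysis organised by the value of $m$ (Lemmas~\ref{main2,1}--\ref{main2,4}, with Lemma~\ref{n,m} as an auxiliary regularity lemma). Your embedding criterion is strictly sharper --- it includes the union condition $|N(y_1)\cup N(y_2)|\ge k$, which the paper never isolates --- and your reduction $ex(n,m;\mathbb{T}_{k,2})=\max_a ex(n,m;\{T_a\})$ is a cleaner way to see why different extremal configurations (e.g.\ the $K_{k-1,m-1}\cup K_{n-k+1,1}$ construction versus the $d(v_1)=d(v_2)=\lceil k/2\rceil-1$, $d(v_3)=n$ construction for odd $k$) coexist. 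On the other hand, the paper's organisation avoids having to prove the separate inequalities $ex(n,m;\{T_a\})\le ex(n,m;\{T_0\})$ for every $a\ge1$ and $m\ge3$; this is precisely the ``delicate bookkeeping'' you flag, and in your framework it carries real weight rather than being a formality.

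Two small points to clean up. First, your stated negation for $a\ge1$ (``no two vertices of degree $\ge a+1$ have both a common neighbour and union of neighbourhoods of size $\ge k$'') is the negation of a symmetric condition, whereas your criterion is asymmetric ($d(y_1)\ge a+1$, $d(y_2)\ge k-a$); you need to carry the asymmetry through the upper-bound argument. Second, for $m\ge k$ the two-vertex side of $T_a$ can land in either $U$ or $V$, so ``$T_a\not\subseteq B$'' means your criterion fails in \emph{both} parts simultaneously; your $a=0$ structural decomposition (high-degree vertices have only pendant neighbours) already respects this symmetry, but the optimisation over $|X\cap U|$, $|X\cap V|$ and the residual graph has to be carried out with both sides active, and that is where the threshold $n-m\gtrless k-1$ in part~(4) will emerge.
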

\begin{theorem}
\label{main3}
  Let $n\geq m\geq 3$.
Then
  $$ ex(n,m;\mathbb{T}_{3,3})=\left\{\begin{array}{ll} 2(n+m)-8,  & \mbox{for} \ \
   n\geq5,m\geq 5;\\
  9, & \mbox{for} \ \ n=m=4;\\
     2n, & \mbox{for\ else}.\end{array}\right.$$
 \end{theorem}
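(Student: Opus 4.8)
The starting point is that $\mathbb{T}_{3,3}$ is a \emph{finite} family: among the six trees on six vertices, exactly three are $(3,3)$-bipartite, namely the path $P_6$, the double star $S_{3,3}$, and the spider $\widehat{T}$ obtained from $P_5$ by attaching a new pendant vertex to the central vertex of $P_5$. An $(n,m)$-bipartite graph fails to contain all of $\mathbb{T}_{3,3}$ exactly when it is $F$-free for at least one $F\in\mathbb{T}_{3,3}$, so
\[
ex(n,m;\mathbb{T}_{3,3})=\max\bigl\{\,ex(n,m;P_6),\ ex(n,m;S_{3,3}),\ ex(n,m;\widehat{T})\,\bigr\}.
\]
Hence it suffices to compute the three single--graph Tur\'{a}n numbers and take the largest. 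The first is known: since $P_6=P_{2\cdot 3}$, Theorem~\ref{P2l} (with $l=3$) gives $ex(n,m;P_6)=2n$ for $m=3$ and $ex(n,m;P_6)=2(n+m)-8$ for $m\ge 4$.

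Next I would treat $S_{3,3}$. The key remark is that a bipartite graph contains $S_{3,3}$ if and only if it has an edge both of whose ends have degree at least $3$; thus $G$ is $S_{3,3}$-free iff $Z:=\{v:d_G(v)\ge 3\}$ is independent. Put $p=|Z\cap U|$, $q=|Z\cap V|$. Every vertex of $Z\cap U$ sends all its edges into $V\setminus Z$, every vertex of $V\setminus Z$ has degree at most $2$, and symmetrically. Summing degrees on one side and running through the cases $p=0$ or $q=0$, then $p=q=1$, then $\{p,q\}=\{1,k\}$ with $k\ge 2$, and finally $p,q\ge 2$, I expect to obtain
\[
e(G)\le\max\{\,2n,\ n+2m-3,\ 2(n+m)-8\,\},
\]
each value being attained (respectively: any $G$ in which all vertices of $U$ have degree $2$; the graph with $d(u_1)=m-1$, $d(v_1)=n-1$, $u_1\not\sim v_1$, together with a matching of size $m-1$ between $U\setminus\{u_1\}$ and $V\setminus\{v_1\}$; and $K_{2,n-2}\cup K_{2,m-2}$ with the $2$-element side of $K_{2,n-2}$ in $V$ and that of $K_{2,m-2}$ in $U$). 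In particular $ex(4,4;S_{3,3})=9$.

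The harder case is $\widehat{T}$, and this is where I expect the main difficulty. First one proves a local obstruction lemma: if $d_G(c)\ge 3$ then $G\supseteq\widehat{T}$ unless either at most one neighbour of $c$ has a neighbour other than $c$, or every neighbour of $c$ of degree at least $2$ has the same (single) second neighbour. Iterating this within a connected component yields a structure theorem: a connected $\widehat{T}$-free bipartite graph is an even cycle, a ``double broom'' (a path with pendant vertices attached only to its two endpoints --- this subsumes stars, paths, brooms and double stars), or a book $K_{2,s}$ with pendant leaves attached to each of its two degree-$s$ vertices. Bounding the edges of each component type by its part sizes and then optimising over how such components can be packed into an $(n,m)$-bipartite graph gives $e(G)\le\max\{2n,\ 2(n+m)-8\}$, with equality for $K_{2,n}$ (its $2$-element side in $V$) when $2n$ is the maximum and for $K_{2,n-2}\cup K_{2,m-2}$ otherwise. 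Making the structure theorem precise, and carrying out the packing optimisation (in particular checking that cycles and long caterpillars never beat the two--book configuration), is the bulk of the work.

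It remains to compare the three quantities. For $m=3$ all three are at most $2n$ while $ex(n,m;P_6)=2n$, giving $2n$; for $m=4$, $n\ge 5$ all three equal $2n$; for $n,m\ge 5$ all three equal $2(n+m)-8$; and for $n=m=4$ they are $8,\,9,\,8$, so the maximum is $9$. This matches the stated formula. Equivalently, for $(n,m)\ne(4,4)$ a graph with more than the claimed number of edges simultaneously exceeds $ex(n,m;P_6)$, $ex(n,m;S_{3,3})$ and $ex(n,m;\widehat{T})$, hence contains all three trees, while a graph with $10$ edges on $(4,4)$ does likewise; the constructions above realise the values.
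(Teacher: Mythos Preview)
Your overall strategy is correct and matches the paper's: since $\mathbb{T}_{3,3}=\{G_1,G_2,G_3\}$ with $G_1=S_{3,3}$, $G_2=\widehat{T}$, $G_3=P_6$, one has $ex(n,m;\mathbb{T}_{3,3})=\max_i ex(n,m;G_i)$, and both you and the paper take $ex(n,m;P_6)$ from Theorem~\ref{P2l} and compute $ex(n,m;S_{3,3})$ via the observation that $S_{3,3}$-freeness is exactly independence of the set of vertices of degree $\ge 3$ (this is the paper's Lemma~\ref{4.1}, with the small cases handled in Lemma~\ref{small}). Your case split on $(p,q)$ and the resulting bound $\max\{2n,\,n+2m-3,\,2(n+m)-8\}$ are correct; the middle term is only relevant at $(4,4)$, where it gives the value $9$ and your construction is precisely the paper's graph $G_1'$.

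The genuine difference is in $\widehat{T}$. You propose a full structure theorem for connected $\widehat{T}$-free bipartite graphs (cycle, double broom, or $K_{2,s}$ with pendants on the two degree-$s$ vertices) followed by a component-packing optimisation. This is correct --- your local lemma forces exactly these three shapes, and the packing does yield $\max\{2n,\,2(n+m)-8\}$ --- but it is more work than necessary. The paper instead proves (Lemma~\ref{4.2}) that if $B_{n,m}$ is $\widehat{T}$-free and contains $S_{3,3}$, then every component containing an $S_{3,3}$ is a double star; peeling off these tree components leaves an $S_{3,3}$-free graph on fewer vertices with strictly too many edges, contradicting the already-established $ex(n,m;S_{3,3})$. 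This reduction bypasses the structure theorem and the packing entirely. Your route has the advantage of producing $ex(n,m;\widehat{T})$ and its extremal graphs explicitly; the paper's is shorter because it piggybacks on the $S_{3,3}$ computation rather than analysing $\widehat{T}$-free graphs from scratch.
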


 The rest of this paper is organized as follows. In Sections 2, 3 and 4, the proofs of Theorems~\ref{main1}, \ref{main2} and \ref{main3} are presented, respectively. Furthermore, all extremal graphs in Theorem~\ref{main1}, \ref{main2} and \ref{main3} are characterized.

\section{Proof of Theorem~\ref{main1}}
 First we prove a simple result: $ex(B_{n,n},C_{2n})=n(n-1)+1$, and characterize all the extremal graphs.
The proof of this result depends on the following  result \cite{chvatal1972} by Chv\'{a}tal which strengthens a result \cite{MM1962} of Moon and Moser on Hamiltonian cycles in bipartite graphs.
 \begin{lemma}\cite{MM1962}\label{1}
  Let $B_{n,n}=G[U,V;E] $ be a bipartite graph with $U=\{u_{1},\ldots,u_{n}\}$ and $V=\{v_{1},\ldots,v_{n}\}$, where $d(u_{1})\leq \ldots \leq d(u_{n})$ and $d(v_{1})\leq \ldots \leq d(v_{n})$. If
 \begin{align}
  d(u_{k})\leq k < n \mbox{ implies } d(v_{n-k})\geq n-k+1,  \label{eq1}
 \end{align}
   then $B_{n,n}$ is Hamiltonian. If \eqref{eq1} does not hold, then there exists a non-Hamiltonian bipartite graph $H=G^{\prime}[X,Y; E]$ with $X=\{x_{1},\ldots,x_{n}\}$ and $Y=\{y_{1},\ldots,y_{n}\}$, where $d(x_{1})\leq \ldots \leq d(x_{n})$ and $d(y_{1})\leq \ldots \leq d(y_{n})$, such that $d(u_{k})\leq d(x_{k})$ and $d(v_{k})\leq d(y_{k})$, for $k=1,\ldots,n$.
\end{lemma}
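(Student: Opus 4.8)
The plan is to derive the lemma from a bipartite analogue of Ore's theorem together with Chv\'{a}tal's degree-juggling argument. First I note that property \eqref{eq1} is preserved under adding an edge: adding an edge only raises the two sorted degree sequences, and if $d(u_k)\le k<n$ holds in the larger graph it already held in the smaller one, so the conclusion $d(v_{n-k})\ge n-k+1$ held there and survives. Hence, to prove the Hamiltonicity assertion, I may assume $B=G[U,V;E]$ is edge-maximal among non-Hamiltonian bigraphs and still satisfies \eqref{eq1}.

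\emph{The bipartite Ore step.} I claim $B$ has a non-adjacent pair $u\in U$, $v\in V$ with $d(u)+d(v)\le n$. Since $K_{n,n}$ is Hamiltonian for $n\ge2$, $B$ has a non-adjacent pair $u,v$, and by maximality $B+uv$ is Hamiltonian, so $B$ has a Hamiltonian path $P=y_1y_2\cdots y_{2n}$ with $y_1=u\in U$ and $y_{2n}=v\in V$; thus $y_i\in U$ for odd $i$ and $y_i\in V$ for even $i$. If $u$ hits $y_{2j}$ and $v$ hits $y_{2j-1}$ for a common $j$, then $y_1\cdots y_{2j-1}y_{2n}y_{2n-1}\cdots y_{2j}y_1$ is a Hamiltonian cycle of $B$, a contradiction. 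Writing $N(u)=\{y_{2j}:j\in A\}$ and $N(v)=\{y_{2j-1}:j\in B\}$ with $A\subseteq\{1,\dots,n-1\}$ (as $v=y_{2n}$ misses $u$) and $B\subseteq\{1,\dots,n\}$ (with $n\in B$ coming from the path edge $y_{2n-1}y_{2n}$), the displayed cycle shows $A\cap B\cap\{1,\dots,n-1\}=\emptyset$, hence $d(u)+d(v)=|A|+|B|\le n$.

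\emph{The degree-juggling step.} Among all non-adjacent cross pairs with degree sum at most $n$, choose $\{u_0,v_0\}$ maximizing $d(u_0)+d(v_0)$, and consider first the case $d(u_0)\le d(v_0)$. Put $k=d(u_0)$; then $2k\le n$, so $k<n$. Every $u'\in U$ that misses $v_0$ satisfies $d(u')+d(v_0)\le d(u_0)+d(v_0)$, hence $d(u')\le k$, and there are $n-d(v_0)\ge k$ such vertices, so $d(u_k)\le k<n$; then \eqref{eq1} forces $d(v_{n-k})\ge n-k+1$. But every $v'\in V$ that misses $u_0$ has $d(v')\le d(v_0)\le n-k$, and there are $n-d(u_0)=n-k$ of these, so $d(v_{n-k})\le n-k$, a contradiction. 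The case $d(u_0)>d(v_0)$ is symmetric: with $k'=d(v_0)$, the non-neighbours of $v_0$ in $U$ give $d(u_{n-k'})\le n-k'<n$, so \eqref{eq1} yields $d(v_{k'})\ge k'+1$, while the non-neighbours of $u_0$ in $V$ give $d(v_{k'})\le k'$, again a contradiction. Thus $B$ is Hamiltonian, which proves the first assertion.

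\emph{Sharpness.} If \eqref{eq1} fails, fix $k<n$ with $d(u_k)\le k$ and $d(v_{n-k})\le n-k$, and take $H=G'[X,Y;E]$ with $X=X_1\cup X_2$, $Y=Y_1\cup Y_2$, $|X_1|=|Y_2|=k$, $|X_2|=|Y_1|=n-k$, and all cross edges present except those between $X_1$ and $Y_1$. Then every vertex of $X_1$ has degree $k$, every vertex of $Y_1$ has degree $n-k$, and every other vertex has degree $n$, so the sorted degree sequences of $H$ dominate those of $B$ coordinatewise. And $H$ is non-Hamiltonian, since in a Hamiltonian cycle each of the $k$ vertices of $X_1$ would need both neighbours among the $k$ vertices of $Y_2$, so the cycle restricted to $X_1\cup Y_2$ would be a $2$-regular subgraph on only $2k<2n$ vertices, which is impossible. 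The main obstacle I anticipate is the parity bookkeeping along the bipartite Hamiltonian path in the Ore step (in particular the two boundary vertices $y_{2n-1}$ and $y_{2n}$), together with applying the asymmetric hypothesis \eqref{eq1} correctly in the two subcases of the degree-juggling step.
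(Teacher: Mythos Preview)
The paper does not prove this lemma at all: it is quoted as a known result of Moon--Moser, in the sharpened form due to Chv\'{a}tal, and is used as a black box to derive Lemma~\ref{2}. So there is no ``paper's own proof'' to compare against.

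Your argument is a correct, self-contained proof along the classical Bondy--Chv\'{a}tal lines: pass to an edge-maximal counterexample, use the Hamiltonian-path crossing trick to show that every non-adjacent cross pair $u\in U$, $v\in V$ has $d(u)+d(v)\le n$, and then run the Chv\'{a}tal degree-counting contradiction; the extremal construction for sharpness is the standard one and your 2-regularity argument on $X_1\cup Y_2$ correctly shows it is non-Hamiltonian. Two small remarks. First, your Ore step actually proves the inequality $d(u)+d(v)\le n$ for \emph{every} non-adjacent cross pair (since the maximality argument applies to each such pair), and you implicitly rely on this in the degree-juggling step when bounding $d(u')$ and $d(v')$; it would be cleaner to state it that way. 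Second, your case split on whether $d(u_0)\le d(v_0)$ is not really needed: the ``first case'' argument goes through verbatim with $k=d(u_0)$ regardless, since $k<n$ follows already from $u_0$ missing $v_0$, and both counting steps use only $d(u_0)+d(v_0)\le n$. But this is cosmetic; the proof is sound.
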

\begin{lemma}\label{2}
  Let $B_{n,n}=G[U,V;E]$ be a bipartite graph with $|U|=|V|=n\geq 2$.
  Then
   $$ex(n,n; C_{2n})=n^2-n+1.$$
   Furthermore, if a bipartite graph $B_{n,n}$ with $n^2-n+1 $ edges does not contain $C_{2n}$ as a subgraph, then $B_{n,n}=K_{n, n-1}+e.$ In other words, if
  \begin{align}
        e(B_{n,n})\geq n^2-n+1  \label{eq2}
  \end{align}
 holds, then $G$ is Hamiltonian unless $B_{n,n}$ is the graph obtained from $K_{n,n-1}$ by adding a pendent edge, i.e.,  $B_{n,n}=K_{n,n-1}+e$.
  \end{lemma}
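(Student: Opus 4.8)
The plan is to combine a single lower-bound construction with Lemma~\ref{1}, invoked only through the contrapositive of its first assertion, followed by a short edge count and an equality analysis. For the lower bound one checks that $K_{n,n-1}+e$ has $n(n-1)+1=n^{2}-n+1$ edges and contains no $C_{2n}$: any subgraph of a graph on $2n$ vertices isomorphic to $C_{2n}$ is a spanning cycle, whereas the pendent vertex of $K_{n,n-1}+e$ has degree $1<2$ and cannot lie on a cycle. Hence $ex(n,n;C_{2n})\ge n^{2}-n+1$, and it remains to prove the reverse inequality together with uniqueness of the extremal graph.

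For this, let $B_{n,n}=G[U,V;E]$ satisfy $e(G)\ge n^{2}-n+1$ and assume $G$ contains no $C_{2n}$, which for a graph in $\mathbb{B}_{n,n}$ is the same as $G$ not being Hamiltonian. By the contrapositive of the first part of Lemma~\ref{1}, condition \eqref{eq1} fails, so there is an index $k$ with $1\le k<n$, $d(u_{k})\le k$ and $d(v_{n-k})\le n-k$, the degrees being listed nondecreasingly. From $d(u_{1})\le\cdots\le d(u_{k})\le k$ we get
$$ n^{2}-n+1\ \le\ e(G)=\sum_{i=1}^{n}d(u_{i})\ \le\ k\cdot k+(n-k)\cdot n\ =\ n^{2}-k(n-k), $$
so $k(n-k)\le n-1$, i.e. $(k-1)\bigl(k-(n-1)\bigr)\ge 0$; since $1\le k\le n-1$ this forces $k=1$ or $k=n-1$ (which also disposes of the small cases $n=2,3$, where no other index is available). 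In either case equality must hold throughout the displayed chain, so $e(G)=n^{2}-n+1$; in particular no non-Hamiltonian $B_{n,n}$ has more than $n^{2}-n+1$ edges, giving $ex(n,n;C_{2n})=n^{2}-n+1$.

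It then remains to read off the structure of $G$. If $k=1$, equality forces $d(u_{1})=1$ and $d(u_{2})=\cdots=d(u_{n})=n$, so $u_{2},\dots,u_{n}$ are joined to all of $V$ while $u_{1}$ has exactly one neighbour; that is precisely $K_{n,n-1}+e$. If $k=n-1$, the failure of \eqref{eq1} also supplies $d(v_{1})\le 1$, and running the same count on the $V$-side forces $d(v_{1})=1$ and $d(v_{2})=\cdots=d(v_{n})=n$, so $G$ is again $K_{n,n-1}+e$, now with the degree-one vertex in $V$ instead of $U$. This establishes the ``$G$ is Hamiltonian unless $B_{n,n}=K_{n,n-1}+e$'' form of the statement and the uniqueness of the extremal graph.

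I do not expect a serious obstacle here, since Lemma~\ref{1} does the real work; the steps needing care are the equivalence between ``containing $C_{2n}$'' and ``being Hamiltonian'' (valid only because $C_{2n}$ spans $B_{n,n}$), writing down the exact negation of \eqref{eq1}, and the final equality analysis that upgrades the edge bound into the precise graph $K_{n,n-1}+e$. Note that the second, Chv\'{a}tal-type half of Lemma~\ref{1} is not needed for this argument.
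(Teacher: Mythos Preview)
Your proof is correct and follows essentially the same approach as the paper's: apply the contrapositive of the first part of Lemma~\ref{1} to obtain an index $k$ with $d(u_k)\le k$ and $d(v_{n-k})\le n-k$, bound $e(G)\le k^2+(n-k)n$, conclude $k\in\{1,n-1\}$, and read off the structure from the resulting equalities. Your write-up is slightly more explicit than the paper's (you spell out the lower-bound construction and the $C_{2n}$/Hamiltonian equivalence, and you justify the equality analysis more carefully), but there is no substantive difference in method.
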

 \begin{proof} Let $U=\{u_1, \ldots, u_n\}$ and $V=\{v_1, \ldots, v_n\}$ with $d(u_{1})\leq \ldots \leq d(u_{n})$ and $d(v_{1})\leq \ldots \leq d(v_{n})$.  Suppose that $B_{n,n}$ is not Hamiltonian.  Then by Lemma~\ref{1}, there is a vertex $u_{k}$ such that $d(u_{k})\leq k<n$ and $d(v_{n-k})\leq n-k.$
   Hence  $n^2-n+1\le e(B_{n,n})\leq k^2+(n-k)n$, which implies $k=1$ or $k=n-1$.
   If $k=n-1,$ then $d(v_{1})\leq 1$, so $d(v_{1})=1,d(v_{2})=\ldots =d(u_{n})=n,d(u_{1})=\ldots =d(u_{n-1})=n-1,$ and $d(u_{n})=n,$ and hence $B_{n,n}= K_{n,n-1}+e$. If $k=1$, then $d(u_{1})=1,d(u_{2})=\ldots =d(u_{n})=n,d(v_{1})=\ldots =d(v_{n-1})=n-1,$ $d(v_{n})=n,$  and hence $B_{n,n}= K_{n,n-1}+e$.
\end{proof}
In order to prove Theorem~\ref{main1}, we need some lemmas.
  \begin{lemma}\label{pendent vertex n,n}
Let $T_{n,m}=T[U,V;E]$ be a  tree with $|U|=n$ and $|V|=m$. If all the pendent vertices of $T$ are in $U$, then $n>m$.
\end{lemma}
\begin{proof}  We prove the statement by induction on $n+m$. Since all the pendent vertices of $T$ are in $U$, $n+m\ge 3$. It is easy to see the assertion holds for $n+m=3$.
   Assume the assertion holds for $n+m-1$.  Now let $u$ in $U$  be a pendent vertex of $T_{n,m}$ and $v$  be its neighbour in $V$. If $v$ is not a pendent vertex of $T_{n-1,m}=T_{n,m}-\{u\}$, then all pendent vertices in $T_{n-1, m}$   are  in $U\setminus \{u\}$ and by the induction hypothesis,  $n-1>m$. So the assertion holds. If  $v$ is  a pendent vertex of $T_{n-1,m}=T_{n,m}-\{u\}$,  then all pendent vertices in  $T_{n-1,m-1}=T_{n,m}-\{u,v\}$ are in $U\setminus\{u\}$. Hence by the induction hypothesis, $n-1>m-1$, which implies $|U|>|V|$.
\end{proof}
\begin{lemma}\label{pendent vertex n,m}
Let $T_{n,m}=T[U,V;E]$ be a tree with  $|U|=n\geq|V|=m$. Then there are at least $n-m+1$ pendent vertices in $U$.
\end{lemma}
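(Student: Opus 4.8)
The plan is to avoid induction entirely and argue by a direct degree count, exploiting the fact that $T$ is simultaneously a tree and a bipartite graph with the prescribed parts $U$ and $V$.

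First I would record the two elementary facts I will use. Since $T$ is a tree on $n+m$ vertices, it has exactly $n+m-1$ edges; and since $T$ is bipartite with parts $U$ and $V$, every edge contributes exactly one to $\sum_{u\in U}d_T(u)$, so $\sum_{u\in U}d_T(u)=n+m-1$. I would also note that $T$ is connected with at least two vertices (as $n\ge m\ge 1$), hence $d_T(u)\ge 1$ for every $u\in U$.

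Now let $p$ denote the number of pendent vertices of $T$ lying in $U$, that is, the vertices of $U$ of degree exactly one. The remaining $n-p$ vertices of $U$ then each have degree at least two, so
$$n+m-1=\sum_{u\in U}d_T(u)\ge p\cdot 1+(n-p)\cdot 2=2n-p,$$
which rearranges to $p\ge n-m+1$. This is exactly the assertion.

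I do not expect a genuine obstacle here; the computation is one line once the two bookkeeping facts are in place. The only points needing a word of care are the degenerate range of small $n+m$ — handled at once by connectivity, which forces $d_T(u)\ge 1$ — and the convention (as fixed earlier in the paper) that "pendent" means degree exactly one, so that a non-pendent vertex of $U$ genuinely has degree at least two. As a byproduct, running the identical argument with the roles of $U$ and $V$ interchanged shows that the number of pendent vertices in $V$ is at least $m-n+1$; this recovers Lemma~\ref{pendent vertex n,n}, since if every pendent vertex lies in $U$ then $m-n+1\le 0$, i.e.\ $n>m$.
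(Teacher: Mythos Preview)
Your proof is correct and takes a genuinely different route from the paper's. The paper argues by contradiction via the preceding Lemma~\ref{pendent vertex n,n} (itself proved by induction): assuming at most $n-m$ pendent vertices lie in $U$, it deletes them all to obtain a tree $T_{n',m}$ with $n'\ge m$ whose pendent vertices all lie in $V$, and then invokes Lemma~\ref{pendent vertex n,n} to force $m>n'$, a contradiction. Your approach replaces this two-step route (induction plus reduction) by a single degree-sum inequality, using only that $\sum_{u\in U}d_T(u)=e(T)=n+m-1$ and that non-pendent vertices have degree at least two. This is shorter and more elementary, and as you observe it inverts the logical dependence: your argument recovers Lemma~\ref{pendent vertex n,n} as a corollary rather than requiring it as input. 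The paper's approach, by contrast, makes the structural picture (strip leaves in $U$, examine what remains) a bit more visible, which is occasionally useful when one wants to iterate such deletions.
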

\begin{proof} Suppose that there are at most $n-m$ pendent vertices in $U$. Then
let $T_{n^{\prime},m}$ be the tree obtained from $T_{n,m}$ by deleting all pendent vertices in $U$. Then  $ n^{\prime}\geq m$ and all  pendent vertices of $T_{n^{\prime},m}$ are in $V$.  Hence by Lemma~\ref{pendent vertex n,n},  we have $m>n^{\prime}$, which  is  a contradiction. So the assertion holds.
\end{proof}

Moreover, we need the following notation.

  \begin{definition}\label{strongly}
  Let $B_{n,n}=G[U,V;E]$ be a bipartite graph with $|U|=|V|=n$ and  $T_{n,n}=T[U^{\prime},V^{\prime};E^{\prime}]$ be a tree with $|U^{\prime}|=|V^{\prime}|=n$. If there exist two {\it embeddings}, says,  $f: T_{n,n}\rightarrow B_{n,n}$ and $g: T_{n,n}\rightarrow B_{n,n}$,  such that $f(U^{\prime})=U, f(V^{\prime})=V;$ and $g(V^{\prime})=U, g(U^{\prime})=V$, then we say that $B_{n,n}$ {\it strongly contains} $T_{n,n}$ as a subgraph.
  \end{definition}

\begin{lemma}\label{strongly,m,m}
Let $B_{n,n}=G[U,V;E]$ be a bipartite graph with $U=\{u_{1},\ldots,u_{n}\}$ and $V=\{v_{1},\ldots,v_{n}\}$ such that $d(u_{1})\leq \ldots \leq d(u_{n})$ and $d(v_{1})\leq \ldots \leq d(v_{n})$, where $n\ge 3$. If $e(B_{n,n})\geq n(n-1)$, then
$B_{n,n}$ strongly contains all trees in $\mathbb{T}_{n,n}$ as subgraphs, unless the degree of each vertex in $U$ (or $V$) is $n-1$.
 \end{lemma}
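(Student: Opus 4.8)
The plan is to induct on $n$, with $n=3$ as a (finite) base case, and to extract at the outset the one structural fact that makes everything go: a non-exceptional $B_{n,n}$ with $e(B_{n,n})\ge n(n-1)$ has a \emph{universal} vertex on each side. Indeed, $\sum_{u\in U}d(u)=e(B_{n,n})\ge n(n-1)$; if every $d(u)$ were $\le n-1$ this would force all of them to equal $n-1$, which is excluded, so some $u^*\in U$ hits all of $V$, and symmetrically some $v^*\in V$ hits all of $U$. If $B_{n,n}=K_{n,n}$ the lemma is immediate, so I assume there is a non-edge; then every non-edge lies in $(U\setminus\{u^*\})\times(V\setminus\{v^*\})$ and there are at most $n^2-e(B_{n,n})\le n$ of them, so $B_{n,n}$ has no isolated vertex and at most one vertex of degree $1$. (The near-extremal structure results, Lemmas~\ref{1} and \ref{2}, help organize the handful of exceptional configurations that then occur.)

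For the inductive step ($n\ge 4$) fix $T=T[U',V';E']\in\mathbb{T}_{n,n}$ and an orientation $f\colon U'\to U$, $V'\to V$ (the reversed orientation being symmetric). By Lemma~\ref{pendent vertex n,n} (used once with either side in the role of the larger side), $T$ has a pendant vertex $\ell_U\in U'$, say with neighbour $p_U\in V'$, and a pendant vertex $\ell_V\in V'$ with neighbour $p_V\in U'$; since $n\ge 3$ they are non-adjacent, so $T'':=T-\ell_U-\ell_V\in\mathbb{T}_{n-1,n-1}$. Now pick a non-edge $\{u_1,v_1\}$ of $B_{n,n}$ and set $B':=B_{n,n}-u_1-v_1\in\mathbb{B}_{n-1,n-1}$. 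Because $u_1\not\sim v_1$ we have $d(u_1)+d(v_1)\le 2n-2$, hence $e(B')\ge n(n-1)-(2n-2)=(n-1)(n-2)$; and since $u^*,v^*$ survive in $B'$ with degree $n-1\ne n-2$, $B'$ is again non-exceptional. So by the induction hypothesis $B'$ strongly contains $T''$; in particular there is an embedding of $T''$ into $B'$ carrying $U'\setminus\{\ell_U\}$ onto $U\setminus\{u_1\}$ and $V'\setminus\{\ell_V\}$ onto $V\setminus\{v_1\}$. Finally I place $\ell_U$ on the unused vertex $u_1$ and $\ell_V$ on $v_1$; this completes an embedding of $T$ exactly when the image of $p_U$ hits $u_1$ and the image of $p_V$ hits $v_1$.

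Thus the whole argument reduces to one point, which is where I expect the real work to lie: choosing the non-edge $\{u_1,v_1\}$ so that both reattachments are forced. The clean case is when every vertex of $B_{n,n}$ has degree $\ge n-1$, i.e. $B_{n,n}$ is $K_{n,n}$ minus a matching $M$ (not saturating either side, by non-exceptionality): take $\{u_1,v_1\}\in M$; then $v_1$ is the only non-neighbour of $u_1$ and $u_1$ the only non-neighbour of $v_1$, so any image of $p_U$ in $V\setminus\{v_1\}$ hits $u_1$ and any image of $p_V$ in $U\setminus\{u_1\}$ hits $v_1$, and we are done. In general one works with the non-edge graph $\bar H$ (at most $n$ edges, none meeting $u^*$ or $v^*$): if $\bar H$ has an edge whose two endpoints both have $\bar H$-degree $1$, the same argument applies; the remaining configurations are very restricted, and I would dispose of them by combining the options still available — pairing a degree-$(n-1)$ vertex with its unique non-neighbour and arguing the second reattachment separately, re-choosing which pendant pair of $T$ to delete, or routing one pendant vertex through $u^*$ or $v^*$ — together with a short ad hoc treatment of a few small graphs (some arising only for $n=4$). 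The base case $n=3$ is a direct verification: $\mathbb{T}_{3,3}$ consists of only three trees — $P_6$, $S_{3,3}$, and the spider with legs $1,2,2$ — and each is embedded in both orientations using $u^*$, $v^*$ and the at most three remaining edges of $B_{3,3}$.
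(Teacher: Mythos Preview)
Your proposal has a genuine gap in the inductive step. After embedding $T''$ into $B'$ by induction, you need the image of $p_U$ to be adjacent to $u_1$ and the image of $p_V$ to be adjacent to $v_1$. You establish this only in the ``matching'' case, where both endpoints of the chosen non-edge have degree $n-1$. For the general case you offer a menu of possible repairs --- re-choosing which pendants to delete, routing through $u^*$ or $v^*$, ad hoc small cases --- but carry none of them out. The difficulty is real: the inductive embedding of $T''$ gives you no control over where $p_U$ and $p_V$ land, and if (say) $u_1$ has several non-neighbours in $V\setminus\{v_1\}$, nothing prevents $p_U$ from landing on one of them. ``The remaining configurations are very restricted'' is not an argument; with up to $n$ non-edges concentrated arbitrarily in $(U\setminus\{u^*\})\times(V\setminus\{v^*\})$, the case analysis you are gesturing at is not obviously short.

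The paper's proof avoids this entirely with a different reduction. Instead of deleting two leaves of $T$ and a non-edge of $B$, it deletes from $T$ a single leaf $y_0$ \emph{together with its neighbour} $x$, and reconnects the resulting forest by attaching the other neighbours $y_1,\dots,y_t$ of $x$ to some vertex $x'$ in the same partite class as $x$; this yields a tree $T'\in\mathbb{T}_{n-1,n-1}$. On the graph side it deletes the universal vertex $u_n$ together with a vertex $v_1$ of degree at most $n-2$. Since $u_n v_1$ is an edge, $e(B')\ge n(n-1)-(n+(n-2)-1)>(n-1)(n-2)$ strictly, so $B'$ is automatically non-exceptional and induction applies. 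One then extends the embedding of $T'$ by sending $x\mapsto u_n$ and $y_0\mapsto v_1$: because $u_n$ is universal, every edge incident to $x$ is present in $B$, and no reattachment problem arises. The two orientations of ``strongly contains'' are handled separately (using $(u_n,v_1)$ for one and $(v_n,u_1)$ for the other), rather than simultaneously as you attempt. The key idea you are missing is to send a \emph{non-leaf} tree vertex to a universal graph vertex, so that all its incidences are free.
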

\begin{proof} It is sufficient for us to prove that $B_{n,n}$ with $e(B_{n,n})=n(n-1)$ and $d(u_n)=d(v_n)=n$  strongly contains all trees in $\mathbb{T}_{n,n}$ as subgraphs for $n\ge 3$.
 For $n=3$, it follows from Figure 1 that the assertion holds.

\begin{picture}(200,80)(0,-50)
\put(0,20){\line(1,0){20}}
\put(0,20){\line(1,-1){20}}
\put(0,20){\line(1,-2){20}}
\put(0,0){\line(1,1){20}}
\put(0,0){\line(1,0){20}}
\put(0,-20){\line(1,2){20}}

\put(0,20){\circle*{2}}
\put(20,20){\circle*{2}}
\put(0,0){\circle*{2}}
\put(20,0){\circle*{2}}
\put(0,-20){\circle*{2}}
\put(20,-20){\circle*{2}}
\put(10,-30){$G_{0}$}

\put(40,20){\line(1,0){20}}
\put(40,20){\line(1,-1){20}}
\put(40,20){\line(1,-2){20}}
\put(40,0){\line(1,1){20}}
\put(40,-20){\line(1,2){20}}

\put(40,20){\circle*{2}}
\put(60,20){\circle*{2}}
\put(40,0){\circle*{2}}
\put(60,0){\circle*{2}}
\put(40,-20){\circle*{2}}
\put(60,-20){\circle*{2}}
\put(50,-30){$G_{1}$}

\put(80,20){\line(1,0){20}}
\put(80,20){\line(1,-1){20}}
\put(80,20){\line(1,-2){20}}
\put(80,0){\line(1,0){20}}
\put(80,-20){\line(1,0){20}}

\put(80,20){\circle*{2}}
\put(100,20){\circle*{2}}
\put(80,0){\circle*{2}}
\put(100,0){\circle*{2}}
\put(80,-20){\circle*{2}}
\put(100,-20){\circle*{2}}
\put(90,-30){$G_{2}$}

\put(120,20){\line(1,0){20}}
\put(120,0){\line(1,1){20}}
\put(120,0){\line(1,0){20}}
\put(120,-20){\line(1,0){20}}
\put(120,-20){\line(1,1){20}}

\put(120,20){\circle*{2}}
\put(140,20){\circle*{2}}
\put(120,0){\circle*{2}}
\put(140,0){\circle*{2}}
\put(120,-20){\circle*{2}}
\put(140,-20){\circle*{2}}
\put(130,-30){$G_{3}$}

\put(0,-40){Figure 1: $G_{0}$ is the only $(3,3)$-bipartite graph with six edges and $d(u_3)=d(v_3)=3$.}
\put(45,-45){$G_{1},G_{2},G_{3}$ are all trees in $\mathbb{T}_{3,3}$}
\end{picture}

   Assume the assertion holds for $n-1$.  Let $T_{n,n}=G[U^{\prime},V^{\prime};E^{\prime}]$ be any tree in $\mathbb{T}_{n,n}$ with $|U^{\prime}|=|V^{\prime}|=n$.
  We may assume that $e(B_{n,n})=n(n-1)$, $d(u_n)=n$ and $d(v_1)\leq n-2$. Let $B^{\prime}_{n-1,n-1}=B_{n,n}-\{u_n,v_1\}$. Note that
  $$\begin{array}{lll}
  e(B^{\prime}_{n-1,n-1})&=& e(B_{n,n})-(d(u_n)+d(v_1)-1)\\
  &\geq&n(n-1)-(n+n-2-1)>(n-1)(n-2).
  \end{array}$$
  Let $y_0$ be a pendent vertex of $T_{n,n}$ and $x$ be the unique neighbor of $y_0$ in $T_{n,n}$, say, $y_0\in V^{\prime}$ and $x\in U^{\prime}$. Let $\{y_0,y_1,\ldots,y_t\}$ be the set of all neighbors of $x$ in $T_{n,n}$. So, $1\leq t\leq n-1$. Note that $T_{n,n}-\{x,y_0\}$ consists of $t$ components each containing exactly one vertex in $\{y_1,\ldots,y_t\}$. Let $x^{\prime}$ be a vertex in $T_{n,n}$ that belongs to the partite set same as $x$. We may assume that $x^{\prime}$ and $y_1$ are contained in the same component in $T_{n,n}-\{x,y_0\}.$ Then let $T^{\prime}_{n-1,n-1}$ be the graph obtained from $T_{n,n}-\{x,y_0\}$ by adding the edges $x^{\prime}y_i$ for $2\leq i \leq t$. Note that $T^{\prime}_{n-1,n-1}$ is a tree in $\mathbb{T}_{n-1,n-1}$. Then by the  induction hypothesis, there exists an embedding $f^{\prime}:T^{\prime}_{n-1,n-1}\rightarrow B^{\prime}_{n-1,n-1}$ (recall that $e(B^{\prime}_{n-1,n-1})> (n-1)(n-2)$, and hence the exceptional case cannot occur).  Then let $f$ be the mapping from $T_{n,n}$ to $B_{n,n}$ defined as follows:
  \begin{itemize}
  \item For any vertex $z$ with $z\neq x,y_0$, we have $f(z)=f^{\prime}(z)$.
  \item Define $f(x)=u_n$, and $f(y_0)=v_1$.
  \item For any edge $zw$ in $T_{n,n}$, if $\{z,w\}\cap\{x,y_0\}=\emptyset$, then $f(zw)=f^{\prime}(zw)$.
  \item For $1\le i\le t$, define $f(xy_i)$ as the edge in $B_{n,n}$ connecting $u_n$ and $f(y_i)$ (since $d(u_n)=n$, such an edge must exist.).
  \item Define $f(xy_0)$ as the edge in $T_{n,n}$ connecting $u_n$ and $v_1$.
\end{itemize}
We see that this map is indeed an embedding of $T_{n,n}$ into $B_{n,n}$ with $f(U^{\prime})=U$ and $f(V^{\prime})=V.$ Since we may also assume that $d(v_n)=n$ and $d(u_1)\leq n-2$, similarly,  there exists  an embedding $g$ of $T_{n,n}$ into $B_{n,n}$ with $g(V^{\prime})=U$ and $g(U^{\prime})=V.$ So we finish our proof.
\end{proof}

\begin{lemma}\label{main1'}
Let $B_{n,m}=G[U,V;E]$ be a bipartite graph with $U=\{u_{1},\ldots,u_{n}\}$ and $V=\{v_{1},\ldots,v_{m}\}$ such that $d(u_{1})\leq \ldots \leq d(u_{n})$ and $d(v_{1})\leq \ldots \leq d(v_{m})$,  where $n> m\geq 1$.
If  $e(B_{n,m})\geq m(n-1)$ holds,
  then $B_{n,m}$ contains all trees in $\mathbb{T}_{n,m}$ as subgraphs, unless $d(v_{1})=\ldots =d(v_{m})=n-1$,  or $n-m=1$, $d(v_{2})=\ldots =d(v_{m})=n$ and $d(v_{1})=1$.
\end{lemma}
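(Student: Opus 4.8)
The plan is to argue by induction on $n$ for a fixed $m$, with $m=1$ and $m=2$ handled as separate base cases. For $m=1$ the set $\mathbb{T}_{n,1}$ consists of the single star $K_{1,n}$, so $e(B_{n,1})\ge n-1$ already forces either $d(v_1)=n-1$ (the listed exception) or $d(v_1)=n$, which is an embedding of $K_{1,n}$. For $m=2$ the members of $\mathbb{T}_{n,2}$ are the ``double-brooms'', and a short direct analysis, via common neighbours of the two vertices of $V$, isolates the two exceptional graphs. So from now on assume $m\ge 3$. We may also assume $e(B_{n,m})=m(n-1)$: if $e(B_{n,m})\ge m(n-1)+1$ then, using $m\ge 3$, one can delete edges down to $m(n-1)$ edges so that the remaining graph is still neither of the two exceptional graphs, and invoke the equality case.

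The engine of the proof is a leaf exchange driven by the following structural fact: since $e(B_{n,m})=m(n-1)$ and $n>m$, at least $n-m$ vertices of $U$ have degree $m$, i.e.\ are adjacent to every vertex of $V$; in particular $d(u_n)=m$. Fix such a full-degree vertex $u^{*}$ and set $B^{-}:=B_{n,m}-u^{*}$, so $e(B^{-})=m(n-2)$, which is exactly the threshold for $(n-1,m)$. Given a target tree $T=T[U',V';E']\in\mathbb{T}_{n,m}$, Lemma~\ref{pendent vertex n,m} supplies at least $n-m+1\ge 2$ pendent vertices of $T$ inside $U'$; delete one of them, say $x$ with neighbour $y\in V'$, to obtain $T-x\in\mathbb{T}_{n-1,m}$. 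If $n-1>m$ and $B^{-}$ is not exceptional, then by the induction hypothesis $B^{-}$ contains $T-x$ --- the image of $V'$ being forced to be $V$ since $n-1>m$ --- and sending $x\mapsto u^{*}$ extends this to an embedding of $T$ into $B_{n,m}$, because $u^{*}$ hits every vertex of $V$. If $n-1=m$ (the base of the induction) we use Lemma~\ref{strongly,m,m} in place of the induction hypothesis: if $B^{-}\in\mathbb{B}_{m,m}$ is not exceptional then it strongly contains $T-x\in\mathbb{T}_{m,m}$, so we may choose the embedding that sends $V'$ into $V$, and again set $x\mapsto u^{*}$.

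It remains to handle the case where $B_{n,m}-u^{*}$ is exceptional for \emph{every} full-degree vertex $u^{*}$. If, for such a $u^{*}$, the graph $B^{-}$ is of the ``all degrees on the $m$-side equal'' type, then, since $u^{*}$ is adjacent to all of $V$, every vertex of $V$ has degree $n-1$ in $B_{n,m}$, so $B_{n,m}$ lies in the first exceptional family and there is nothing to prove. The only other way $B^{-}$ can be exceptional is the degenerate (second-family) type, which forces $n\in\{m+1,m+2\}$; tracking the degrees back shows that $B_{n,m}$ is then a near-complete bipartite graph, namely $K_{n,m}$ with a set $M$ of $m$ edges removed, one incident to each of $u_1,\dots,u_m$ (so $d(u_1)=\dots=d(u_m)=m-1$ while $u_{m+1},\dots,u_n$ are full). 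If the edges of $M$ meet all of $V$ this is again the first exceptional family; if $n=m+1$ and they all meet a single vertex of $V$ it is precisely the second exceptional family (here $B_{n,m}$ has a vertex of degree $1$ in $V$, which in any embedding must receive a pendent vertex of $V'$, yet not every tree in $\mathbb{T}_{m+1,m}$ has one). In every remaining near-complete case $B_{n,m}$ has minimum degree at least $2$, and I would embed $T$ directly: for $n=m+1$, map $V'$ to $V$ so that the highest-$T$-degree vertices of $V'$ meet the least-deficient vertices of $V$, then complete the embedding along $U'$ by a Hall-type matching, using that $T$ is a tree --- so no two vertices of $V'$ have two common neighbours --- and that the one full-degree vertex of $U$ absorbs the at most one ``congested'' vertex of $U'$; for $n=m+2$ only the single graph with a vertex of degree $2$ in $V$ survives, and there one places a vertex of $V'$ of $T$-degree at most $2$ (which exists by a short degree count) onto that vertex and routes its at most two neighbours onto the two full-degree vertices of $U$.

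I expect this last step to be the main obstacle. Driving the inductive engine is mechanical, but the near-complete graphs $K_{n,m}\setminus M$ that escape it must be embedded by hand, and the delicate point is to locate exactly where the embedding fails --- this is where the second exceptional family $n-m=1$, $d(v_1)=1$, $d(v_2)=\dots=d(v_m)=n$ appears. The Hall-type count for $n=m+1$ cannot be done by crude averaging, because a vertex of $V'$ that has very large degree in $T$ must be routed onto the (essentially unique) full-degree vertex of $V$; acyclicity of $T$ is what keeps the routing consistent. The remaining care is in the small cases $n\in\{m+1,m+2\}$ and $m\in\{1,2\}$, where neither the induction hypothesis nor Lemma~\ref{strongly,m,m} applies verbatim.
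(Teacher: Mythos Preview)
Your approach differs from the paper's in a structural way: you induct on $n$ (removing a single full-degree vertex $u^*\in U$ and a leaf of $T$ from $U'$), whereas the paper inducts on $m$, removing a \emph{pair} $(u_i,v_m)$ with $d(v_m)=n$ together with a leaf $y_0\in U'$ \emph{and its neighbour} $x\in V'$, then reconnecting the resulting forest via the same tree-surgery used in Lemma~\ref{strongly,m,m}. Because the paper always removes the top vertex $v_m$ of degree $n$, the extension step is automatic; the work is shifted into a four-case analysis choosing \emph{which} $u_i$ to delete so that $B'_{n-1,m-1}$ is never exceptional. Your scheme trades this case analysis for a simpler reduction, but pays for it at the bottom: when $B^-$ lands in an exceptional configuration you must embed directly.

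That direct embedding is where your write-up has a real gap. For $n=m+2$ your argument is fine (the residual graph $B_{n,m}-v_1$ is complete bipartite, so placing a $V'$-vertex of $T$-degree $\le 2$ on $v_1$ and its neighbours on the two full $U$-vertices finishes it). For $n=m+1$, however, the ``Hall-type matching'' is not a proof: you assert that mapping high-$T$-degree vertices of $V'$ to low-deficiency vertices of $V$ works, with the single full vertex of $U$ absorbing ``the at most one congested vertex of $U'$'', but you neither define ``congested'' nor verify Hall's condition, and acyclicity of $T$ alone does not give it. The cleanest fix is to notice that in this near-complete case some $v_j$ has $c_j=0$, hence $d(v_j)=n$, and then run exactly the paper's move: delete $v_j$ together with a carefully chosen $u_i$ and the leaf--neighbour pair $(y_0,x)$ from $T$, reconnecting as in Lemma~\ref{strongly,m,m}. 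In other words, once your induction on $n$ stalls you are forced into the paper's induction-on-$m$ step anyway; the paper simply does that step from the outset and thereby avoids the ad hoc endgame.
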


\begin{proof}  Assume that $B_{n,m}$ with $e(B_{n,m})\ge m(n-1)$ does not satisfy that $d(v_{1})=\ldots =d(v_{m})=n-1$,  or $n-m=1$, $ d(v_{2})=\ldots =d(v_{m})=n$ and $d(v_{1})=1$. We will prove this lemma by induction on $m$. For $m=1,2$, it is trivial and assume that $m\geq 3$. Let $d(v_1)\leq \ldots \leq d(v_m)$ and $d(u_1)\leq \ldots \leq d(u_n)$. Since $e(B_{n,m})\ge m(n-1)$, we can assume that $d(v_m)=n$, and $d(v_1)\geq 2$ when $n=m+1$. Let $T_{n,m}=G[U^{\prime},V^{\prime};E^{\prime}]$ with $|U^{\prime}|=n,|V^{\prime}|=m$ be any tree in $\mathbb{T}_{n,m}$. Since $n>m$, by Lemma~\ref{pendent vertex n,m}, $T_{n,m}$ has a pendent vertex in $U^{\prime}$. Let $y_0\in U^{\prime}$ be a pendent vertex of $T_{n,n}$ and $x\in V^{\prime}$ be the unique neighbor of $y_0$ in $T_{n,m}$. Let $T^{\prime}_{n-1,m-1}$ be obtained by the same way as Lemma~\ref{strongly,m,m}. We consider the following four cases.

{\bf Case 1.} $d(u_1)\leq m-2$. Let $B^{\prime}_{n-1,m-1}=B_{n,m}-\{u_1,v_m\}$. Then
  $$\begin{array}{lll}
  e(B^{\prime}_{n-1,m-1})&=& e(B_{n,m})-(d(u_1)+d(v_m)-1)\\
  &\geq&m(n-1)-(n+m-2-1)>(m-1)(n-2).
  \end{array}$$

{\bf Case 2.} $n>m+1,d(v_{m-1})=n$. Let $B^{\prime}_{n-1,m-1}=B_{n,m}-\{u_1,v_m\}$. Then
  $$\begin{array}{lll}
  e(B^{\prime}_{n-1,m-1})&=& e(B_{n,m})-(d(u_1)+d(v_m)-1)\\
  &\geq&m(n-1)-(n+m-1-1)=(m-1)(n-2).
  \end{array}$$
  and $d_{B^{\prime}_{n-1,m-1}}(v_{m-1})=n-1$.

{\bf Case 3.} $n>m+1,d(u_1)=m-1$ and $d(v_{m-1})<n$. Since $e(B_{n,m})\ge m(n-1)$, we have $m-1=d(u_1)\le \ldots \le d(u_n)$. Moreover, we have $ d(v_{m-1})=n-1$ and $v_{m-1}$ misses $u_i$ for some $i\in \{1,2,\ldots,m\}.$ Otherwise $e(B_{n,m})\leq n+(n-2)(m-1)<m(n-1)$, a contradiction. Let $B^{\prime}_{n-1,m-1}=B_{n,m}-\{u_i,v_m\}$. Then
$$\begin{array}{lll}
  e(B^{\prime}_{n-1,m-1})&=& e(B_{n,m})-(d(u_i)+d(v_m)-1)\\
  &\geq&m(n-1)-(n+m-1-1)=(m-1)(n-2).
  \end{array}$$
  and $d_{B^{\prime}_{n-1,m-1}}(v_{m-1})=n-1$.

{\bf Case 4.} $n=m+1, d(u_1)=m-1$ and $ d(v_1)\geq 2$. We consider the following two subcases. {\bf Subcase 4.1.} $e(B_{n,m})>m(n-1)$. Let $B^{\prime}_{n-1,m-1}=B_{n,m}-\{u_1,v_m\}$.  Then
  $$\begin{array}{lll}
  e(B^{\prime}_{n-1,m-1})&=& e(B_{n,m})-(d(u_{1})+d(v_m)-1)\\
  &>&m(n-1)-(n+m-1-1)=(m-1)(n-2).
  \end{array}$$
  {\bf Subcase 4.2.} $e(B_{n,m})=m(n-1)$. Then $d(u_1)=\ldots =d(u_n)=m-1$ and $d(v_1)\le n-2$. Moreover, there must be a vertex, say $v_{j2}\neq v_1$ missing $u_{i1}\in\{u_1,\ldots,u_n\}$. Otherwise, all of $\{u_1\ldots,u_n\}$ miss $v_1$, hence $d(v_1)=0$, a contradiction. By $d(v_1)\le n-2$, there exists a vertex $u_{i2}\neq u_{i1}$  such that   $u_{i2}$ misses $v_1$. Let $B^{\prime}_{n-1,m-1}=B_{n,m}-\{u_{i2},v_m\}.$  Then
$$\begin{array}{lll}
  e(B^{\prime}_{n-1,m-1})&=& e(B_{n,m})-(d(u_{i2})+d(v_m)-1)\\
  &=&m(n-1)-(n+m-1-1)=(m-1)(n-2).
  \end{array}$$
Let $d_{B^{\prime}_{n-1,m-1}}(v_1^{\prime})\leq \ldots \leq d_{B^{\prime}_{n-1,m-1}}(v_{m-1}^{\prime})$. Since $d_{B^{\prime}_{n-1,m-1}}(v_1)<n-1$, $e(B^{\prime}_{n-1,m-1})=(m-1)(n-2)$ and $u_{i1}$ misses $v_{j2}$ in $B^{\prime}_{n-1,m-1}$, we get $d_{B^{\prime}_{n-1,m-1}}(v_1^{\prime})\geq 2$.

It is easy to see that in all of the above cases $B^{\prime}_{n-1,m-1}$ satisfies the condition of Lemma~\ref{main1'} on $m-1$. Since $d(v_m)=n$ and by the induction hypothesis, $B^{\prime}_{n-1,m-1}$ contains all trees in  $\mathbb{T}_{n-1,m-1}$ as subgraphs. Similarly, by the method of Lemma~\ref{strongly,m,m}, we can find an embedding $f:T_{n,m}\rightarrow B_{n,m}$, and we have done.
\end{proof}

       We are ready to prove the main Theorem~\ref{main1}.

       \begin{proof} It follows from Lemmas~\ref{strongly,m,m} and \ref{main1'} that Theorem~\ref{main1} holds.
       \end{proof}
\section{Proof of Theorem~\ref{main2}}
Theorem~\ref{main2} can be proven by  the following several Lemmas.
\begin{lemma}
\label{main2,1}
  Let $n\geq m\geq 2$. Then $ex(n,m;\mathbb{T}_{2,2})=n+m-2.$ Moreover, if $m=2$, then all extremal graphs are $K_{1,p}\cup K_{1, n-p}$ for $p=0, \ldots, \lfloor\frac{n}{2}\rfloor$;  if $m\geq3$, then all extremal graphs are $K_{1,n-1}\cup K_{1,m-1}$.
 \end{lemma}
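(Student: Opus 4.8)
The plan is to first note that $\mathbb{T}_{2,2}$ contains only one tree: a tree on four vertices is either the path $P_4$ or the star $K_{1,3}$, and $K_{1,3}$ has parts of sizes $1$ and $3$, not $2$ and $2$. Hence $\mathbb{T}_{2,2}=\{P_4\}$, and "$B_{n,m}$ does not contain all trees of $\mathbb{T}_{2,2}$" is simply "$B_{n,m}$ is $P_4$-free"; so the lemma reduces to determining $ex(n,m;\{P_4\})$ and its extremal configurations.

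The second step is to describe $P_4$-free bipartite graphs. Every cycle $C_t$ with $t\ge 4$ contains $P_4$, so a $P_4$-free bipartite graph is a forest; and a tree of diameter $\ge 3$ contains $P_4$, so each component is a tree of diameter $\le 2$, i.e. a star $K_{1,t}$ (with isolated vertices counted as degenerate stars $K_{1,0}$). Thus $B_{n,m}$ is $P_4$-free if and only if it is a disjoint union of stars. If such a star forest has $r$ components, then since it spans all $n+m$ vertices, $e(B_{n,m})=(n+m)-r$. Each star places its centre in one part and its leaves in the other, so a single star cannot meet both $U$ and $V$ in $\ge 2$ vertices; as $n\ge m\ge 2$, this forces $r\ge 2$, hence $e(B_{n,m})\le n+m-2$. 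The bound is met by $K_{1,n-1}\cup K_{1,m-1}$ (one star centred at a vertex of $V$ with $n-1$ leaves in $U$, the other centred at a vertex of $U$ with $m-1$ leaves in $V$), which is $P_4$-free, so $ex(n,m;\mathbb{T}_{2,2})=n+m-2$.

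For the characterization I would examine the equality case $r=2$, with components $C_1,C_2$ meeting $U$ in $a_1,a_2$ vertices and $V$ in $b_1,b_2$ vertices, where $a_1+a_2=n$, $b_1+b_2=m$, and a nontrivial star $K_{1,t}$ has $\min$ of its two part-sizes equal to $1$ (a degenerate component contributing a $0$). When $m\ge 3$ (hence $n\ge 3$) one checks that neither $C_i$ can be an isolated vertex — otherwise the other is a star meeting both parts in $\ge 2$ vertices — and likewise that both centres cannot lie in the same part, since that would force $n=2$ or $m=2$. This pins down $\{a_1,a_2\}=\{1,n-1\}$ and $\{b_1,b_2\}=\{1,m-1\}$, paired so that $C_1=K_{1,m-1}$, $C_2=K_{1,n-1}$: the unique extremal graph $K_{1,n-1}\cup K_{1,m-1}$. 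When $m=2$ and $n>2$, the only admissible split is $b_1=b_2=1$ (the split $\{0,2\}$ would again require a star meeting both parts in $\ge 2$ vertices), so the two stars are centred at the two vertices of $V$ and $U$ is partitioned into blocks of sizes $p$ and $n-p$; up to isomorphism this is exactly $K_{1,p}\cup K_{1,n-p}$ for $p=0,1,\dots,\lfloor n/2\rfloor$, each with $n=n+m-2$ edges. The tiny case $n=m=2$ is settled by directly listing the two-edge $(2,2)$-bipartite graphs ($2K_2$ and $P_3\cup K_1$), which are $K_{1,1}\cup K_{1,1}$ and $K_{1,0}\cup K_{1,2}$.

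The structural facts (star forests, $e=(n+m)-r$) and the construction are routine; the only delicate part is the extremal case analysis, specifically ruling out degenerate isolated-vertex components and the "both centres in one part" configuration when $m\ge 3$, and checking that in the symmetric case $n=m=2$ no stray extremal graph escapes the stated list (swapping $U$ and $V$ identifies the apparent alternatives).
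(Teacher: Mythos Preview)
Your proof is correct and follows essentially the same approach as the paper: both identify $\mathbb{T}_{2,2}=\{P_4\}$, observe that a $P_4$-free bipartite graph is a star forest, deduce the bound $e\le n+m-2$ from the component count, and exhibit $K_{1,n-1}\cup K_{1,m-1}$ as the witness. The paper phrases the upper bound via edge count (at least $n+m-1$ edges forces a cycle or a connected graph on both parts of size $\ge 2$, hence $P_4$), whereas you phrase it via the structural characterization, but these are the same argument; your treatment of the extremal case is more explicit than the paper's, which simply asserts the conclusion from ``two components which are stars''.
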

\begin{proof} Clearly there is only one tree $P_{4}$ in $\mathbb{T}_{2,2}$.
Let $B_{n,m}$ be an $(n,m)$-bipartite graph with $e(B_{n,m})\geq n+m-1$. If $B_{n,m}$ is connected,  then  $B_{n,m}$ contains $P_4$ as a subgraph; if $B_{n,m}$ is disconnected, then there is a component which contains a cycle, hence $B_{n,m}$ contains $P_4$ as a subgraph. Moreover, $K_{1,n-1}\cup K_{1,m-1}$ does not contain $P_4$ as a subgraph. Hence $ex(n,m;\mathbb{T}_{2,2})=n+m-2.$
 Furthermore, let $B_{n,m}$ be any  extremal graph with $n+m-2$ edges, then $B_{n, m}$ has exactly two components which are stars. Hence  $B_{n,m}=K_{1,p}\cup K_{1, n-p}$ for $p=0, \ldots, \lfloor\frac{n}{2}\rfloor$ when $m=2$ and $B_{n,m}=K_{1,m-1}\cup K_{1, n-1}$ when $m\ge 3$.
\end{proof}

The following simple proposition is  useful  for the proof of the next four lemmas.

 \begin{proposition}\label{prop} Let $B_{n,m}$ be any graph which does not contain all trees in $\mathbb{T}_{k,2}$ as subgraphs. Then
  $$N(u)\cap N(v)=\emptyset$$  for any two vertices $u,v$ with $d(u)>\lceil\frac{k}{2}\rceil-1$  and  $d(v)>k-1$ in the same partition set.
\end{proposition}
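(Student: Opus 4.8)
The plan is to determine the structure of the trees in $\mathbb{T}_{k,2}$ explicitly and then argue by contradiction: if some pair $u,v$ as in the statement had a common neighbour, one could embed \emph{every} tree of $\mathbb{T}_{k,2}$ into $B_{n,m}$. So first I would record what $\mathbb{T}_{k,2}$ looks like. Let $T=T[U',V';E']$ be a tree with $|U'|=k$ and $V'=\{v_1,v_2\}$. Every vertex of $U'$ is joined to $v_1$, to $v_2$, or to both, and if exactly $a$ of them are joined to both then $e(T)=k+a$; since $T$ is a tree, $e(T)=k+1$, so precisely one vertex $w\in U'$ is a common neighbour of $v_1$ and $v_2$. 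Consequently $T$ is, up to isomorphism, obtained from the path $v_1\,w\,v_2$ by attaching $d_T(v_1)-1$ pendant vertices to $v_1$ and $d_T(v_2)-1$ pendant vertices to $v_2$, and $\mathbb{T}_{k,2}$ is exactly parametrised by the pairs $(a,b)=(d_T(v_1),d_T(v_2))$ with $a,b\ge 1$ and $a+b=k+1$.

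Now suppose, for contradiction, that $u,v$ are distinct vertices of one partition set of $B_{n,m}$ with $d(u)\ge\lceil k/2\rceil$ and $d(v)\ge k$ (integrality turns the strict inequalities of the statement into these) and that $N(u)\cap N(v)$ contains a vertex $w^{*}$. Fix an arbitrary $T\in\mathbb{T}_{k,2}$ with parameters $(a,b)$, and relabel so that $a\le b$; then $a\le\lfloor(k+1)/2\rfloor=\lceil k/2\rceil$ and $b=k+1-a\le k$. Define $f\colon V(T)\to V(B_{n,m})$ by $f(v_1)=u$, $f(v_2)=v$, $f(w)=w^{*}$. Since $d(u)-1\ge a-1$, choose $a-1$ distinct vertices of $N(u)\setminus\{w^{*}\}$ as the images of the pendant neighbours of $v_1$; since $d(v)-1\ge k-1=(a-1)+(b-1)$, after deleting those $a-1$ vertices from $N(v)\setminus\{w^{*}\}$ at least $b-1$ vertices remain, and take $b-1$ of them as the images of the pendant neighbours of $v_2$. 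All these images lie on the side opposite $\{u,v\}$ and, by construction, are pairwise distinct, so $f$ is an embedding and $T\subseteq B_{n,m}$. As $T$ was arbitrary, $B_{n,m}$ contains every tree of $\mathbb{T}_{k,2}$, contradicting the hypothesis; hence $N(u)\cap N(v)=\emptyset$.

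The argument is short, and its only delicate point is the bookkeeping in the embedding step: one must check that the greedily chosen vertices are genuinely pairwise distinct, so that $f$ is an injection rather than a mere homomorphism. This is precisely where the two thresholds enter — the bound $d(u)>\lceil k/2\rceil-1$ supplies enough ``private'' neighbours of $u$ for the shorter end $a$ of every double star, while $d(v)>k-1$ leaves room on the $v$ side for the common vertex $w^{*}$ and for all $b-1$ remaining pendants even after $u$'s pendants have been removed from $N(v)$. One should also note in passing that $u\ne v$ is needed (implicit in ``two vertices''), as otherwise $f$ could not be injective. No further cases arise, since the description of $\mathbb{T}_{k,2}$ above also covers $k=2$, where the only tree is $P_4$.
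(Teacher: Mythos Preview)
Your proof is correct. The paper does not supply a proof of this proposition at all --- it is introduced as a ``simple'' observation and then used freely throughout Section~3 --- so there is nothing to compare against beyond noting that your explicit argument (characterise $\mathbb{T}_{k,2}$ as the double stars with part sizes $a+b=k+1$, then greedily embed the smaller side at $u$ and the larger side at $v$ using the common neighbour $w^{*}$) is precisely the intended one-line reasoning written out in full.
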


\begin{lemma}
\label{main2,2}
  Let $n\geq m= 2$. If $k\geq 3$, then
    $$ex(n,2;\mathbb{T}_{k,2})=\left\{\begin{array}{ll}
   n+\lceil\frac{k}{2}\rceil-1, & \mbox{for}\ \ n\geq\lfloor\frac{3k}{2}\rfloor-1;\\
  2(k-1),& \mbox{for} \ \  n\leq\lfloor\frac{3k}{2}\rfloor-1.\end{array}\right.$$

  Furthermore, (1).  If $n>\lfloor\frac{3k}{2}\rfloor-1$, then  all extremal graphs for $\mathbb{T}_{k,2}$ are $(n,2)$-bipartite graphs $B_{n,2}=G[U,V;E]$ with  $d(v_1)=n,$ and $d(v_2)=\lceil\frac{k}{2}\rceil-1$, where $V=\{v_1, v_2\}$.

   (2). If $n<\lfloor\frac{3k}{2}\rfloor-1$, then all extremal graphs for $\mathbb{T}_{k,2}$ are $(n,2)$-bipartite graphs $B_{n,2}=G[U,V;E]$ with $d(v_1)=d(v_2)=k-1$, where $V=\{v_1, v_2\}$.

    (3). If $n=\lfloor\frac{3k}{2}\rfloor-1$, then  all extremal graphs for $\mathbb{T}_{k,2}$ are  $(n,2)$-bipartite graphs $B_{n,2}=G[U,V;E]$ with  $d(v_1)=n$ and $d(v_2)=\lceil\frac{k}{2}\rceil-1$, or  $d(v_1)=d(v_2)=k-1$, where $V=\{v_1, v_2\}$.
      \end{lemma}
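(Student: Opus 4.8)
The plan is to reduce the whole question to a statement about the two degrees on the size-$2$ side. Order the two vertices of $V$ so that $d(v_1)\ge d(v_2)$ (the choice is immaterial since $|V|=2$, and this matches the labelling in the statement). \textbf{Step 1 (shape of the trees).} In a tree $T\in\mathbb{T}_{k,2}$ the size-$2$ part is $\{b_1,b_2\}$; each vertex of the $k$-part has degree $1$ or $2$, and exactly one of them, say $a_0$, is joined to both $b_1,b_2$ (fewer would disconnect $T$, more would create a $4$-cycle). Hence $T$ is the path $b_1a_0b_2$ with $p$ pendant vertices attached to $b_1$ and $q$ to $b_2$, $p+q=k-1$; write $T=T_{p,q}$ with $p\le q$. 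Thus $\mathbb{T}_{k,2}=\{T_{p,q}:0\le p\le q,\ p+q=k-1\}$, and the two members that will matter are the balanced tree $T^{\ast}=T_{\lfloor(k-1)/2\rfloor,\lceil(k-1)/2\rceil}$ (in which both $b_1$ and $b_2$ have degree at least $\lceil k/2\rceil$, with $b_1$ of degree exactly $\lceil k/2\rceil$) and the broom $T_{0,k-1}$ (in which $b_2$ has degree $k$).

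\textbf{Step 2 (embedding criterion).} I would prove that for $B_{n,2}=G[U,V;E]$ with $V=\{v_1,v_2\}$, $d(v_1)\ge d(v_2)$, the graph $G$ contains all of $\mathbb{T}_{k,2}$ \emph{iff} $N(v_1)\cap N(v_2)\ne\emptyset$, $d(v_1)\ge k$, and $d(v_2)\ge\lceil k/2\rceil$. For necessity: since $k\ge3$ the $k$-part of any $T_{p,q}$ is too large to fit into $V$, so $\{b_1,b_2\}$ maps onto $\{v_1,v_2\}$ and $a_0$ onto a common neighbour; embedding $T_{0,k-1}$ then forces $d(v_1)\ge k$ and embedding $T^{\ast}$ forces $d(v_2)\ge\lceil k/2\rceil$. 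For sufficiency: put $a_0$ on any common neighbour $w$, then send the $p$ pendant neighbours of $b_1$ into $N(v_2)\setminus\{w\}$ and the $q$ pendant neighbours of $b_2$ into $N(v_1)\setminus\{w\}$; the relevant Hall condition reduces to $p\le d(v_2)-1$, $q\le d(v_1)-1$, and $p+q\le d(v_1)+d(v_2)-|N(v_1)\cap N(v_2)|-1$, all of which follow from the hypotheses (using $|N(v_1)\cap N(v_2)|\le d(v_2)$). For the extremal bound only the contrapositive of necessity is needed, and its ``common neighbour'' part is exactly Proposition~\ref{prop}.

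\textbf{Step 3 (value and extremal graphs).} Let $B_{n,2}$ be $\mathbb{T}_{k,2}$-free. If $d(v_2)\le\lceil k/2\rceil-1$ then $e(B_{n,2})=d(v_1)+d(v_2)\le n+\lceil k/2\rceil-1$; if $d(v_1)\le k-1$ then $e(B_{n,2})\le 2(k-1)$; and if $d(v_1)\ge k$ and $d(v_2)\ge\lceil k/2\rceil$, then Proposition~\ref{prop} makes $N(v_1),N(v_2)$ disjoint subsets of $U$, so $e(B_{n,2})\le n$. Since $k\ge3$ the term $n$ is dominated by $n+\lceil k/2\rceil-1$, and $n+\lceil k/2\rceil-1\ge 2(k-1)\iff n\ge\lfloor 3k/2\rfloor-1$, which produces the two branches of the formula. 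Sharpness: a graph with $d(v_1)=n$, $d(v_2)=\lceil k/2\rceil-1$ misses $T^{\ast}$ and has $n+\lceil k/2\rceil-1$ edges; a graph with $d(v_1)=d(v_2)=k-1$ misses $T_{0,k-1}$ and has $2(k-1)$ edges (this uses $n\ge k-1$, which holds under the hypotheses of Theorem~\ref{main2}). For the characterisation one re-runs the three-way split and tracks equality: for $n>\lfloor 3k/2\rfloor-1$ only the first case can reach the maximum, and equality there forces $d(v_1)=n$, $d(v_2)=\lceil k/2\rceil-1$; for $n<\lfloor 3k/2\rfloor-1$ only the second can, forcing $d(v_1)=d(v_2)=k-1$; for $n=\lfloor 3k/2\rfloor-1$ the two values coincide, both configurations occur, the third case contributes only $n<2(k-1)$ edges, and the two families are disjoint since $k\ge3$. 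This gives (1)--(3).

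\textbf{Main obstacle.} The only genuinely non-routine point is the sufficiency half of Step 2: verifying that the greedy placement of the pendant vertices is never blocked by the shared common neighbours, which is the small Hall-type computation above. Everything else (the tree description in Step 1 and the case analysis in Step 3) is structural bookkeeping, and for the extremal statement itself Proposition~\ref{prop} already supplies the one delicate ingredient needed.
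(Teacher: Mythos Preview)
Your proof is correct and rests on the same structural observation the paper uses: a $B_{n,2}$ contains every tree in $\mathbb{T}_{k,2}$ precisely when $d(v_1)\ge k$, $d(v_2)\ge\lceil k/2\rceil$, and $N(v_1)\cap N(v_2)\ne\emptyset$. The only difference is organizational: you isolate this as an explicit if-and-only-if criterion (your Step~2) and then optimize over its negation, whereas the paper splits first on the three ranges of $n$, takes a graph with one more edge than the claimed bound, and verifies the same three conditions directly inside each case (with Proposition~\ref{prop} playing the role of the ``common neighbour'' clause). Your packaging is slightly cleaner and makes the characterization of extremal graphs fall out of a single equality-tracking pass, while the paper's case-by-case argument avoids stating the Hall-type placement explicitly; mathematically the two are equivalent.
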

 \begin{proof}
  Let $T_{k,2}$ be any $(k,2)$-bipartite tree in $\mathbb{T}_{k,2}$. Without loss of generality, we may assume that  $T_{k,2}=G^{\prime}[U^{\prime}, V^{\prime}; E^{\prime}]$ with $U^{\prime}=\{u_1^{\prime}, \ldots, u_k^{\prime}\}$ and $V^{\prime}=\{v_1^{\prime}, v_2^{\prime}\}$ such that $N(v_1^{\prime})=\{u_1^{\prime}, \ldots, u_r^{\prime}\}$ and
  $N(v_2^{\prime})=\{u_r^{\prime}, \ldots, u_k^{\prime}\}$, where $1\le r\le  \lceil\frac{k}{2}\rceil$. We consider the following three cases.

   {\bf Case 1.} $n>\lfloor\frac{3k}{2}\rfloor-1$. Let $B_{n,2}=G[U,V;E]$ be any $(n,2)$-bipartite graph with $e(B_{n,2})\geq n+\lceil\frac{k}{2}\rceil$, where $V=\{v_1, v_2\}$. Then $l=|N(v_1)\cap N(v_2)|\ge \lceil\frac{k}{2}\rceil$.
  Hence  assume that $N(v_1)=\{u_1, \ldots, u_p, u_{p+1}, \ldots, u_{p+l}\}$ and
$ N(v_2)=\{u_{p+1}, \ldots, u_{p+l}, u_{p+l+1},$ $ \ldots, u_{p+l+q}\}$ with $0\le p\le q.$
Since $$q+l\ge \frac{p+l+q+l}{2}=\frac{e(B_{n,2})}{2}\ge \frac{n+\lceil\frac{k}{2}\rceil}{2}\ge \frac{\lfloor\frac{3k}{2}\rfloor+\lceil\frac{k}{2}\rceil}{2}\ge k,$$ $B_{n,2}$ contains all trees in $\mathbb{T}_{k,2}$ as subgraphs. If $B_{n,2}=G[U, V; E]$ is any $(n,2)$-bipartite graph with  $ d(v_1)=n$ and $d(v_2)=\lceil\frac{k}{2}\rceil-1,$ then $B_{n,2}$ does not contain
all trees in $\mathbb{T}_{k,2}$ as subgraphs, in particular, a tree $T_{k,2}$ with $d(v_1^{\prime})=d(v_2^{\prime})=\lceil\frac{k}{2}\rceil$ when $k$ is odd or $d(v_1^{\prime})=\lceil\frac{k}{2}\rceil+1,d(v_2^{\prime})=\lceil\frac{k}{2}\rceil$ when $k$ is even. Hence  $ex(n,2;\mathbb{T}_{k,2})=   n+\lceil\frac{k}{2}\rceil-1.$  Now assume that an $(n,2)$-bipartite graph $B_{n,2}=G[U, V; E]$  with $n+\lceil\frac{k}{2}\rceil-1 $ edges  does not contain all trees in $\mathbb{T}_{k,2}$ as subgraphs. It is easy to see that $V=\{v_1,v_2\}$ with $ d(v_1)=n$ and $d(v_2)=\lceil\frac{k}{2}\rceil-1.$

{\bf Case 2.} $n<\lfloor\frac{3k}{2}\rfloor-1$. Let $B_{n,2}=G[U,V; E]$ be any $(n,2)$-bipartite graph such that $e(B_{n,2})\geq 2(k-1)+1$ with $V=\{v_1, v_2\}$ and $d(v_1)\ge d(v_2)$. Then $d(v_{1})\geq k$ and $d(v_{2})\geq 2(k-1)+1-d(v_2)\ge 2(k-1)+1-n\geq\lceil\frac{k}{2}\rceil$ by $n<\lfloor\frac{3k}{2}\rfloor-1$. Clearly $N(v_{1})\cap N(v_{2})\neq \emptyset$.  Hence $B_{n,2}$  contains all trees in $\mathbb{T}_{k,2}$ as subgraphs. It is easy to see that all extremal graphs are $(n,2)$-bipartite graphs $B_{n,2}=G[U, V; E]$ with $d(v_{1})=d(v_{2})=k-1$, where $V=\{v_{1},v_{2}\}$.

{\bf Case 3.} $n=\lfloor\frac{3k}{2}\rfloor-1$. Then $n+\lceil\frac{k}{2}\rceil-1=2(k-1)$. Let $B_{n,2}=G[U,V; E]$ be any $(n,2)$-bipartite graph with $e(B_{n,2})\geq 2(k-1)+1$. Similarly as Cases 1 and 2, it is easy to see that
$ex(n,2; \mathbb{T}_{k,2})=n+\lceil\frac{k}{2}\rceil-1$ and all extremal graphs are
$(n,2$)-bipartite graphs with  $d(v_1)=n,$ and $d(v_2)=\lceil\frac{k}{2}\rceil-1$, or  $d(v_1)=d(v_2)=k-1$, where $V=\{v_1, v_2\}$.
  \end{proof}

\begin{lemma}
\label{main2,3}
   Let $n\geq m$ and $k\geq m\ge 3$. Then
     \[ex(n,m;\mathbb{T}_{k,2})=\left\{\begin{array}{ll}(m-2)(k-1)+n, & \mbox{for}\ n\geq 2k-1;\\
   (k-1)m, & \mbox{for} \ n<2k-1. \end{array}\right.\]
    Moreover,
     (1). If $n\geq2k-1$, then all  extremal graphs for $\mathbb{T}_{k,2}$ are  $K_{k-1,m-1}\cup K_{n-k+1, 1}$;
           or $B_{n,3}=G[U, V;E]$ with $d(v_1)=d(v_2)=\lceil\frac{k}{2}\rceil-1$ and $d(v_3)=n$, where $k\ge 5$ is odd and $V=\{v_1, v_2, v_3\}$; or $k=3$ and $B_{n,3}=S_{n,3}$.\\
            (2). If $n<2k-1$,
     then the all  extremal graphs for $\mathbb{T}_{k,2}$ are all $(n,m)$-bipartite graphs $B_{n,m}=G[U, V;E]$ with $d(u_{i})\leq k-1$ for $i=1,2,\ldots,n$ and  $d(v_{j})=k-1$ for $j=1,2,\ldots,m,$ where  $U=\{u_1, \ldots, u_n\}$ and $V=\{v_1, \ldots, v_m\}$; or $B_{n,m}=G[U, V;E]$ with $n=2k-2,m=3$, $d(v_1)=d(v_2)=\lceil\frac{k}{2}\rceil-1$, $d(v_3)=2k-2$ and  $k$ being odd, where $V=\{v_1, v_2, v_3\}$.
    \end{lemma}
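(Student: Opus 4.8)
The plan is to bound $e(B)$ for a bigraph $B=B_{n,m}=G[U,V;E]$ that fails to contain every tree of $\mathbb{T}_{k,2}$, exhibit matching constructions, and then read off all extremal graphs from the cases of equality. Order $V=\{v_1,\dots,v_m\}$ so that $d(v_1)\le\cdots\le d(v_m)$, and (as in the standing hypothesis of Theorem~\ref{main2}) assume $n\ge k$; if $n=k-1$ then $m\le k-1$, no $T_{k,2}$ embeds, and the claim is immediate. The only tool needed is Proposition~\ref{prop}: under our hypothesis on $B$, any two vertices lying in a common part, one of degree $\ge\lceil k/2\rceil$ and the other of degree $\ge k$, have disjoint neighbourhoods. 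Since $e(B)=\sum_j d(v_j)$, I apply it on the part $V$; the part $U$ enters only in the boundary subcase $m=k$.

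\textbf{Upper bound.} If $d(v_m)\le k-1$ then $e(B)\le m(k-1)$. Otherwise put $H=\{v\in V:d(v)\ge k\}\neq\emptyset$, $M=\{v\in V:\lceil k/2\rceil\le d(v)\le k-1\}$ and $L=\{v\in V:d(v)\le\lceil k/2\rceil-1\}$. Proposition~\ref{prop} makes the sets $N(v)$, $v\in H$, pairwise disjoint, so $D_H:=\sum_{v\in H}d(v)\le n$; and each $v\in M$ has $N(v)$ disjoint from $\bigcup_{u\in H}N(u)$, whence $d(v)\le n-D_H$. If $M=\emptyset$ then, using $\lceil k/2\rceil-1\le(k-1)/2$ and $(m-1)/2\le m-2$ for $m\ge3$,
\[ e(B)=D_H+\textstyle\sum_{v\in L}d(v)\le n+(m-1)\bigl(\lceil k/2\rceil-1\bigr)\le n+(m-2)(k-1). \]
If $M\neq\emptyset$, choose $v_0\in M$ of largest degree; then $d(v_0)\le k-1$, $D_H\le n-d(v_0)$, $\sum_{v\in M}d(v)\le |M|\,d(v_0)$, and so
\begin{align*}
 e(B) &\le\bigl(n-d(v_0)\bigr)+|M|\,d(v_0)+|L|\bigl(\lceil k/2\rceil-1\bigr)\\
 &\le n+\bigl(|M|+|L|-1\bigr)(k-1)=n+(m-|H|-1)(k-1)\le n+(m-2)(k-1).
\end{align*}
Hence $e(B)\le\max\{m(k-1),(m-2)(k-1)+n\}$, which equals $(m-2)(k-1)+n$ for $n\ge 2k-1$ and $m(k-1)$ for $n\le 2k-2$, since $(m-2)(k-1)+n\ge m(k-1)\iff n\ge 2(k-1)$.

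\textbf{Constructions and extremal graphs.} For $n\le2k-2$, any $(n,m)$-bigraph with $d(v_j)=k-1$ for all $j$ and $d(u_i)\le k-1$ for all $i$ has $m(k-1)$ edges (such graphs exist because $m(k-1)\le n(k-1)$ and $k-1<n$) and omits the broom obtained from $K_{1,k}$ by a pendant edge at a leaf, which needs a degree-$k$ vertex. For $n\ge2k-1$, $K_{k-1,m-1}\cup K_{n-k+1,1}$ has $(m-2)(k-1)+n$ edges and, being a complete bipartite graph with both sides $<k$ together with a star, contains no tree of $\mathbb{T}_{k,2}$ at all; and when $m=3$, $k$ odd, the bigraph with $d(v_3)=n$, $d(v_1)=d(v_2)=\lceil k/2\rceil-1$ also has $(m-2)(k-1)+n$ edges and omits the tree with $d(v_1')=d(v_2')=(k+1)/2$ (only $v_3$ attains that degree, and for $k\ge5$ the $k$-side of the tree is too large for $V$); for $k=3$ this requires $v_1,v_2$ to share a neighbour, giving exactly $S_{n,3}$. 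To show these are all the extremal graphs, force equality in the displays above. The case $d(v_m)\ge k$, $M\neq\emptyset$ forces $|H|=1$, $|L|=0$, every degree in $M$ equal to $k-1$ and $D_H=n-(k-1)$, hence $B=K_{k-1,m-1}\cup K_{n-k+1,1}$. The case $M=\emptyset$ forces $d(v_m)=n$, $|L|=m-1$ with each low degree $=\lceil k/2\rceil-1$, and $(m-1)(\lceil k/2\rceil-1)=(m-2)(k-1)$, which holds only for $m=3$, $k$ odd; among those tight graphs one keeps only the ones genuinely omitting a tree, producing the $m=3$, $k\ge5$ odd family and, for $k=3$, $S_{n,3}$. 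The case $d(v_m)\le k-1$ forces $d(v_j)=k-1$ for all $j$, and a short argument (Proposition~\ref{prop} on $U$ when $m=k$, and $d(u_i)\le m\le k-1$ otherwise) forces $d(u_i)\le k-1$ for all $i$. At the threshold $n=2k-2$ the two formulas coincide and the equality chains from the $d(v_m)\ge k$ cases also fire, yielding the sporadic $m=3$, $k$ odd graph recorded in the statement.

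\textbf{Where the difficulty lies.} Granting Proposition~\ref{prop}, the edge bound is a short computation with no induction. The real work is the characterisation: one must push through all the equality conditions simultaneously and, above all, rule out graphs that meet the edge bound yet still contain every $T_{k,2}$. That last step is exactly what separates $k=3$ (only the double star $S_{n,3}$ survives) from $k\ge5$ odd (a one-parameter family survives), and what produces the extra extremal graph at $n=2k-2$. The subcase $m=k$, which needs a second application of Proposition~\ref{prop} on the $U$-side, is the only point at which the hypothesis $m\le k$ is used essentially.
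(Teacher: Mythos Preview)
Your argument is correct and rests on the same engine as the paper's proof (Proposition~\ref{prop} applied on the $V$-side), but your packaging is cleaner. The paper splits upfront into $n\ge 2k-1$ versus $n<2k-1$, introduces the auxiliary quantity $s=\max\{\,n-\sum_{i>m-l}d(v_i),\,\lceil k/2\rceil-1\,\}$, and then runs through several subcases on $s$, on $l=|H|$, and on $d(v_{m-1})$. Your $H/M/L$ trichotomy together with the ``pick $v_0\in M$ of largest degree'' trick collapses all of that into the single chain
\[
e(B)\le (n-d(v_0))+|M|\,d(v_0)+|L|\bigl(\lceil k/2\rceil-1\bigr)\le n+(m-|H|-1)(k-1)\le n+(m-2)(k-1),
\]
valid uniformly in $n$; the two regimes then fall out of comparing this with $m(k-1)$. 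Reading off the equality conditions ($|H|=1$, $|L|=0$, $d(v_0)=k-1$, $D_H=n-(k-1)$ in the $M\neq\emptyset$ branch; $|H|=1$, $(m-1)(\lceil k/2\rceil-1)=(m-2)(k-1)$ forcing $m=3$, $k$ odd in the $M=\emptyset$ branch) then recovers the paper's extremal graphs with noticeably less casework.

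The one place your write-up is thinner than the paper's is the ``second embedding direction'' when $m=k$ (and in particular $k=3$): the paper spells out that the broom, respectively $P_5$, embeds with the $2$-side of the tree mapped into $U$, which is what forces $d(u_i)\le k-1$ in the regular case and singles out $S_{n,3}$ among all graphs with $d(v_3)=n$, $d(v_1)=d(v_2)=1$. Your parenthetical ``Proposition~\ref{prop} on $U$ when $m=k$'' and ``requires $v_1,v_2$ to share a neighbour'' point at the right mechanism, but these steps deserve a sentence each since they are where the hypothesis $m\le k$ actually bites.
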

 \begin{proof}
 (1). Let $n\geq2k-1$. Let $B_{n,m}=G[U,V;E]$ be an $(n,m)$-bipartite graph with $$ e(B_{n,m})=(m-2)(k-1)+n$$ that does not contain all trees in $\mathbb{T}_{k,2}$ as subgraphs.
  Let $U=\{u_{1},\ldots,u_{n}\}$ and $V=\{v_{1},\ldots,v_{m}\}$ with $d(u_{1})\leq\ldots\leq d(u_{n})$ and $d(v_{1})\leq\ldots\leq d(v_{m})$. Since
  $e(B_{n,m})=(m-2)(k-1)+n$ and $n\geq2k-1$, we have $d(v_m)>k-1$. Hence, let $v_{m-l+1},\ldots,v_{m}$ be the vertices in $V$ with degree more than $k-1$.
  Then $N(v_i)\cap N(v_j)=\emptyset$ for $m-l+1\le i\neq j\le m$. Otherwise $B_{n,m}$ contains all trees in $\mathbb{T}_{k,2}$ as subgraphs. So $$\left|\cup_{i=m-l+1}^mN(v_i)\right|=\sum_{i=m-l+1}^{m}d(v_{i}).$$
  Let $$s=max\{n-\sum_{i=m-l+1}^{m}d(v_{i}),\lceil\frac{k}{2}\rceil-1\}.$$ Then
  $\lceil\frac{k}{2}\rceil-1\le s\leq k-1$, otherwise, $$e(B_{n,m})\le n-s+(k-1)(m-l)<(m-2)(k-1)+n,$$ which is a contradiction.
  Furthermore, we have $d(v_{m-l})\le s$, otherwise $$d(v_{m-l})+\sum_{i=m-l+1}^{m}d(v_{i})>n,$$ which implies that there exists a vertex $v_p$  with $m-l+1\le p\le m$ such that $|N(v_{m-l})\cap N(v_p)|\ge 1$. So $B_{n,m}$ contains all trees in $\mathbb{T}_{k,2}$ as subgraphs. On the other hand, we have $$n+(m-2)(k-1)=e(B_{n,m})\leq \sum_{i=m-l+1}^{m}d(v_{i})+(m-l)s\le n +(m-l)(k-1),$$ hence $l=1$. Otherwise, we have $l=2$ with $s=k-1$ and $\sum_{i=m-l+1}^{m}d(v_{i})=n$, which implies that $B_{n,m}$ contains all trees in $\mathbb{T}_{k,2}$ as subgraphs, a contradiction. In addition, if $d(v_{m})<n-(k-1)$, then $e(B_{n,m})\le d(v_m)+(m-1)s< n+(m-2)(k-1)$, which is a contradiction. Furthermore, we claim $d(v_m)=n-k+1$ for $m\ge 4$; and $d(v_m)=n-k+1$ or $d(v_m)=n$ for $m=3$. In fact, if
  $d(v_{m})>n-(k-1)$, then $d(v_{m-1})\le k-2$. Otherwise $v_{m-1}$ and $v_{m}$ has a common neighbor which implies that $B_{n,m}$ contains all trees in $\mathbb{T}_{k,2}$ as subgraphs.
   Moreover, for $m\geq 4,$ we have $d(v_{m-1})\ge \lceil\frac{k}{2}\rceil$. Otherwise $$e(B_{n,m})\le d(v_m)+(m-1)d(v_{m-1})\le n+(m-1)(\lceil\frac{k}{2}\rceil-1)<n+(m-2)(k-1),$$ which is a contradiction. Then $v_{m-1}$ and $v_m$ do not have a common neighbor, otherwise $B_{n,m}$ contains all trees in $\mathbb{T}_{k,2}$ as subgraphs. So $$e(B_{n,m})\le d(v_m)+d(v_{m-1})+(m-2)(k-2)<n+(m-2)(k-1),$$ which is a contradiction. Hence $d(v_{m})=n-k+1$.
    On the other hand, $$n+(m-2)(k-1)\le d(v_m)+(m-1)(k-1)=n+(m-2)(k-1)$$ implies $d(v_1)=\ldots=d(v_{m-1})=k-1$. Moreover, $v_i$ and $v_m$ do not have a common neighbor for $i=1, \ldots, m-1.$ So $$B_{n,m}= K_{k-1,m-1}\cup K_{n-k+1, 1}.$$
If $m=3$ and $d(v_2)\ge \lceil\frac{k}{2}\rceil,$
  then $v_2$ and $v_3$ have no common neighbour. By $$n+(k-1)=e(B_{n,m})=d(v_1)+d(v_2)+d(v_3)\le d(v_1)+n\le n+(k-1),$$ we have $d(v_1)=d(v_2)=k-1, d(v_3)=n-k+1$ (If $d(v_2)\geq k$, then $N(v_1)\cap N(v_2)\neq\emptyset$ or $N(v_1)\cap N(v_3)\neq\emptyset$, contradicting Proposition~\ref{prop}), which implies $$B_{n,m}=K_{n-k+1,1}\cup K_{k-1, m-1}.$$
   If $m=3$ and $d(v_2)\le \lceil\frac{k}{2}\rceil-1$, then by $$n+(k-1)= e(B_{n,m})=d(v_1)+d(v_2)+d(v_3)\le n+2(\lceil\frac{k}{2}\rceil-1),$$ we have $d(v_1)=d(v_2)=\lceil\frac{k}{2}\rceil-1$, $d(v_3)=n$, and $k$ is odd.
    The assertion holds by a simple observation (recall the definition of ``strongly containing'', there are two kinds of possible embeddings considering the partition sets of $T_{k,l}$ and $B_{n,m}$ when $k\leq m$).

(2). Let $n<2k-1$. Let  $B_{n,m}=G[U, V;E]$ be an $(n,m)$-bipartite graph $B_{n,m}=G[U, V;E]$ with  $e(B_{n,m})=(k-1)m$  which does not contain all trees in $\mathbb{T}_{k,2}$ as subgraphs, where  $U=\{u_1, \ldots, u_n\}$ and $V=\{v_1, \ldots, v_m\}$ with $d(v_1)\le \ldots\le d(v_m)$.  We consider the following two cases.

 {\bf Case 1.}  $d(v_m)\ge k.$  Let $$s^{\prime}= max\{n-d(v_m),\lceil\frac{k}{2}\rceil-1\}.$$ Then $d(v_{m-1})\leq s^{\prime}$. Otherwise  $d(v_{m-1})\geq  s^{\prime}+1$, which implies that $d(v_{m-1})+d(v_m)\ge n+1$. So $v_{m-1}$ and $v_{m}$ have at least one common neighbor and $B_{n,m}$ contains all trees in $\mathbb{T}_{k,2}$ as subgraphs, which is a contradiction. Furthermore, we claim
  $s^{\prime}= \lceil\frac{k}{2}\rceil-1$. In fact, if $s^{\prime}=n-d(v_m)\geq\lceil\frac{k}{2}\rceil$, we have
 $$ \begin{array}{lll}
  e(B_{n,m})&\leq &d(v_m)+(m-1)s^{\prime}=d(v_m)+(m-1)(n-d(v_m))\\&\le& k+(m-1)(n-k)
\le k+(m-1)(2k-2-k)<(k-1)m
\end{array}$$
 by $n\le 2k-2$ and $k\geq m\geq 3$, which is a contradiction.
 Hence
 $$(k-1)m=e(B_{n,m})\leq d(v_m)+(m-1)s^{\prime}\le n+ (m-1)(
\lceil\frac{k}{2}\rceil-1)\le 2k-2+(m-1)(\lceil\frac{k}{2}\rceil-1),$$
which implies that
$$m(k-\lceil\frac{k}{2}\rceil)\le 2k-\lceil\frac{k}{2}\rceil-1.$$
Therefore $m=3$, otherwise $m\ge 4$, i.e., $4 (k-\lceil\frac{k}{2}\rceil)\le 2k-\lceil\frac{k}{2}\rceil-1$ which implies $2k+1\le 3\lceil\frac{k}{2}\rceil$, a contradiction. Furthermore, by $m=3$, we have $k+1\le 2\lceil\frac{k}{2}\rceil$ which implies that $k$ is odd.
In addition, by $3(k-1)\le n+2( \lceil\frac{k}{2}\rceil-1)$, we have $n\ge 2k-2$. So $n=2k-2$. Moreover, it is to see that
 $d(v_1)=d(v_2)=\lceil\frac{k}{2}\rceil-1$ and $ d(v_3)=n$.
  If $ k\geq5$, then we are done. If $k=3$, the assertion hold by a simple observation (recall the definition of ``strongly containing'', there are two kinds of possible embeddings considering the partition sets of $T_{k,l}$ and $B_{n,m}$ when $k\leq m$).

{\bf Case 2.}  $d(v_m)\le k-1.$ Then  $(k-1)m=e(B_{n,m})\le (k-1)m$, which implies $d(v_1)=\ldots=d(v_m)=k-1$. By $n\le 2k-2$ and $m\ge 3$, $B_{n,m}$ contains all trees in $\mathbb{T}_{k,2}$ as subgraphs except the tree with one vertex of degree $k$. Hence $d(u_i)\le k-1$ for $i=1, \ldots, n$.
   \end{proof}

\begin{lemma}
\label{n,m}
  (1) Let $n>m\ge 3.$  If $ 3\le k\le m+1$, then  any $(n,m)$-bipartite graph $B_{n,m}=G[U,V;E]$  with $e(B_{n,m})\geq (k-1)n$ contains all trees in $\mathbb{T}_{k,2}$ as subgraphs.\\
   (2) Let  $n=m\geq k-1$, $m\ge 3$ and $k\ge 3$. Then  $
   ex(n,n; \mathbb{T}_{k,2})=(k-1)n$ and all extremal graphs for $\mathbb{T}_{k,2}$ are $(k-1)$-regular bipartite graphs.
 \end{lemma}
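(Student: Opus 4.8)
The engine of the whole proof is the converse direction of Proposition~\ref{prop}: if $x,y$ lie in the same partite class of $B_{n,m}$ with $d(x)\ge k$, $d(y)\ge\lceil k/2\rceil$ and $N(x)\cap N(y)\ne\emptyset$, then $B_{n,m}$ contains every tree in $\mathbb T_{k,2}$. Indeed, a tree $T\in\mathbb T_{k,2}$ with classes $U'=\{u_1',\dots,u_k'\}$, $V'=\{v_1',v_2'\}$ must (by connectedness and acyclicity of $T$) have $N_T(v_1')=\{u_1',\dots,u_r'\}$ and $N_T(v_2')=\{u_r',\dots,u_k'\}$ for a unique common neighbour $u_r'$ and some $1\le r\le\lceil k/2\rceil$. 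Map $v_2'\mapsto x$, $v_1'\mapsto y$, $u_r'$ to a common neighbour $w$ of $x,y$, the remaining $r-1$ neighbours of $v_1'$ to distinct vertices of $N(y)\setminus\{w\}$ (possible since $d(y)-1\ge\lceil k/2\rceil-1\ge r-1$), and the remaining $k-r$ neighbours of $v_2'$ to distinct as-yet-unused vertices of $N(x)\setminus\{w\}$ (possible since $d(x)-1\ge k-1\ge(r-1)+(k-r)$). So for both parts it suffices to exhibit such a pair $x,y$ in $U$ or in $V$.

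\textbf{Part (1).} Suppose $e(B_{n,m})\ge(k-1)n$ but $B_{n,m}$ does not contain all trees of $\mathbb T_{k,2}$. Since $n>m$, $e(B_{n,m})\ge(k-1)n>(k-1)m$, so the largest $V$-degree satisfies $d(v_m)\ge k$. Put $S=\{v\in V:d(v)\ge k\}\ni v_m$ and $A=\{v\in V:d(v)\ge\lceil k/2\rceil\}\supseteq S$. By Proposition~\ref{prop} (no good pair in $V$), the sets $N(v)$, $v\in S$, are pairwise disjoint and each is disjoint from $N(v')$ for every $v'\in A\setminus S$; hence
\[\sum_{v\in S}d(v)+\Bigl|\bigcup\nolimits_{v'\in A\setminus S}N(v')\Bigr|\le n,\]
so $\sum_{v\in S}d(v)\le n$ and $d(v')\le n-\sum_{v\in S}d(v)$ for each $v'\in A\setminus S$. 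Writing $e(B_{n,m})=\sum_{v\in S}d(v)+\sum_{v'\in A\setminus S}d(v')+\sum_{v\in V\setminus A}d(v)$ and using $d\le k-1$ on $A\setminus S$, $d\le\lceil k/2\rceil-1$ on $V\setminus A$, and the displayed inequality, a case analysis on $|S|$, on $|A|$, and on $|\bigcup_{v'\in A\setminus S}N(v')|$ forces $e(B_{n,m})<(k-1)n$, a contradiction. The one delicate configuration — few vertices of degree $\ge k$ but many of degree in $[\lceil k/2\rceil,k-1]$ packed onto a small common neighbourhood in $U$ — is handled by running the same argument on the $U$-side (again via Proposition~\ref{prop}, valid since then $k\le m$). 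This degree/packing estimate is the real obstacle; everything else is bookkeeping, and it is here that the hypothesis $n>m$ (i.e.\ $n\ge m+1$) is used to absorb the $|V\setminus A|$ term.

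\textbf{Part (2).} For the lower bound, any $(k-1)$-regular $(n,n)$-bipartite graph exists (as $1\le k-1\le n$), has $(k-1)n$ edges, and has maximum degree $k-1<k$, so it contains no tree of $\mathbb T_{k,2}$ in which a class-vertex has degree $k$; thus $ex(n,n;\mathbb T_{k,2})\ge(k-1)n$. For the upper bound and the characterisation it suffices to prove: if $e(B_{n,n})\ge(k-1)n$ and $B_{n,n}$ is \emph{not} $(k-1)$-regular, then $B_{n,n}$ contains all of $\mathbb T_{k,2}$. When $k=n+1$ this is vacuous, since then $(k-1)n=n^2$ is attained only by $K_{n,n}$, which is $(k-1)$-regular. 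Otherwise $k\le n$, and if $B_{n,n}$ is not $(k-1)$-regular then, since $\sum_{v\in V}d(v)=\sum_{u\in U}d(u)=e(B_{n,n})\ge(k-1)n$, every degree $\le k-1$ would force $(k-1)$-regularity, so some vertex has degree $\ge k$. One then adapts the Part~(1) analysis, now carried out on both partite classes simultaneously (applying Proposition~\ref{prop} to each), to show that a degree-$\ge k$ vertex together with the remaining edges cannot avoid sharing a neighbour with a vertex of degree $\ge\lceil k/2\rceil$ in its class; this yields a good pair and hence all of $\mathbb T_{k,2}$. Consequently every extremal graph (with $e=(k-1)n$) that misses some tree of $\mathbb T_{k,2}$ must be $(k-1)$-regular, completing the proof. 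As in Part~(1), the crux is the degree/packing estimate, which for $m=n$ must be balanced against both sides at once.
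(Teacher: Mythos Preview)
Your proposal correctly identifies Proposition~\ref{prop} as the engine and sets up the right dichotomy (find a ``good pair'' or bound the edge count), but the actual proof is missing. In Part~(1) you write that ``a case analysis on $|S|$, on $|A|$, and on $|\bigcup_{v'\in A\setminus S}N(v')|$ forces $e(B_{n,m})<(k-1)n$'' and that the ``delicate configuration'' is ``handled by running the same argument on the $U$-side'' --- but neither step is carried out. This is precisely the substance of the lemma: for instance, with $k=3$, $m=5$, $n=6$ one can have $|S|=1$, $d(v_5)=4$, and four $V$-vertices of degree $2$ all mapped into the two remaining $U$-vertices, giving $e=12=(k-1)n$; your $V$-side bound does \emph{not} yield a contradiction here, and you must actually argue why the $U$-side then produces a good pair (it does, because those two $U$-vertices now have degree $4$ and share a neighbour --- but this needs to be said, and it is not immediate that the $U$-side cannot itself be ``delicate''). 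You also do not address why the switch to the $U$-side is always available: the hypothesis allows $k=m+1$, in which case embedding $v_1',v_2'$ into $U$ is impossible (you need $m\ge k$), so that boundary case must be treated separately. Part~(2) has the same issue: ``one then adapts the Part~(1) analysis'' is an outline, not an argument, and here the two sides really must be played off against each other.

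For comparison, the paper proves Part~(1) by induction on $m$, working on the $U$-side rather than $V$: it partitions $U$ by degree (below $k-1$, equal to $k-1$, above $k-1$), and when no good pair exists in $U$ it deletes the low-degree $U$-vertices together with some neighbours of the high-degree $U$-vertices to drop to a smaller bipartite graph with all $U$-degrees equal to $k-1$, to which the inductive hypothesis applies. Part~(2) is then proved by a three-case analysis on $n-\sum d(u_i)$ over the high-degree $U$-vertices, with Case~1 invoking Part~(1) on an induced subgraph. So the paper's argument is genuinely recursive, and this recursion is exactly what absorbs the ``delicate configuration'' you allude to. Your direct packing approach may well be completable, but as written it defers the entire difficulty to an unperformed case analysis.
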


 \begin{proof} (1).
  We prove the assertion holds by induction on $m$. If $m=k-1$, then the degree of each vertex in $V$ is  $n$. Hence $B_{n,m}$ contains all trees in $\mathbb{T}_{k,2}$ as subgraphs.   Assume that the assertion holds for less than $m$.
  Let $U=\{u_1, \ldots, u_n\}$ with $d(u_1)\le \ldots\le d(u_p)<k-1$, $d(u_{p+1})=\ldots=d(u_{p+q})=k-1$ and $k-1<d(u_{p+q+1})\le \ldots\le d(u_{p+q+r})$, where $p, q, r\ge 0$ and $p+q+r=n$.
  If $r=0$, then $p=0$ and $q=n$ by $e(B_{n,m})\geq(k-1)n$. Moreover, there is a vertex in $V$ with degree at least $k$ by $n>m$, otherwise $$e(B_{n,m})\le m(k-1)<n(k-1).$$ Hence $B_{n,m}$ contains all trees in $\mathbb{T}_{k,2}$ as subgraphs. So we may assume that $r>0$  and $d(u_{i})=k-1+x_{i} $ for $i=p+q+1, \ldots, n$ where $x_{i}\ge 1$. Furthermore, if there are two vertices of $\{u_{p+q+1},\ldots,u_{n}\}$ which share the same neighbor or one vertex of $\{u_{p+1}, \ldots, u_{p+q}\}$ and one vertex of $\{u_{p+q+1},\ldots,u_{n}\}$  which share the same neighbor, then $B_{n,m}$  contains all trees in $\mathbb{T}_{k,2}$ and the assertion holds. So assume that $$N_{B_{n,m}}(u_{i})\cap N_{B_{n,m}}(u_{j})=\emptyset$$ for $i,j=p+q+1,\ldots,n, i\neq j$, or $i=p+1, \ldots, p+q$ and $j=p+q+1, \ldots, n$. On the other hand, $$(k-1)n=e(B_{n,m})=\sum_{i=1}^nd(u_i)
  \le p(k-2)+q(k-1)+r(k-1)+s=(k-1)n+s-p,$$ where $s=\sum_{i=p+q+1}^nx_i$. So $p\le s$.
Let $$B_{n-p, m-s}=B_{n,m}-\{u_{1}, \ldots, u_{p},v_{p+q+1,1},\ldots,v_{p+q+1,x_{p+q+1}},\ldots,v_{n,1},\ldots,v_{n,x_{n}}\}$$ with $$U_{n-p}=U-\{u_{1},\ldots,u_{p}\},V_{n-s}=V-\{v_{p+q+1,1},\ldots,v_{p+q+1,x_{p+q+1}},\ldots,v_{n,1},\ldots,v_{n,x_{n}}\},$$ where $\{v_{i,1}, \ldots, v_{i, x_i}\}\subseteq N(u_i)$ for $i=p+q+1, \ldots, n$.
Hence $e(B_{n-p,m-s})=(k-1)(n-p)$ and all the vertices of $U_{n-p}$ have degree $k-1$ in $B_{n-p,m-s}$. Note that $$(k-1)(n-p)= e(B_{n-p,m-s}) \le (m-s)(n-p)$$ which implies that $m-s\ge k-1$. Therefore by the induction hypothesis, $B_{n-p,m-s}$ contains all trees in  $\mathbb{T}_{k,2}$ as subgraphs.

(2). It is sufficient to prove that any non-regular bipartite graph $B_{n,n}$  with $e(B_{n,n})\geq(k-1)n$ contains all trees in $\mathbb{T}_{k,2}$ as subgraphs. If $n=m=k-1$, it is trivial. If $n=m\geq k$, then there exists a vertex with degree at least $k$. Suppose that $B_{n,n}$ does not contain all trees in $\mathbb{T}_{k,2}$ as subgraphs. Let $U=\{u_1, \ldots, u_n\}$ with $d(u_1)\le \ldots\le d(u_p)<k-1$, $d(u_{p+1})=\ldots=d(u_{p+q})=k-1$ and $k-1<d(u_{p+q+1})\le \ldots\le d(u_{p+q+r})$, where $p\ge0, q\ge 0, r\ge 1$ and $p+q+r=n$. Recall that the vertices with degree more than $\lceil\frac{k}{2}\rceil-1$ can not share a common neighbor with the vertices with degree more than $k-1$, and hence we can consider the following three cases which are based on the number of neighbors of $\{u_{p+q+1},\ldots,u_{p+q+r}\}$.

{\bf Case 1.} $n-\sum_{i=p+q+1}^{n}d(u_{i})\geq k-1$.  There are at most $n-\sum_{i=p+q+1}^{n}d(u_{i})$ vertices in $U$ with degree $k-1$. Otherwise, the induced subgraph of $B_{n,n}$ with vertex sets $\{u_{p+1},\ldots,u_{p+q}\}$ and $V\setminus \cup_{i=p+q+1}^n N(u_i)$ satisfies (1). Hence $B_{n,n}$ will contain all trees in $\mathbb{T}_{k,2}$ as subgraphs. Therefore $$\begin{array}{lll}e(B_{n,n})&\leq & \sum_{i=p+q+1}^{n}d(u_{i})+(k-1)(n-\sum^{n}_{i=p+q+1}d(u_{i}))
+(k-2)(\sum^{n}_{i=p+q+1}d(u_{i})-r)\\&=&(k-1)(n-r)+r<(k-1)n.\end{array}$$

{\bf Case 2.} $\lceil\frac{k}{2}\rceil\leq n-\sum^{n}_{i=p+q+1}d(u_{i})\leq k-2$. Since $N(u_i)\cap N(u_j)=\emptyset$ for $p+q+1\le i<j\le p+q+r$, we have
\begin{eqnarray*}e(B_{n,n})&\leq&\sum^{n}_{i=p+q+1}d(u_{i})+(n-r)(n-\sum^{n}_{i=p+q+1}d(u_{i}))\\&=&n+(n-r-1)(n-\sum^{n}_{i=p+q+1}d(u_{i}))\\&\leq& n+(n-2)(k-2)\\
&<&(k-1)n.
\end{eqnarray*}

 {\bf Case 3.} $n-\sum^{n}_{i=p+q+1}d(u_{i})\leq\lceil\frac{k}{2}\rceil-1$. We have $$e(B_{n,n})\leq n+(n-r)(\lceil\frac{k}{2}\rceil-1)\leq n+(n-1)(k-2)<(k-1)n.$$
 All of the above three cases contradict $e(B_{n,n})\geq (k-1)n$.
 \end{proof}

\begin{lemma}
\label{main2,4}
  Let $n\geq m\geq k+1\ge 4.$ Then

   $$ex(n,m;\mathbb{T}_{k,2})=\left\{\begin{array}{ll}
   (k-2)(m-1)+n, &  \mbox{for}\ \  n-m\geq k-1;\\
   (k-1)m, & \mbox{for}\ \  n-m<k-1. \end{array}\right.
   $$
   Furthermore, (1). If $n-m\geq k-1$, all extremal graphs for $\mathbb{T}_{k,2}$ are $B_{n,m}=B_{m-1,m-1}^{k-1}\cup K_{1, n-m+1}$, where $B_{m-1,m-1}^{k-1}$ is a $(k-1)$-regular  bipartite graph or $B_{n,m}=S_{n,m}$ when $k=3$.\\
(2).  If $n-m<k-1$, all extremal graphs for $\mathbb{T}_{k,2}$ are $B_{n,m} $ with  $d(u_{i})\leq k-1$ and $d(v_{j})=k-1$ for $i=1,2,\ldots,n;j=1,2,\ldots,m$; or $B_{m+1,m}=S_{m+1,m}$ when $k=3$.
 \end{lemma}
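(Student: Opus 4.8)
The plan is to mirror the proofs of Lemmas~\ref{main2,3} and \ref{n,m}: record the two extremal constructions, then obtain the matching upper bound by peeling off the vertices of $V$ of degree at least $k$ --- whose neighbourhoods are pairwise disjoint by Proposition~\ref{prop} --- and applying Lemmas~\ref{n,m} and \ref{main2,3} to the bipartite graph that remains.

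\textit{Constructions.} If $n-m\ge k-1$, let $G=B_{m-1,m-1}^{k-1}\cup K_{1,n-m+1}$, where $B_{m-1,m-1}^{k-1}$ is a $(k-1)$-regular bipartite graph with both parts of size $m-1$ (which exists, as $m-1\ge k-1$) and the star has its centre in $V$. Then $e(G)=(k-1)(m-1)+(n-m+1)=(k-2)(m-1)+n$, and $G$ misses the tree $T^{*}\in\mathbb{T}_{k,2}$ obtained from $K_{1,k}$ by subdividing one edge (its two vertices on the $2$-side have degrees $1$ and $k$): $T^{*}$ is connected, so it would live in one component, but $B_{m-1,m-1}^{k-1}$ has maximum degree $k-1<k$, and in $K_{1,n-m+1}$ the only vertex of degree $\ge k$ is the centre, whose neighbours all have degree $1$, so the pendant edge of $T^{*}$ cannot be placed. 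For $k=3$ the double star $S_{n,m}$ also has $n+m-1=(k-2)(m-1)+n$ edges and, having diameter $3$, misses $P_5\in\mathbb{T}_{3,2}$. If $n-m\le k-2$, let $G$ be any $(n,m)$-bigraph in which every vertex of $V$ has degree $k-1$ and every vertex of $U$ has degree at most $k-1$; then $e(G)=(k-1)m$ and $G$ misses $T^{*}$. For $k=3$ and $n=m+1$ the diameter-$3$ graph $S_{m+1,m}$, with $(k-1)m=2m$ edges, is an additional extremal graph.

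\textit{Upper bound.} Let $B=G[U,V;E]$ contain no tree of $\mathbb{T}_{k,2}$, with $d(v_1)\le\cdots\le d(v_m)$; by Proposition~\ref{prop} any two vertices in a common part, one of degree $\ge k$ and the other of degree $\ge\lceil k/2\rceil$, have disjoint neighbourhoods. If $d(v_m)\le k-1$, then $e(B)=\sum_i d(v_i)\le(k-1)m$, which is the stated bound in both ranges since $(k-1)m\le(k-2)(m-1)+n$ exactly when $n-m\ge k-2$; equality forces every $v_j$ to have degree $k-1$, and then one rules out a vertex of $U$ of degree $\ge k$ (run the disjointness inside $U$; since every neighbour of such a vertex already has degree $k-1\ge\lceil k/2\rceil$, a wasteful number of small-degree $U$-vertices is forced, incompatible with $e(B)=(k-1)m$ unless $k=3$, $n=m+1$ and $B=S_{m+1,m}$). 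If instead $d(v_m)\ge k$, put $H=\{v:d(v)\ge k\}$, $h=|H|\ge1$, $M=\{v:\lceil k/2\rceil\le d(v)\le k-1\}$, $S=V\setminus(H\cup M)$; then the sets $N(v)$ ($v\in H$) are pairwise disjoint and each avoids every $N(v')$ with $v'\in M$. Write $T=\sum_{v\in H}d(v)=\bigl|\bigcup_{v\in H}N(v)\bigr|$ and $W=U\setminus\bigcup_{v\in H}N(v)$, so $|W|=n-T$ and every edge meeting $M\cup S$ lies inside $W$. The subgraph $B'=B[W,M\cup S]$ again contains no tree of $\mathbb{T}_{k,2}$, so $e(B')$ is bounded both by the trivial $(k-1)|M\cup S|$ and, according to whether $|W|$ exceeds, equals, or is less than $|M\cup S|=m-h$, by Lemma~\ref{n,m}(1), Lemma~\ref{n,m}(2), or Lemma~\ref{main2,3} (with Lemma~\ref{main2,2} or a trivial estimate in the few degenerate sizes). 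Inserting $T=n-|W|$ into $e(B)=T+e(B')$ and using $m\ge k+1$, a short computation gives $e(B)\le(k-2)(m-1)+n$ when $n-m\ge k-1$ and $e(B)\le(k-1)m$ when $n-m\le k-2$; pushing the equality cases through Lemma~\ref{n,m}(2) and Lemma~\ref{main2,3} forces $h=1$, $S=\emptyset$, the unique vertex of $H$ adjacent to all of $W$, and $B'$ a $(k-1)$-regular bipartite graph with $|W|=|M|=m-1$, i.e. $B=B_{m-1,m-1}^{k-1}\cup K_{1,n-m+1}$, together with the $k=3$ exceptions $S_{n,m}$ and $S_{m+1,m}$.

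\textit{Main obstacle.} The substance is the extremal characterisation. First, since $m\ge k+1$, \textit{both} parts of $B_{n,m}$ can accommodate the $k$-side of a tree in $\mathbb{T}_{k,2}$, so the disjoint-neighbourhood analysis must be carried out in both partition classes (unlike Lemma~\ref{main2,3}, where $k\ge m$ confines the $k$-side to $U$), and coordinating the two sides at equality is delicate. Second, $k=3$ is a genuine exception: $\mathbb{T}_{3,2}$ consists only of $P_5$ and the chair $T^{*}$, and the diameter-$3$ double stars $S_{n,m}$, $S_{m+1,m}$ avoid $P_5$ while staying edge-maximal, so they must be extracted by hand. Keeping the residual-graph case split (Lemma~\ref{n,m} versus Lemma~\ref{main2,3}, and the boundary $n-m=k-2$ at which the two formulas agree) consistent with this equality analysis is where the bookkeeping accumulates.
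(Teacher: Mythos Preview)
Your strategy---separate the high-degree vertices $H\subseteq V$ via Proposition~\ref{prop}, then apply Lemma~\ref{n,m} (or Lemma~\ref{main2,3}) to the leftover bipartite graph---is exactly the paper's. But there is a genuine gap in the decomposition $e(B)=T+e(B')$.

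Proposition~\ref{prop} only forces $N(v)\cap N(v')=\emptyset$ when one of the two vertices has degree at least $k$ and the other has degree at least $\lceil k/2\rceil$. Thus the vertices of $M$ do send all their edges into $W$, but the vertices of $S$ (those with $d(v)\le\lceil k/2\rceil-1$) need not: an $S$-vertex can perfectly well have neighbours inside $\bigcup_{v\in H}N(v)$. Consequently your identity ``every edge meeting $M\cup S$ lies inside $W$'' is false in general, and $e(B)$ may strictly exceed $T+e(B')$. The ``short computation'' that follows therefore does not bound $e(B)$, and the equality analysis built on it collapses.

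The paper handles this by introducing the threshold
\[
s=\max\Bigl\{\,n-\sum_{v\in H}d(v),\ \bigl\lceil k/2\bigr\rceil-1\Bigr\}
\]
and splitting according to the size of $s$. When $s\ge k-1$ one is in your intended regime (only vertices of degree $\ge k-1$ in $V'$ matter and Lemma~\ref{n,m} applies to the graph on $W$), but when $s\le k-2$---precisely the situation where $S$-vertices can reach $U\setminus W$---the paper argues separately that every $v\in V'$ has $d(v)\le s$ and bounds $e(B)$ by $T+(m-h)s$ directly, extracting the $k=3$ double-star exceptions from this branch. Your outline needs either that additional case, or an explicit accounting of the edges from $S$ into $U\setminus W$, before the reduction to Lemma~\ref{n,m} becomes valid. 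The sketch of the $d(v_m)\le k-1$ equality case (ruling out $d(u_i)\ge k$ ``by disjointness inside $U$'') is also thinner than what is required: the paper's argument here again goes through a case split rather than a single counting line.
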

  \begin{proof} {\bf (1).} Suppose that $n-m\geq k-1$.
  Let $B_{n,m}=G[U,V;E]$ be an $(n,m)$-bipartite graph with $(k-2)(m-1)+n$ edges which does not contain all trees in $\mathbb{T}_{k,2}$ as subgraphs.  Then there exists at least one vertex in $V$ with degree at least $k$. Otherwise $$e(B_{n,m})\le (k-1)m= (k-2)(m-1)+n-[(n-m)-(k-1)]-1< (k-2)(m-1)+n,$$ which is a contradiction.
Let $U=\{u_{1},\ldots,u_{n}\}$ and $V=\{v_{1},\ldots,v_{m}\}$
with $d(u_{1})\leq\ldots\leq d(u_{n})$ and $d(v_{1})\leq\ldots\le d(v_{m-l})<k\le d(v_{m-l+1})\le \ldots\leq d(v_{m})$. Moreover,
let $$s=max\{n-\sum_{i=m-l+1}^{m}d(v_{i}),\lceil\frac{k}{2}\rceil-1\},$$ $$U^{\prime}=U\setminus \cup_{i=m-l+1}^m N(v_i) \mbox{ and }
 V^{\prime}=V\setminus \{v_{m-l+1},\ldots,v_{m}\}.$$
Furthermore, we have the following claim.

{\bf Claim:} $m-l\ge k$.

 In fact, suppose that $m-l\le k-1$. Then $l\ge m-k+1\ge 2$ by $m\ge k+1$.
 If $s\ge k-1$, then
 \begin{eqnarray*}e(B_{n,m})&\leq &\sum_{i=m-l+1}^{m}d(v_{i})+(m-l)(k-1)\leq n-(k-1)+(m-l)(k-1)\\&=&(m-1)(k-1)+n-l(k-1)\leq(m-1)(k-1)+n-l(m-2)\\&<&(k-2)(m-1)+n,\end{eqnarray*}
  which is a contradiction. If  $s\le k-2$,  then $d(v_{m-l})\le s$, otherwise $v_{m-l}$ and one vertex in $\{v_{m-l+1},\ldots,v_{m}\}$ have at least one common neighbor, which implies that $B_{n,m}$ contains all trees in $\mathbb{T}_{k,2}$ as subgraphs. Recall that $N(v_i)\cap N(v_j)=\emptyset$ for any $m-l+1\le i<j\le m$. Hence
  \begin{eqnarray*}e(B_{n,m})&\leq &\sum_{i=m-l+1}^{m}d(v_{i})+(m-l)s\leq n+(m-l)s\\&\leq& n+(k-2)(m-2)<(k-2)(m-1)+n,\end{eqnarray*}
   which is a contradiction.  Hence the claim  holds.

  Now we consider the following four cases.

  {\bf Case 1.1.} $s> m-l$. Then $s> m-l\ge k\ge \lceil\frac{k}{2}\rceil-1$ which implies $s=n-\sum_{i=m-l+1}^{m}d(v_{i})$. Hence  \begin{eqnarray*}e(B_{n,m})&\leq&\sum_{i=m-l+1}^{m}d(v_{i})+(m-l)(k-1)<
  n-(m-l)+(m-l)(k-1)\\&=&(k-2)(m-1)+n,\end{eqnarray*}
   which is a contradiction.

  {\bf Case 1.2.}  $s=m-l$. Then $s=m-l\ge k\ge \lceil\frac{k}{2}\rceil-1$ which implies $s=n-\sum_{i=m-l+1}^{m}d(v_{i})$. Hence
      $$(k-2)(m-1)+n=e(B_{n,m})\leq\sum_{i=m-l+1}^{m}d(v_{i})+(m-l)(k-1)=n+(m-l)(k-2).$$ So $l=1$ and $s=m-l=m-1$. Then $d(v_m)=n-(m-1)$ by $s=\max\{n-d(v_m), \lceil\frac{k}{2}\rceil-1\}$. By $$\begin{array}{lll}(k-2)(m-1)+n=e(B_{n,m})&\le& d(v_{m-1})(m-1)+n-(m-1)\\&\le & (k-1) (m-1)+n-(m-1),\end{array}$$ we have $d(v_1)=\ldots=d(v_{m-1})=k-1$.
       Hence $B_{m-1,m-1}$ with $U_{m-1}=U^{\prime}$ and $V_{m-1}=V^{\prime}=\{v_1, \ldots, v_{m-1}\}$ has $(k-1)(m-1)$ edges by Proposition \ref{prop}. Therefore by Lemma~\ref{n,m}, $B_{m-1,m-1}$ is $(k-1)$-regular bipartite graph. So $$B_{n,m}=B_{m-1,m-1}^{k-1}\cup K_{1,n-m+1},$$ where $B_{m-1,m-1}^{k-1}$ is a $(k-1)$-regular bipartite graph.

      {\bf Case 1.3.}  $k-1\leq s<m-l$.  Let $d(v_{1})\leq\ldots\le d(v_p)<k-1=d(v_{p+1})=\ldots=d(v_{m-l})<k\le d(v_{m-l+1})\le \ldots\leq d(v_{m})$, where $p=0$ means that the degree of all vertices in $V$ is at least $k-1$. By
      \begin{eqnarray*}(k-2)(m-1)+n&=&e(B_{n,m})\le p(k-2)+(m-l-p)(k-1)+\sum_{i=m-l+1}^md(v_i)\\&\leq &p(k-2)+(m-l-p)(k-1)+n-s,\end{eqnarray*} we have $m-l-p-s\ge (k-2)(l-1)\ge 0$. Then $s\le m-l-p$.
      %Furthermore, we claim $p=m-l-s$. In fact, if $p\le m-l-s-1$, i.e., $m-l-p\ge s+1$, then
      On the other hand, since $v_i$ and one vertex of $\{v_{m-l+1},\ldots,v_m\}$ have no common neighbor for $i=p+1, \ldots, m-l$,
      $N(v_i)\subseteq U\setminus \cup_{j=m-l+1}^nN(v_j)$. Hence
      the  $(s, m-l-p)$-bipartite graph $B_{s, m-l-p}=G[U_s, V_{m-l-p};E_{s,m-l-p}]$ with $
      U_s= U\setminus \cup_{j=m-l+1}^nN(v_j)$ and $V_{m-l-p}=\{v_{p+1}, \ldots, v_{m-l}\}$ has $(k-1)(m-l-p)$ edges and $s\ge k-1$. By   Lemma~\ref{n,m}, we have $p=m-l-s>0$ and $B_{s, m-l-p}$ is a $(k-1)$-regular bipartite graph, which contains all trees in $\mathbb{T}_{k,2}$ as subgraphs except the tree with one vertex with degree $k$. Moreover, we have $d(v_1)=\ldots=d(v_p)=k-2$, otherwise $$e(B_{n,m})<(k-2)p+s(k-1)+n-s=n+(k-2)(m-l)\leq n+(k-2)(m-1).$$ Since $k\geq 3$ and $p>0$, then either the neighbors of $v_1$ lie in $\cup_{j=m-l+1}^nN(v_j)$ or $U\setminus \cup_{j=m-l+1}^nN(v_j)$, hence $B_{n,m}$ must contain the tree with one vertex with degree $k$ as a subgraph (recall that $B_{s, m-l-p}$ is a $(k-1)$-regular bipartite graph). Hence, $B_{n,m}$ contains all trees in $\mathbb{T}_{k,2}$ as subgraphs, which is a contradiction.

{\bf Case 1.4.} $s\leq k-2$.  We claim that any vertex in $V^{\prime}$ with degree at most $s$. Otherwise, there must be a vertex in $V^{\prime}$ with degree more than $\lceil\frac{k}{2}\rceil-1 $ sharing at least one common neighbour of a vertex with degree more than $k-1$,  which contradicts Proposition~\ref{prop}. Hence
$$e(B_{n,m})\leq\sum_{i=m-l+1}^{m}d(v_{i})+(m-l)s\leq(k-2)(m-1)+n,$$ with equality holds if and only if $l=1$, $s=k-2$ and $d(v_m)=n$, $d(v_1)=\ldots=d(v_{m-1})=k-2$. If $k\geq4$, then $k-2\geq\lceil\frac{k}{2}\rceil$. Hence $B_{n,m}$ contains all trees in $\mathbb{T}_{k,2}$ as subgraphs. If $k=3$, it is easy to see that  $d(v_m)=n, d(v_1)=\ldots=d(v_{m-1})=1$ (recall the definition of ``strongly containing'', there are two kinds of possible embeddings considering the partition sets of $T_{k,l}$ and $B_{n,m}$ when $k\leq m$, and there are only two graphs in $\mathbb{T}_{3,2}$). Hence $B_{n,m}=S_{n,m}$.

{\bf (2).} Suppose that $n-m\leq k-2$.
  Let $B_{n,m}=G[U,V;E]$ be an $(n,m)$-bipartite graph with $(k-1)m$ edges which does not contain all trees in $\mathbb{T}_{k,2}$ as subgraphs, where $U=\{u_{1},\ldots,u_{n}\}$ and $V=\{v_{1},\ldots,v_{m}\}$. We consider the following two cases.

  {\bf Case 2.1.} The degree of every vertex in $V$ is at most $k-1$.
  Then by $(k-1)m=e(B_{n,m})$, the degree of every vertex in $V$ is $k-1$. Hence by $(k-1)m> m+(k-2)\ge n$, there exist two vertices in $V$ such that they have a common neighbor. So $B_{n,m}$ contains all trees in $\mathbb{T}_{k,2}$ as subgraphs
except the tree with one vertex with degree $k$. Hence $d(u_i)\le k-1$ for $i=1, \ldots, n$, otherwise $B_{n,m}$ contains all trees in $\mathbb{T}_{k,2}$ as subgraphs.

   {\bf Case 2.2.} There exists at least one vertex in $V$ with degree at least $k$. Let
$d(v_{1})\leq\ldots\le d(v_{m-l})<k\le d(v_{m-l+1})\le \ldots\leq d(v_{m})$ with $l\ge 1,$ and $$s=\max\{n-\sum_{i=m-l+1}^md(v_i), \lceil\frac{k}{2}\rceil-1\}.$$
 Then $$|\cup_{i=m-l+1}^mN(v_i)|=\sum_{i=m-l+1}^md(v_i),$$
  $$U^{\prime}=U\setminus \cup_{i=m-l+1}^m N(v_i) \mbox{ and }
 V^{\prime}=V\setminus \{v_{m-l+1},\ldots,v_{m}\}.$$Moreover, let $x_i=d(v_i)-(k-1)$ for $i=m-l+1,\ldots, m$ and $d(v_1)\le \ldots \le d(v_p)<k-1=d(v_{p+1})=\ldots=d(v_{m-l})$, where $p\ge 0.$

 {\bf Subcase 2.2.1.} $n-\sum_{i=m-l+1}^{m}d(v_{i})\geq k-1$. By $$\begin{array}{lll}(k-1)m=e(B_{n,m})&\le & \sum_{i=m-l+1}^{m}d(v_{i})+p(k-2)+(m-l-p)(k-1)\\&=&(k-1)m-p+ \sum_{i=m-l+1}^{m}x_i,\end{array}$$ we have $p\le \sum_{i=m-l+1}^{m}x_{i}$.
  Let  $$B_{n-\sum_{i=m-l+1}^{m}d(v_{i}), m-l-p}=G[U_{n-\sum_{i=m-l+1}^{m}d(v_{i})}, V_{m-l-p};E_{n-\sum_{i=m-l+1}^{m}d(v_{i}), m-l-p}]$$ be a bipartite graph with  $U_{n-\sum_{i=m-l+1}^{m}d(v_{i})}=U^{\prime}$ and $V_{m-l-p}=\{v_{p+1},\ldots,v_{m-l}\}$.
  Then by $p\le \sum_{i=m-l+1}^{m}x_{i}$ and $n-m\le k-2$, we have
  $$m-l-p\ge n-(k-2)-l-\sum_{i=m-l+1}^{m}x_{i}\ge n-(k-1)l-\sum_{i=m-l+1}^{m}x_{i}
  =n-\sum_{i=m-l+1}^{m}d(v_{i}).$$ Clearly the degree of every vertex in $V_{m-l-p}$ is $ k-1$ and $n-\sum_{i=m-l+1}^{m}d(v_{i})\ge k-1$.  Since $B_{n-\sum_{i=m-l+1}^{m}d(v_{i}), m-l-p}$ does not contain all trees in $\mathbb{T}_{k,2}$ as subgraphs, and
  $$e(B_{n-\sum_{i=m-l+1}^{m}d(v_{i}), m-l-p})=(k-1)(m-l-p),$$ by Lemma~\ref{n,m}
  we  have $$m-l-p=n-\sum_{i=m-l+1}^{m}d(v_{i}),  \mbox{ i.e., } \sum_{i=m-l+1}^{m}d(v_{i})=n-m+l+p$$ and $B_{n-\sum_{i=m-l+1}^{m}d(v_{i}), m-l-p}$ is a $(k-1)$-regular bipartite graph, which contains all trees in $\mathbb{T}_{k,2}$ as subgraphs except the tree with one vertex with degree $k$. Hence by $n-m\le k-2$, we have
  \begin{eqnarray*}(k-1)m=e(B_{n,m})&\leq&(k-2)p+(m-l-p)(k-1)+\sum_{i=m-l+1}^{m}d(v_{i})
  \\&=&(k-2)p+(m-l-p)(k-1)+n-m+l+p\\&\le& (k-1)m-(k-2)(l-1),\end{eqnarray*} which implies $l=1$ and $d(v_1)=\ldots d(v_{p})=k-2$. We claim that $p=0$, otherwise, similarly as case 1.3, $v_1$ and one vertex in $\{v_{p+1},\ldots,v_{m}\}$ have a common neighbor, which implies $B_{n,m}$ contains the tree with one vertex with degree $k$ as a subgraph, and hence contains all trees in $\mathbb{T}_{k,2}$ as subgraphs. Moreover, since $p=0,l=1$ and $B_{n-\sum_{i=m-l+1}^{m}d(v_{i}), m-l-p}=B_{m-1,m-1}$ is a $(k-1)$-regular bipartite graph, we get that $d(v_m)=n-m+1\le k-2+1$, which is a contradiction.

 {\bf Subcase 2.2.2.} $n-\sum_{i=m-l+1}^{m}d(v_{i})<k-1$. We claim that $d(v_{i})\leq s $ for $i=1,2,\ldots,m-l.$ Otherwise, there are one vertex in $V^{\prime}$ with degree more than $\lceil\frac{k}{2}\rceil-1$ and one vertex in $\{v_{m-l+1},\ldots, v_m\}$ have a common neighbor. Hence $B_{n,m}$ contains all trees in $\mathbb{T}_{k,2}$ as subgraphs.
  Furthermore, we have $s=\lceil\frac{k}{2}\rceil-1.$ Otherwise, $$s=n-\sum_{i=m-l+1}^{m}d(v_{i})<k-1 \mbox{ and } n-m\le k-2,$$ which implies
  \begin{eqnarray*}(k-1)m=e(B_{n,m})&\le& (m-l)s+\sum_{i=m-l+1}^{m}d(v_{i})=(m-l)s+n-s\\&\le &(m-l-1)s+m+k-2\\&\le &(m-l-1)(k-2)+m+k-2 \\&=&(k-1)m-l(k-2)\\&<&(k-1)m,\end{eqnarray*} a contradiction.  By $s=\lceil\frac{k}{2}\rceil-1, $
 we have  \begin{eqnarray*}(k-1)m&\le &(m-l)s+\sum_{i=m-l+1}^{m}d(v_{i})\\&\le &(m-l)(\lceil\frac{k}{2}\rceil-1)+n\\&\le & (m-l)(k-2)+m+k-2\\&=&(k-1)m-(l-1)(k-2).\end{eqnarray*}
  Then  $l=1$. Furthermore, by $(k-1)m\le  (m-1)(\lceil\frac{k}{2}\rceil-1)+m+k-2$, we have $(k-\lceil\frac{k}{2}\rceil-1)(m-1)\le 0$. Hence $k=3$. Moreover,  by $2m=(k-1)m\le (m-1)(\lceil\frac{3}{2}\rceil-1)+ n\le (k-1)m=2m$, we have $n=m+1$. Therefore $d(v_m)=n$ by $2m\le (m-1)+d(v_m)$ and $d(v_m)\le n=m+1$. Then it is easy to see that $d(v_{m-1})=\ldots=d(v_1)=1$. Since there are only two graphs in $\mathbb{T}_{3,2}$, by an easy observation, we have $d(u_{m+1})=m,d(u_{m})=\ldots=d(u_{1})=1$. The assertion holds.
 \end{proof}

 Now we are ready to present the proof of Theorem~\ref{main2}.

 \begin{proof}
Theorem~\ref{main2} follows from Lemmas~\ref{main2,1},~\ref{main2,2},~\ref{main2,3} and \ref{main2,4}.\end{proof}

\section{Proof of Theorem~\ref{main3}}
Since there are exactly three trees $G_{1}$,$G_{2}$ and $G_{3}$  (see Figure 1) in $\mathbb{T}_{3,3}$, we have the following result for  small $m$ and $n$.
\begin{lemma}\label{small}
Let $n\ge m$ and $1\leq m \leq 4$. Then
$$ex(n, m;\mathbb{T}_{3,3})=\left\{\begin{array}{ll} 9,& \mbox{for}\ \ m=n=4;\\
2n, &\mbox{for}\ \ 2\le m\le 4, (n,m)\neq (4,4);\\
n, &\mbox{for}\ \ m=1.
\end{array}\right.$$
Furthermore, (1). If $m\le 2$, all the extremal graphs for  $\mathbb{T}_{3,3}$ are $K_{n,m}$.

(2). If $m=3$ and $n=3$, then all the extremal graphs for $\mathbb{T}_{3,3}$ are $B_{3,3}=G[U,V;E]$ such that any vertex in $U$ or $V$ has degree two.

(3). If $m=3$ and $n\ge 4$, then all the extremal graphs for $\mathbb{T}_{3,3}$ are $B_{n,3}=G[U,V;E]$ such that any vertex in $U$ has degree two.

(4). If $m=4$ and $n=4$, then the extremal graph for $\mathbb{T}_{3,3}$ is $G_1^{\prime} $ (see Figure 2).

(5). If $m=4$ and $n=5$, then  all the extremal graphs for $\mathbb{T}_{3,3}$ are $B_{5,4}=G[U,V;E]$ such that any vertex in $U$ has degree two and $G_2^{\prime}$. (see Figure 2).

(6). If $m=4$ and $n\ge 6$, then all the extremal graphs for $\mathbb{T}_{3,3}$ are
$B_{n,4}=G[U,V;E]$ such that any vertex in $U$ has degree two.

\begin{picture}(200,70)(0,-40)
\put(20,24){\line(1,0){16}}
\put(20,24){\line(1,-1){16}}
\put(20,24){\line(1,-2){16}}
\put(20,8){\line(1,1){16}}
\put(20,8){\line(1,-2){16}}
\put(20,-8){\line(1,1){16}}
\put(20,-8){\line(1,-1){16}}
\put(20,-24){\line(1,1){16}}
\put(20,-24){\line(1,0){16}}

\put(20,8){\circle*{2}}
\put(36,8){\circle*{2}}
\put(20,-8){\circle*{2}}
\put(36,-8){\circle*{2}}
\put(20,-24){\circle*{2}}
\put(36,-24){\circle*{2}}
\put(20,24){\circle*{2}}
\put(36,24){\circle*{2}}
\put(28,-32){$G^{\prime}_{1}$}

\put(100,24){\line(1,0){12}}
\put(100,24){\line(1,-1){12}}
\put(100,24){\line(1,-2){12}}
\put(100,24){\line(1,-3){12}}
\put(100,12){\line(1,1){12}}
\put(100,12){\line(1,-3){12}}
\put(100,0){\line(1,1){12}}
\put(100,0){\line(1,-2){12}}
\put(100,-12){\line(1,1){12}}
\put(100,-12){\line(1,-1){12}}
\put(100,-24){\line(1,0){12}}
\put(100,-24){\line(1,1){12}}

\put(100,24){\circle*{2}}
\put(112,24){\circle*{2}}
\put(100,12){\circle*{2}}
\put(112,12){\circle*{2}}
\put(100,0){\circle*{2}}
\put(112,0){\circle*{2}}
\put(100,-12){\circle*{2}}
\put(112,-12){\circle*{2}}
\put(100,-24){\circle*{2}}
\put(112,-24){\circle*{2}}
\put(106,-32){$G^{\prime}_{3}$}

\put(60,24){\line(1,0){12}}
\put(60,24){\line(1,-1){12}}
\put(60,24){\line(1,-2){12}}
\put(60,12){\line(1,1){12}}
\put(60,12){\line(1,-2){12}}
\put(60,0){\line(1,1){12}}
\put(60,0){\line(1,-1){12}}
\put(60,-12){\line(1,1){12}}
\put(60,-12){\line(1,0){12}}
\put(60,-24){\line(1,1){12}}

\put(60,24){\circle*{2}}
\put(72,24){\circle*{2}}
\put(60,12){\circle*{2}}
\put(72,12){\circle*{2}}
\put(60,0){\circle*{2}}
\put(72,0){\circle*{2}}
\put(60,-12){\circle*{2}}
\put(72,-12){\circle*{2}}
\put(60,-24){\circle*{2}}
\put(66,-32){$G^{\prime}_{2}$}

\put(40,-40){Figure 2: Some extremal graphs}
\end{picture}
\end{lemma}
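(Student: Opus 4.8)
The plan is to treat $\mathbb{T}_{3,3}=\{G_1,G_2,G_3\}$ one tree at a time. The point is that a bipartite graph fails to contain \emph{all} of $G_1,G_2,G_3$ precisely when it avoids at least one of them, so comparing an optimal graph for $\mathbb{T}_{3,3}$ with the optimal graphs for the single trees yields $ex(n,m;\mathbb{T}_{3,3})=\max_{1\le i\le 3}ex(n,m;G_i)$, and a graph is extremal for $\mathbb{T}_{3,3}$ exactly when it is extremal for whichever $G_i$ attains this maximum. The trivial range is $m\le 2$: each $G_i$ is connected with both colour classes of size $3$, so an embedding forces its classes into the two classes of the host, and since $|V|=m\le 2$ there is no room; hence $B_{n,m}$ contains no tree of $\mathbb{T}_{3,3}$ at all, $ex(n,m;\mathbb{T}_{3,3})=e(K_{n,m})=nm$, with unique extremal graph $K_{n,m}$, which gives the values $n$ (for $m=1$) and $2n$ (for $m=2$).

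So assume $m\in\{3,4\}$ and compute the three one-tree extremal numbers. For $G_3=P_6=P_{2\cdot 3}$ this is available from Theorem~\ref{P2l} with $l=3$: $ex(n,m;P_6)=2n$ for $m\in\{3,4\}$, together with the list of $P_6$-free extremal graphs, all of which have every vertex of $U$ of degree at most $2$. For $G_1=S_{3,3}$ I would use the local description: $S_{3,3}\subseteq B$ iff $B$ has an edge between a vertex of $U$ of degree $\ge 3$ and a vertex of $V$ of degree $\ge 3$ (the two extra leaves on each centre are automatically present and automatically distinct, lying in opposite classes). Hence an $S_{3,3}$-free graph splits: setting $A=\{u\in U:d(u)\ge 3\}$ and $C=\{v\in V:d(v)\ge 3\}$, there are no edges between $A$ and $C$. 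If $A=\emptyset$ then $e(B)\le 2n$. If $A\ne\emptyset$: when $m=3$ each vertex of $A$ has $\ge 3=|V|$ neighbours, forcing $C=\emptyset$ and $e(B)\le 2m=6<2n$ for $n\ge 4$; when $m=4$ one gets $|C|\le 1$, and bounding the edges incident with $C$ and with the degree-$\le 2$ vertices shows $e(B)$ can exceed $2n$ only for $(n,m)=(4,4)$, the equality cases pinning down exactly $G_1^{\prime}$ for $(n,m)=(4,4)$ and exactly $G_2^{\prime}$ for $(n,m)=(5,4)$. Finally the spider $G_2$ is handled the same way: a vertex of degree $\ge 3$ fails to be a spider-centre only if at most one of its neighbours has a neighbour elsewhere, or all such second neighbours coincide, which is restrictive enough that no spider-free $B_{n,m}$ with $m\le 4$ has more than $2n$ edges, with extremal graphs again among the graphs having every $U$-vertex of degree $2$.

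Assembling the three: the maximum is $9$ when $m=n=4$, attained only by $G_1^{\prime}$ (the unique $S_{3,3}$-extremal graph there, the other two trees giving at most $8$); and it is $2n$ for $m=3$ with all $n\ge 3$ and for $m=4$ with $n\ge 5$, attained by the graphs with every $U$-vertex of degree $2$, plus the extra graph $G_2^{\prime}$ when $(n,m)=(5,4)$. Together with the $m\le 2$ values this is the full statement, and reading off the extremal family of whichever tree attains the maximum gives the characterisations (1)--(6).

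The step I expect to be the main obstacle is the degree/edge count for $S_{3,3}$ (and for the spider) in the boundary cases $m=4$, $n\in\{4,5\}$: this is exactly where the answer departs from $2n$ and where the sporadic graphs $G_1^{\prime}$ and $G_2^{\prime}$ appear, so the count must be run tightly enough both to certify the value $9$ for $B_{4,4}$ and to prove that $G_1^{\prime}$ and $G_2^{\prime}$ are the only extremal graphs besides the generic ones with all $U$-degrees equal to $2$.
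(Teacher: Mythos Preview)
Your framework is sound: the identity $ex(n,m;\mathbb{T}_{3,3})=\max_i ex(n,m;G_i)$ holds, and the extremal graphs for $\mathbb{T}_{3,3}$ are exactly the union, over those $i$ achieving the maximum, of the $G_i$-extremal graphs. Your computation of $ex(n,m;G_1)$ via the partition $A=\{u:d(u)\ge 3\}$, $C=\{v:d(v)\ge 3\}$ is the right tool and does single out $G_1'$ and $G_2'$ in the boundary cases. The one place that is genuinely thin is the spider $G_2$: to make the approach go through you must verify $ex(4,4;G_2)\le 8$ and $ex(5,4;G_2)\le 10$ directly (for $m=3$ and for $m=4,n\ge 6$ the bound $2n$ follows easily from your ``failed centre'' dichotomy, but the two small cases need the count run out in full).

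The paper takes a shortcut that spares exactly this work. Rather than compute the three functions $ex(n,m;G_i)$ separately, it observes that the \emph{same} structural hypothesis forces both $G_1$ and $G_2$ simultaneously: for $m=3$, a vertex of $U$ of degree three (together with $e\ge 2n$) already yields both $G_1$ and $G_2$; for $m=4$, an adjacent pair $u\in U$, $v\in V$ with $d(u),d(v)\ge 3$ (together with $e\ge 9$ or $e\ge 10$) yields both. Then Theorem~\ref{P2l} supplies $P_6$, and one concludes that any counterexample must violate the structural hypothesis, which immediately pins down the degree profile and hence the extremal graphs (the paper also invokes Theorem~\ref{main1} for $n=m=3$). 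So the paper never isolates $ex(n,m;G_2)$; it bundles $G_2$ with $G_1$.

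What each buys: your route is cleaner conceptually and makes the extremal characterisation transparent as a union, but it forces a standalone analysis of $G_2$-free graphs in the two sporadic cases. The paper's route is more ad hoc but shorter, because once one sees that the $G_1$-triggering condition also triggers $G_2$, the spider never needs its own bound.
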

\begin{proof} It is trivial for $m\le 2$. For $m=3$ and $n=3$, it follows from Theorem~\ref{main1}. For $m=3$ and $n\ge 4$, let $B_{n,3}=G[U,V;E]$ be any bipartite graph with $e(B_{n,3})\ge 2n$ which does not contain all trees in $\mathbb{T}_{3,3}$ as subgraphs. If there exists a vertex in $U$ with degree three, then it is easy to check that $B_{n,3}$ contains $G_{1},G_{2}$ in $\mathbb{T}_{3,3}$ as subgraphs. For $G_{3}=P_{6}$, by theorem~\ref{P2l}, $B_{n,3}$ contains $P_6$ as a subgraph. Hence, $B_{n,3}$ contains all trees in $\mathbb{T}_{3,3}$ as subgraphs, a contradiction. Since $B_{n,3}=G[U,V;E]$ in which degree of each vertex in $U$ is two does not contain $G_1$ as a subgraph, we conclude that the assertion holds for $m=3$ and $n\ge 4$. For $m=4$, and $n=4, 5$, let $B_{4,4}=G[U,V;E]$ be any bipartite graph with $e(B_{4,4})\ge9$ and $B_{5,4}$ be any bipartite graph with $e(B_{5,4})\ge10$ which does not contain all trees in $\mathbb{T}_{3,3}$ as subgraphs. If there are one vertex $u_{i}$ in $U$ and one vertex $v_{j}$ in $V$ with $d(u_{i})\ge 3,d(v_{j})\ge 3$ and $u_i$ hits $v_j$, then it is easy to see that $B_{4,4}$ and $B_{5,4}$ both contain $G_1$ and $G_2$ as subgraphs. Furthermore, by Theorem~\ref{P2l}, $B_{4,4}$ and $B_{5,4}$ contain $P_6$ as a subgraph. Hence $B_{4,4}$ and $B_{5,4}$ contains all trees in $\mathbb{T}_{3,3}$ as subgraphs. It is easy to see that the extremal graphs in (4) and (5) do not contain $G_1$ as a subgraph and they are the only possible extremal graphs ($G^\prime_1$, $G^\prime_2$ and $B_{5,4}=G[U,V;E]$ such that any vertex in $U$ has degree two.), hence the assertion holds. For $m=4$ and $n\ge 6$, let $B_{n,4}=G[U,V;E]$ be any bipartite graph with $e(B_{n,4})\ge2n$ which does not contain all trees in $\mathbb{T}_{3,3}$ as subgraphs. If there exists a vertex in $U$ with degree at least three, then it is easy to check that $B_{n,4}$ contains $G_{1},G_{2}$ in $\mathbb{T}_{3,3}$ as subgraphs. Moreover, by Theorem~\ref{P2l}, $B_{n,4}$ contains $P_6$ as a subgraph. So the assertion holds. \end{proof}

\begin{lemma}\label{4.1}
Let $n\ge m\ge 5$. Then  $$ex(n,m;G_{1})=2n+2m-8.$$
Furthermore, if $n=m=5$, all  extremal graphs for $G_1$ are $K_{2,3}\cup K_{2,3}$ and $G^{\prime}_{3}$.
If $m\ge 5, n\ge 6$, then  all  extremal graphs  for $G_1$ are $K_{2,n-2}\cup K_{2,m-2}$.
\end{lemma}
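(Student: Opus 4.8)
The plan is to convert containment of $G_1$ into a degree condition, prove matching bounds, and then read off the extremal graphs. First I would note that $G_1$ is precisely the double star $S_{3,3}$, and that a bipartite graph $B_{n,m}$ contains $S_{3,3}$ if and only if it has an edge $uv$ with $d(u)\ge 3$ and $d(v)\ge 3$: two further neighbours of $u$ lie in the opposite class from two further neighbours of $v$, so together with $u,v$ they are six distinct vertices spanning a copy of $S_{3,3}$; the converse is immediate. Hence, if $B_{n,m}=G[U,V;E]$ is $G_1$-free, then, setting $A\subseteq U$ and $B\subseteq V$ to be the sets of vertices of degree at least $3$ and $a=|A|$, $b=|B|$, there is no edge between $A$ and $B$, and in particular $d(u)\le m-b$ for $u\in A$ and $d(v)\le n-a$ for $v\in B$. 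For the lower bound I would simply check that $K_{2,n-2}\cup K_{2,m-2}$ (two degree-$(m-2)$ vertices in $U$, two degree-$(n-2)$ vertices in $V$) has $2n+2m-8$ edges and is $G_1$-free since its only vertices of degree at least $3$ are pairwise non-adjacent, and likewise for $G_{3}^{\prime}$ when $n=m=5$.

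For the upper bound, write $e(B_{n,m})=\sum_{u\in A}d(u)+\sum_{v\in B}d(v)+e(H)$ with $H=G[U\setminus A,V\setminus B]$, the three edge sets being disjoint because no edge joins $A$ and $B$. Summing $|N(v)\cap A|+|N(v)\cap(U\setminus A)|\le 2$ over $v\in V\setminus B$ gives $\sum_{u\in A}d(u)+e(H)\le 2(m-b)$, and symmetrically $\sum_{v\in B}d(v)+e(H)\le 2(n-a)$. I would then split into six cases according to whether $a,b\in\{0,1,\ge 2\}$. If one of $a,b$ is $0$, the graph has maximum degree at most $2$ on one side, so $e(B_{n,m})\le 2n\le 2n+2m-8$ for $m\ge 5$. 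If $a,b\ge 2$, then $e(B_{n,m})\le 2(m-b)+\sum_{v\in B}d(v)\le 2(m-b)+2(n-a)=2(n+m)-2(a+b)\le 2(n+m)-8$. The remaining cases are obtained by feeding $d(u^{*})\le m-b$ or $d(v^{*})\le n-a$ into the two displayed inequalities: for $(a,b)=(1,1)$ one gets $e(B_{n,m})\le d(u^{*})+2(n-1)\le m+2n-3\le 2(n+m)-8$ (using $m\ge 5$), while $(1,\ge 2)$ and $(\ge 2,1)$ give the strictly smaller bounds $m+2n-4$ and $2m+n-4$. So $ex(n,m;G_1)=2n+2m-8$.

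Finally I would trace the equality cases. In the case $a,b\ge 2$ equality forces $a+b=4$, hence $a=b=2$; then each vertex of $U\setminus A$ and $V\setminus B$ has degree exactly $2$, $e(H)=0$, and the degree-$(m-2)$ and degree-$(n-2)$ classes are complete to the opposite small class, i.e.\ $B_{n,m}=K_{2,n-2}\cup K_{2,m-2}$. The other cases fall strictly below $2(n+m)-8$ unless $(a,b)=(1,1)$ and $m=5$; there equality forces $d(u^{*})=m-1=4$, so every vertex of $V\setminus\{v^{*}\}$ has at most one neighbour in $U\setminus\{u^{*}\}$, giving $e(H)\le 4$, and combining with $d(v^{*})+e(H)=2(n-1)$ and $d(v^{*})\le n-1$ forces $n=5$, $d(v^{*})=4$, and $H$ a perfect matching — that is $B_{n,m}\cong G_{3}^{\prime}$. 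The main obstacle is not the global count (which is short) but this equality bookkeeping: pinning down that the $(1,1)$ near-extremal family collapses to the single graph $G_{3}^{\prime}$ and only when $n=m=5$, and being careful that, since $n\ge m$, the $(1,\ge 2)$ and $(\ge 2,1)$ cases are not symmetric and must be treated separately.
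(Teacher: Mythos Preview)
Your proof is correct and follows essentially the same approach as the paper: both rest on the observation that $G_1$-freeness forbids edges between the high-degree sets $A\subseteq U$ and $B\subseteq V$, and both derive the bound $e\le 2(n-|A|)+2(m-|B|)-e(H)$ by double-counting edges incident to the low-degree vertices. The paper packages this as the single inequality $2(|U_1|+|V_1|)+e(U_2,V_2)\le 8$ and then pins down $|U_1|=|V_1|=2$, whereas you run an explicit case split on $(a,b)$; these are the same argument organised differently.

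The one genuine difference is the treatment of $n=m=5$. The paper disposes of it with ``it is easy to see that the assertion holds'' and only gives the detailed argument for $n\ge 6$, where $|V_1|\ge 2$ can be forced directly. Your $(a,b)=(1,1)$ case handles this uniformly: equality in $e\le d(u^{*})+2(n-1)\le m+2n-3$ forces $m=5$, then $d(u^{*})=4$ caps $e(H)\le 4$, which together with $d(v^{*})+e(H)=2(n-1)$ and $d(v^{*})\le n-1$ forces $n=5$ and recovers $G_3'$ explicitly. This is cleaner than leaving it to the reader, and your warning that the $(1,\ge 2)$ and $(\ge 2,1)$ cases are not interchangeable (because the bounds $m+2n-4$ and $2m+n-4$ play against $2n+2m-8$ differently when $n>m$) is well taken, though in the end both are strictly below the target for $n\ge m\ge 5$.
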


\begin{proof}
 If $n=m=5$, it is easy to see that the assertion holds. Now assume that $m\ge 5$ and $n\ge 6$.  Let $B_{n,m}=G[U,V;E]$ be an $(n,m)$-bipartite graph with $e(B_{n,m})=2n+2m-8$ which does not contain $G_1$ as a subgraph. Let $U_{1}$ and $V_1$ be set of the vertices in $U$ and $V$ with degree more than two, respectively. Denote by $U_{2}=U\setminus U_1$ and $V_2=V\setminus V_1$. Since $B_{n,m}$ does not contain $G_1$ as a subgraph, any vertex in $U_1$ does not hit any vertex in $V_1$. So $$2n+2m-8=e(B_{n,m})\leq 2(n-|U_{1}|)+2(m-|V_{1}|)-e(U_{2},V_{2}).$$ Hence $$2(|U_1|+|V_1|)+e(U_{2},V_{2})\le 8.$$ On the other hand, we have $|U_1|\ge 1$, otherwise $e(B_{n,m})\le 2n$ which contradicts $e(B_{n,m})=2n+2m-8$ and $m\ge5$. Furthermore, we have $|V_1|\ge 2$, otherwise there is at most one vertex in $V$ with degree at most $n-1$ and $e(B_{n,m})\le n-1+2(m-1)$ which contradicts $e(B_{n,m})=2n+2m-8$.  Therefore $|U_1|=|V_1|=2$, otherwise by $|U_1|+|V_1|\le 4$, we have $|U_1|=1$ and $2\le |V_1|\le 3$ which implies $$e(B_{n,m})\le m-2+2(n-1)<2n+2m-8,$$ a contradiction.
 Moreover, $e(U_2, V_2)=0$. So each vertex  in $U_1$  ($V_1$) hits each vertex in $V_2$ ($U_2$), respectively. Hence $B_{n,m}=K_{2,n-2}\cup K_{2,m-2}$.
 \end{proof}

\begin{lemma}\label{4.2} Let $n\ge m\ge5$. If an $(n,m)$-bipartite graph $B_{n,m}=G[U,V;E]$ with $e(B_{n,m})\geq2n+2m-8$ is not an extremal graph in Lemma~\ref{4.1}, then $B_{n,m}$ contains $G_2$ as a subgraph.
\end{lemma}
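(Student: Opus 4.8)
The plan is to argue by contradiction. Suppose $B=B_{n,m}=G[U,V;E]$ is $G_2$-free, satisfies $e(B)\ge 2n+2m-8$, and is not an extremal graph for $G_1$ in Lemma~\ref{4.1}. Recall that $G_1=S_{3,3}$, so a bipartite graph contains $G_1$ exactly when it has an edge both of whose ends have degree at least $3$. Since $e(B)\ge ex(n,m;G_1)=2n+2m-8$ and $B$ is not an extremal graph for $G_1$, the definition of $ex(n,m;G_1)$ forces $G_1\subseteq B$; hence $B$ has an edge $uv$ with $u\in U$, $v\in V$ and $d(u),d(v)\ge 3$. It will suffice to derive $e(B)\le 2n+2m-9$.

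First I would record a local criterion for a copy of $G_2$. For a vertex $x$ with $d(x)\ge 3$ let $S_x=\{y\in N(x):d(y)\ge 2\}$ be the set of non-pendent neighbours of $x$. If $|S_x|\ge 2$ and $\bigl|\bigcup_{y\in S_x}(N(y)\setminus\{x\})\bigr|\ge 2$, then $B$ contains $G_2$ with $x$ as its degree-$3$ centre: one picks distinct $y_2,y_3\in S_x$ and $z_2\in N(y_2)\setminus\{x\}$, $z_3\in N(y_3)\setminus\{x\}$ with $z_2\ne z_3$, then a further neighbour $y_1$ of $x$ (possible as $d(x)\ge 3$), and $\{x,y_1,y_2,y_3,z_2,z_3\}$ supports an embedding of $G_2$. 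Applying this at $x=u$: since $v\in S_u$ and $N(v)\setminus\{u\}\subseteq\bigcup_{y\in S_u}(N(y)\setminus\{u\})$ has $d(v)-1\ge 2$ elements, $G_2$-freeness forces $|S_u|\le 1$, hence $S_u=\{v\}$, i.e. every neighbour of $u$ other than $v$ is pendent; by symmetry every neighbour of $v$ other than $u$ is pendent. Therefore the component $C$ of $B$ containing $uv$ is exactly the double star $S_{d(u),d(v)}$, so writing $c:=|V(C)|$ we get $c=d(u)+d(v)\ge 6$ and $e(C)=c-1$.

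Next I would bound every other component, via the claim that each component $D$ of a $G_2$-free bipartite graph satisfies $e(D)\le 2|V(D)|-2$. If $D$ contains $G_1$, the argument just given shows $D$ is a double star, so $e(D)=|V(D)|-1\le 2|V(D)|-2$. If $D$ does not contain $G_1$, then the set $W$ of vertices of $D$ of degree at least $3$ is independent, so $\sum_{x\in W}d(x)$ equals the number of edges between $W$ and $V(D)\setminus W$, which is at most $\sum_{y\notin W}d(y)\le 2(|V(D)|-|W|)$; combined with $\sum_{y\notin W}d(y)\le 2(|V(D)|-|W|)$ this yields $2e(D)\le 4(|V(D)|-|W|)$, i.e. $e(D)\le 2(|V(D)|-|W|)$, which is $\le 2|V(D)|-4$ once $|W|\ge 2$. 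The residual cases are easy: if $|W|=0$ then $D$ is a path or a cycle and $e(D)\le|V(D)|$; if $|W|=1$, say $W=\{x\}$, then a short analysis using $G_2$-freeness at $x$ shows $D$ is either a tree (a star, or a path with some extra pendent vertices at one end) or a $4$-cycle with some pendent vertices attached to one of its vertices, whence again $e(D)\le|V(D)|\le 2|V(D)|-2$ (here $|V(D)|\ge 4$).

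Finally the contradiction. If $B=C$ then $e(B)=c-1=n+m-1<2n+2m-8$ since $n+m\ge 10$. Otherwise $B-V(C)$ has components $D_1,\dots,D_t$ with $t\ge 1$, and as $\sum_i|V(D_i)|=n+m-c$ the claim gives
\[ e(B)=e(C)+\sum_{i=1}^t e(D_i)\le (c-1)+2(n+m-c)-2t=2n+2m-c-1-2t\le 2n+2m-9, \]
using $t\ge 1$ and $c\ge 6$. Either way $e(B)\le 2n+2m-9$, contradicting $e(B)\ge 2n+2m-8$; hence $B$ contains $G_2$. The main obstacle is the structural step in the second paragraph: finding the correct local condition for a vertex to be the centre of a copy of $G_2$ and deducing from it that a $G_2$-free bipartite graph containing $G_1$ must have a double-star component. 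The per-component edge bound of the third paragraph is routine but needs care with the low-degree and tiny components, and once both ingredients are in place the concluding count is immediate.
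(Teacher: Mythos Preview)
Your proof is correct. Both your argument and the paper's begin identically: Lemma~\ref{4.1} forces $G_1\subseteq B$, and then the local $G_2$-criterion shows that in a $G_2$-free bipartite graph, any component containing a copy of $G_1$ is a double star. From there the two arguments diverge. The paper collects \emph{all} such double-star components into a subgraph $B_{n_1,m_1}$ with $e(B_{n_1,m_1})=n_1+m_1-p$, notes that the remainder $B_{n_2,m_2}$ is $G_1$-free, and then runs a case analysis on $(n_2,m_2)$, invoking Lemma~\ref{4.1} and Lemma~\ref{small} to show $B_{n_2,m_2}$ has too many edges. You instead peel off a \emph{single} double-star component $C$ and replace the appeal to those lemmas by the per-component inequality $e(D)\le 2|V(D)|-2$, which you derive from the elementary observation that in a $G_1$-free component the set $W$ of vertices of degree at least $3$ is independent, whence $2e(D)\le 4(|V(D)|-|W|)$. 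Your route is more self-contained and avoids the size case-split; the paper's route has the virtue of reusing extremal numbers already on the shelf.

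One minor simplification: your separate treatment of the $|W|=1$ case is unnecessary. The inequality $e(D)\le 2(|V(D)|-|W|)$ already yields $e(D)\le 2|V(D)|-2$ as soon as $|W|\ge 1$, so only $|W|=0$ (paths and even cycles) needs individual attention, and there $e(D)\le|V(D)|\le 2|V(D)|-2$ holds for $|V(D)|\ge 2$ while the isolated-vertex case is trivial.
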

\begin{proof}
 Suppose that $B_{n,m}=G[U,V;E]$ with $e(B_{n,m})\geq2n+2m-8$ is not an extremal graph in Lemma~\ref{4.1} and does not contain $G_2$ as a subgraph.
 By Lemma~\ref{4.1}, $B_{n,m}$ contains $G_1$ as a subgraph. Hence there exists a vertex  $u$ in $U$ with degree at least three and  a vertex  $v$ in $V$ with degree at least three such that $u$ hits $v$. Since $B_{n,m}$ does not contain $G_2$ as a subgraph, all vertices in $N(u) \cup N(v)\setminus\{u,v\}$ are pendent vertices. Hence the induced subgraph by the vertex set $N(u)\cup N(v)$ is a component of $B_{n,m}$ with $d(u)+d(v)$ vertices and $d(u)+d(v)-1$ edges. Let $C_1,\ldots,C_p$ be the components of $B_{n,m}$ such that there are $u_i\in U\cap C_i$ and $v_i\in V\cap C_i$ with $u_i$ hits $v_i$ and $d(u_i)\geq 3$, $d(v_i)\geq 3$. Let $B_{n_1,m_1}=G[U_{n_1},V_{m_1}; E_{n_1,m_1}]$ be the union of all these  $p$ components with $|U_{n_1}|=n_1$ and $|V_{m_1}|=m_1.$ Then
 $e(B_{n_1, m_1})=n_1+m_1-p$. Furthermore any two vertices with degree at least three in $B_{n,m}-U_{n_1}\cup V_{m_1}=B_{n_2, m_2}$ are not adjacent, where $n_2=n-n_1$ and $m_2=m-m_1$. Hence $B_{n,m}-U_{n_1}\cup V_{m_1}$ does not contain $G_1$ as a subgraph. Without loss of generality, let $n_2\ge m_2$.  If $m_2 \ge 5$, then $$e(B_{n,m}-U_{n_1}\cup V_{m_1})\ge e(B_{n,m})-e(B_{n_1,m_1})
 \ge2n+2m-8-(n_1+m_1-p)>2n_2+2m_2-8.$$ If $m_2=4,n_2\ge 5$, then $$e(B_{n,m}-U_{n_1}\cup V_{m_1})\ge e(B_{n,m})-e(B_{n_1,m_1})
 \ge2n+2m-8-(n_1+m_1-p)>2n_2.$$ If $m_2=4,n_2=4$, then $$e(B_{n,m}-U_{n_1}\cup V_{m_1})\ge e(B_{n,m})-e(B_{n_1,m_1})
 \ge2n+2m-8-(n_1+m_1-p)>9.$$ If $m_2=3$, then $$e(B_{n,m}-U_{n_1}\cup V_{m_1})\ge e(B_{n,m})-e(B_{n_1,m_1})
 \ge2n+2m-8-(n_1+m_1-p)>2n_2.$$ Then by Lemmas~\ref{small} and \ref{4.1}, $B_{n_2, m_2}$ contains $G_1$ as a subgraph, which is a contradiction. If $m_2\le2$, then $$e(B_{n,m}-U_{n_1}\cup V_{m_1})\ge e(B_{n,m})-e(B_{n_1,m_1})
 \ge 2n+2m-8-(n_1+m_1-p)> m_2n_2,$$ a contradiction. So the assertion holds.
\end{proof}
 Now we are ready to prove Theorem~\ref{main3}.

 \begin{proof}
 If $3\leq m\le 4$, then the assertion holds from Lemma~\ref{small}. If $m\ge5$, by Lemmas~\ref{4.1}, \ref{4.2} and Theorem~\ref{P2l}, any bipartite graph $B_{n,m}$ with $e(B_{n,m})\geq2(n+m-4)$ edges contains $G_1$, $G_2$ and $G_3$ as subgraphs except the extremal graph in Lemma~\ref{4.1}. So we finish the proof. Moreover, the extremal graphs are described in Lemmas~\ref{small} and \ref{4.1}.
 \end{proof}

  {\bf Remark} Let $B^{k-1}_{m-l+1,m-l+1}$ be a $(k-1)$-regular bipartite graph with order $2m-2l+2$ and $B_{n,m;k-1}=G[U,V;E]$ be a bipartite graph with $d(u_{i})\leq k-1$, $d(v_{j})=k-1$ for $i=1,2,\ldots,n;j=1,2,\ldots,m,$ where $U=\{u_1,\ldots,u_n\}$, $ V=\{v_1,\ldots,v_j\}$. It is easy to see that
   $K_{n-l+1,l-1}\cup K_{l-1,m-l+1}$,  $K_{n-m+l-1,l-1}\cup B^{k-1}_{m-l+1,m-l+1}$
and  $B_{n,m;k-1}$  does not contain all trees in $\mathbb{T}_{k,l}$ as subgraphs.  Base on the proof of Theorems~\ref{main2} and \ref{main3}, we propose the following conjecture:
\begin{conjecture}
Let $n\geq m,k \geq l, n\geq k$ and $m\geq l.$

(1).
If $k<2l-2,m\geq 2l$, then
\[ex(n,m,\mathbb{T}_{k,l})=(l-1)(n+m-2l+2).\]

(2). If $k\geq2l-1,m-l+1\geq k-1,n-m+l-1\geq k$, then
\[ex(n,m,\mathbb{T}_{k,l})=(k-1)(m-l+1)+(l-1)(n-m+l-1).\]

(3). If $k\geq2l-1,m-l+1\geq k-1,n-m+l-1\leq k-1$, then
\[ex(n,m,\mathbb{T}_{k,l})=(k-1)m.\]
\end{conjecture}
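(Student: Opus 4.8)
The three graphs listed in the Remark already furnish the lower bounds: in each of the cases (1)--(3) the displayed construction has exactly the conjectured number of edges, so it suffices to check that it omits some member of $\mathbb{T}_{k,l}$. For $K_{n-l+1,l-1}\cup K_{l-1,m-l+1}$ one uses that every tree in $\mathbb{T}_{k,l}$ is connected on $k+l$ vertices whereas neither component can host both colour classes of such a tree; for $K_{n-m+l-1,l-1}\cup B^{k-1}_{m-l+1,m-l+1}$ and for $B_{n,m;k-1}$ one exhibits the single offending tree (a suitable broom containing a vertex of degree $k$, respectively a tree whose $l$-side meets a vertex of degree $k$), exactly as was done for $l=2$ in Lemmas~\ref{main2,3} and \ref{main2,4}. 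Thus the whole content of the conjecture is the matching upper bound together with the statement that these are the only extremal graphs.

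I would prove the upper bound by induction on $l$, the base case $l=2$ being Theorem~\ref{main2} — one first checks that, specialised to $l=2$, the three regimes of the conjecture coincide (up to boundary values) with the regimes $n-m\ge k-1$, $m\ge k$ and $n-m\le k-2$ appearing there. For the inductive step fix any $T_{k,l}=T[U',V';E']$ with $|U'|=k\ge l=|V'|$ and a bipartite graph $B_{n,m}=G[U,V;E]$ whose size exceeds the conjectured value. By Lemma~\ref{pendent vertex n,m} the tree has a pendent vertex $y_0\in U'$; let $x\in V'$ be its neighbour and $\{y_0,y_1,\dots,y_t\}=N_T(x)$. Mimicking the construction of Lemma~\ref{strongly,m,m}, delete $x$ and $y_0$, pick a vertex $x'\in V'$ lying in the component of $T-\{x,y_0\}$ that contains $y_1$, and re-attach the edges $x'y_2,\dots,x'y_t$; the result is a tree $T'_{k-1,l-1}\in\mathbb{T}_{k-1,l-1}$. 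On the graph side, after ordering $d(u_1)\le\cdots\le d(u_n)$ and $d(v_1)\le\cdots\le d(v_m)$, I would choose a vertex $v_m$ of (near-)maximum degree together with a carefully chosen $u_i$, set $B'_{n-1,m-1}=B_{n,m}-\{u_i,v_m\}$, verify that $e(B'_{n-1,m-1})$ still exceeds the level-$(l-1)$ threshold of the conjecture, invoke the induction hypothesis to embed $T'_{k-1,l-1}$ into $B'_{n-1,m-1}$ with the $(k-1)$-side mapped to $U$, and finally extend by $x\mapsto v_m$, $y_0\mapsto u_i$, routing $xy_1,\dots,xy_t$ through $v_m$ (legal because $d(v_m)$ is large; the residual case of small maximum degree is treated separately via the near-regularity arguments of Lemma~\ref{n,m}). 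When $l$ is comparable to $k$ one must run this for both orientations of the colour classes, i.e.\ work throughout with the ``strongly contains'' notion of Definition~\ref{strongly}.

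The degree bookkeeping is driven by a generalisation of Proposition~\ref{prop}: if two vertices of the same colour class of $B_{n,m}$ have degrees exceeding the thresholds determined by the relevant partition of $k$ forced by $T_{k,l}$ (the analogues of $\lceil k/2\rceil-1$ and $k-1$) and share a common neighbour, then $B_{n,m}$ already embeds every tree of $\mathbb{T}_{k,l}$. Partitioning $V$ into ``low'', ``medium'' ($=k-1$) and ``high'' ($\ge k$) degree classes and counting edges exactly as in Lemmas~\ref{main2,3} and \ref{main2,4} then forces, in each regime, either a contradiction with the assumed size or precisely one of the three configurations of the Remark; this simultaneously yields the upper bound and the characterisation of extremal graphs.

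The main obstacle is the case analysis, for two reasons. First, deleting $\{u_i,v_m\}$ can push the residual graph from one regime of the conjecture into another (for instance from (2) toward (3) when $n-m$ drops, or across the boundary $m-l+1=k-1$), so the induction has to be set up uniformly across all three parts at once, with the boundary values $k=2l-2$, $m=2l$ and $n-m=k-1$ handled by hand. Second, tightness requires that \emph{every} $T_{k,l}$ embed, and the awkward trees are exactly those resembling the constructions in the Remark — a double-broom, or a tree dominated by one high-degree vertex — so the re-attachment step must be arranged so that $T'_{k-1,l-1}$ is still ``generic enough'' to fall under the induction hypothesis rather than into its exceptional cases. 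Controlling this, rather than the raw inequalities, is where the real work lies.
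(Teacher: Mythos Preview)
This statement is labelled a \emph{Conjecture} in the paper, and the paper does not attempt to prove it. All the paper supplies is the Remark immediately preceding it: the three constructions $K_{n-l+1,l-1}\cup K_{l-1,m-l+1}$, $K_{n-m+l-1,l-1}\cup B^{k-1}_{m-l+1,m-l+1}$ and $B_{n,m;k-1}$ are noted to omit some tree of $\mathbb{T}_{k,l}$, which gives the lower bounds, and the authors state that the evidence of Theorems~\ref{main2} and~\ref{main3} motivates the conjectured upper bounds. There is therefore no ``paper's own proof'' to compare your proposal against; what you have written is a programme for attacking an open problem.

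On the substance of that programme: the induction on $l$ with base $l=2$ cannot cover part~(1), since $k<2l-2$ is vacuous at $l=2$; you would need Theorem~\ref{main3} (the case $k=l=3$) as a separate base, and even then the step from $(k,l)$ to $(k-1,l-1)$ does not preserve the regime boundaries --- for example $k<2l-2$ does not imply $k-1<2(l-1)-2$. More seriously, the engine of the $l=2$ argument is Proposition~\ref{prop}, which works because every tree in $\mathbb{T}_{k,2}$ is a double star and hence is determined by a single integer parameter; for general $l$ the family $\mathbb{T}_{k,l}$ is far richer, and there is no obvious pair of degree thresholds whose violation forces an embedding of \emph{every} such tree. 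Your sketch asserts a ``generalisation of Proposition~\ref{prop}'' but does not say what the thresholds are or why they suffice, and this is precisely the missing idea --- the counting arguments of Lemmas~\ref{main2,3} and~\ref{main2,4} lean on that proposition at every turn. You correctly identify the boundary-crossing and the ``awkward trees'' as obstacles, but as written the proposal is a plausible outline rather than a proof, and the paper itself does not claim otherwise.
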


\noindent{\bf Acknowledgements:}

  The authors would like to thank the anonymous referee for many helpful and constructive suggestions to an earlier version of this paper, in particular for giving new short proofs of Lemmas~\ref{strongly,m,m} and \ref{main1'}.


\begin{thebibliography}{1}
\bibitem{Ajtai1} Ajtai,~M., Koml\'{o}s,~J., Simonovits,~M., Szemer\'{e}di,~E.:  On the approximative solution of the Erd\H{o}s-S\'{o}s conjecture on trees. (manuscript)
\bibitem{Ajtai2} Ajtai,~M., Koml\'{o}s,~J., Simonovits,~M., Szemer\'{e}di,~E.:  Some elementary lemmas on the Erd\H{o}s-S\'{o}s conjecture for trees. (manuscript)
\bibitem{Ajtai3} Ajtai,~M., Koml\'{o}s,~J., Simonovits,~M., Szemer\'{e}di,~E.:  The solution of the Erd\H{o}s-S\'{o}s conjecture for large trees. (manuscript, in preparation)
\bibitem{balasubramanian2007erdos} Balasubramanian,~S., Dobson,~E.: On the Erd\H{o}s-S\'{o}s conjecture for
graphs with no $K_{2,s}.$  J. Graph Theory {\bf 56}, 301-310 (2007)

\bibitem{Balbuena2007} Balbuena,~C., Carc\'{i}a-V\'{a}zquez,~P., Marcote,~X., Valenzuela,~J.~C.: Counterexample to a conjecture of Gy\H{o}ri on $C_{2l}$-free bipartite graphs. Discrete Math. {\bf 307}, 748-749 (2007)

\bibitem{brandt1996erdHos} Brandt,~S., Dobson,~E.: The Erd\H{o}s-S\'{o}s conjecture for graphs of girth
5. Discrete Math. {\bf 150}(1), 411-414 (1996)

\bibitem{chvatal1972} Chv\'{a}tal,~V.: On Hamilton's ideals. J. Combin. Theory Ser. B {\bf 12}, 163-168 (1972)

\bibitem{dobson2002constructing} Dobson,~E.: Constructing trees in graphs whose complement has no
$K_{2,s}.$ Combin. Probab. Comput. {\bf 11}(04), 343-347 (2002)

\bibitem{eaton2010erdos} Eaton,~N., Tiner,~G.: On the Erd\H{o}s-S\'{o}s Conjecture and graphs with
large minimum degree. Ars Combin. {\bf 95}, 373-382 (2010)

\bibitem{erdHos1959maximal} Erd\H{o}s,~P, Gallai,~T.: On maximal paths and circuits of graphs.
Acta Math. Hungar.  {\bf 10}(3), 337-356 (1959)

\bibitem{erdos1965} Erd\H{o}s,~P.: Some problems in graph theory. Theory of Graphs and Its Applications. M. Fiedler, Editor, Academic Press, New York, pp. 29-36 (1965)

\bibitem{fan2013erdHos} Fan,~G.: The Erd\H{o}s-S\'{o}s Conjecture for spiders of large size. Discrete
Math. {\bf 313}(22), 2513-2517 (2013)

\bibitem{Furedi2006} F\"{u}redi,~Z., Naor,~A., Verstra\"{e}te,~J.: On the Turan number for the hexagon. Adv. Math. {\bf 203}, 472-496 (2006)
\bibitem{Furedi2013} F\"{u}redi,~Z., Simonovits,~M.:
The history of degenerate (bipartite) extremal graph problems. Erd\H{o}s centennial, 169-264, Bolyai Soc. Math. Stud., {\bf 25},  J\'{a}nos Bolyai Math. Soc., Budapest, (2013)

\bibitem{gyarfas1984} Gy\'{a}rf\'{a}s,~A., Rousseau,C.~C., Schelp,~R.~H.: An Extremal Problem for Paths in Bipartitre Graphs. J. Graphs Theory {\bf 8}, 83-95 (1984)

\bibitem{gyori1995} Gy\H{o}ri,~E.: $C_6$-free bipartite graphs and product representation of squares. Discrete Math. {\bf 165/166}, 371-375 (1997)


\bibitem{kovari1954} K\H{o}v\'{a}ri,~T., S\'{o}s,~V.~T., Tur\'{a}n,~P.: On a problem of K. Zarankiewicz. Colloq.
Math. {\bf 3}, 50-57 (1954)

\bibitem{mclennan2005erdHos} McLennan,~A.: The Erd\H{o}s-S\'{o}s Conjecture for trees of diameter four. J. Graph Theory {\bf 49}(4), 291-301 (2005)

\bibitem{MM1962} Moon,~J.~W., Moser,~L.: On Hamiltonian bipartite graphs. Israel J. Math. {\bf 1}, 163-165 (1963)

\bibitem{sacle1997erdHos} Sacl\'{e},~J.~F., Wo\'{z}niak~M.: The Erd\H{o}s-S\'{o}s Conjecture for
graphs without $C_4.$ J. Combin. Theory Ser. B {\bf 70}(2), 367-372 (1997)

\bibitem{Sarkozy1995} S\'{a}rk\"{o}zy,~G.~N.: Cycles in bipartite graphs and an application in number theory. J. Graph Theory {\bf 19} 323-331 (1995)
\bibitem{sidorenko1989asymptotic} Sidorenko,~A.~F.: Asymptotic solution for a new class of forbiddenrgraphs. Combinatorica {\bf 9}(2), 207-215 (1989)

\bibitem{tiner2010erdos} Tiner,~G.: On the Erd\H{o}s-S\'{o}s Conjecture for graphs on $n=k+3$ vertices. Ars Comb.  {\bf 95}, 143-150 (2010)

\bibitem{wozniak1996erdos} Wo\'{z}niak,~M.: On the Erd\H{o}s-S\'{o}s Conjecture. J. Graph
Theory {\bf21}(2), 229-234 (1996)

\bibitem{Yuan2016erdHos} Yuan,~L.-T., Zhang,~X.-D.: On the Erd\H{o}s-S\'{o}s Conjecture for graphs on $n=k+4$ vertices.  Ars Math. Contemp. 13, 49-61 (2017)

\bibitem{Zhou1984erdHos} Zhou,~B.: A note on the Erd\H{o}s-S\'{o}s Conjecture. Acta Math. Sci. {\bf4}, 287-289 (1984)



\end{thebibliography}
\end{document}